\documentclass[11pt,twoside,a4paper]{article}
\usepackage{amssymb}
\usepackage{amsmath}
\usepackage{amsthm}
\usepackage{color}
\usepackage{mathrsfs}

\usepackage{amsfonts}
\usepackage{amssymb}
\usepackage{amsmath}
\usepackage{amsthm}
\usepackage{bbm}
\usepackage{verbatim}
\usepackage{url}

\allowdisplaybreaks

\pagestyle{myheadings}\markboth{R, Gong, J. Li, E. Pozzi, M. N. Vempati}
{Boundedness and compactness of commutators}

\textwidth =158mm
\textheight =230mm
\oddsidemargin 2mm
\evensidemargin 2mm
\headheight=13pt
\setlength{\topmargin}{-0.6cm}

\parindent=13pt

\def\fz{\infty}

\def\supp{{\mathop\mathrm{\,supp\,}}}

\def\ls{\lesssim}
\def\gs{\gtrsim}

\def\dint{\displaystyle\int}

\def\dfrac{\displaystyle\frac}

\def\r{\right}
\def\lf{\left}

\newtheorem{thm}{Theorem}[section]
\newtheorem{lem}[thm]{Lemma}
\newtheorem{defn}[thm]{Definition}

\newcommand{\intav}{-\!\!\!\!\!\!\int}

\numberwithin{equation}{section}
\begin{document}
\arraycolsep=1pt

\title{\large\bf  A NOTE ON COMMUTATORS ON WEIGHTED MORREY SPACES \\ ON SPACES OF HOMOGENEOUS TYPE}
\author{Ruming Gong, Ji Li, Elodie Pozzi and Manasa N. Vempati}

\date{}
\maketitle

\begin{center}
\begin{minipage}{13.5cm}\small

{\noindent  {\bf Abstract:}\
In this paper we study the boundedness and compactness characterizations of the commutator of  Calder\'{o}n--Zygmund operators $T$  on spaces of homogeneous type $(X,d,\mu)$ in the sense of Coifman and Weiss. More precisely,  We show that the commutator $[b, T]$  is bounded on weighted Morrey space $L_{\omega}^{p,\kappa}(X)$ ($\kappa\in(0,1), \omega\in A_{p}(X), 1<p<\infty$)  if {\color{black}and only if}\ $b$ is in the BMO space. Moreover, the commutator $[b, T]$  is compact on weighted Morrey space $L_{\omega}^{p,\kappa}(X)$ ($\kappa\in(0,1), \omega\in A_{p}(X), 1<p<\infty$)  if {\color{black}and only if}\ $b$ is in the VMO space.
}

\end{minipage}
\end{center}

\bigskip

{ {\it Keywords}: commutator, compact operator, BMO space, VMO space, weighted Morrey space, space of homogeneous type}

\medskip

{{Mathematics Subject Classification 2010:} {42B20, 43A80}}

\section{Introduction}
\setcounter{equation}{0}

\smallskip

It is well-known that the boundedness and compactness of Calder\'on--Zygmund operator commutators on certain function spaces and their characterizations play an important role in various area, such as harmonic analysis, complex analysis, (nonlinear) PDE, etc. See for example \cite{CRW,CLMS,B,HLW, HPW, DLLW,Hyt, GLW, LOR, LOR2, ATC} and the references therein.
Recently,  equivalent characterizations
of the boundedness and the compactness of commutators were further extended to
Morrey spaces over the Euclidean space  by Di Fazio and Ragusa \cite{DiFazioRagusa91BUMIA} and
Chen et al.\,\cite{CDW12CJM},  and to weighted Morrey spaces by Komori and Shirai \cite{KS} for Calder\'on--Zygmund operator commutators and by Tao, Da. Yang and Do. Yang \,\cite{TYY,TYY2} for the Cauchy integral and Buerling-Ahlfors transformation commutator, respectively. For more results on the boundedness of operators on Morrey spaces in different settings, we refer the reader to other studies \cite{AX,FLY,MWY,FGPW}.

 Thus, along this literature,  it is natural to study the boundedness and compactness of Calder\'on--Zygmund operator commutators on weighted Morrey spaces in a more general setting:  spaces of homogeneous type in the sense of Coifman and Weiss \cite{cw77},
as Yves Meyer remarked in his preface to \cite{DH}, \textquotedblleft One is
amazed by the dramatic changes that occurred in analysis during the
twentieth century. In the 1930s complex methods and Fourier series played a
seminal role. After many improvements, mostly achieved by the Calder\'on--Zygmund school, the action takes place today on
spaces of homogeneous type. No group structure is available, the Fourier
transform is missing, but a version of harmonic analysis is still present.
Indeed the geometry is conducting the analysis.\textquotedblright

We say
that $(X,d,\mu)$ is a {space of homogeneous type} in the
sense of Coifman and Weiss if $d$ is a quasi-metric on~$X$
and $\mu$ is a nonzero measure satisfying the doubling
condition. A \emph{quasi-metric}~$d$ on a set~$X$ is a
function $d: X\times X\longrightarrow[0,\infty)$ satisfying
(i) $d(x,y) = d(y,x) \geq 0$ for all $x$, $y\in X$; (ii)
$d(x,y) = 0$ if and only if $x = y$; and (iii) the
\emph{quasi-triangle inequality}: there is a constant $A_0\in
[1,\infty)$ such that for all $x$, $y$, $z\in X$, 
\begin{eqnarray}\label{eqn:quasitriangleineq}
    d(x,y)
    \leq A_0 [d(x,z) + d(z,y)].
\end{eqnarray}
We say that a nonzero measure $\mu$ satisfies the
\emph{doubling condition} if there is a constant $C_\mu$ such
that for all $x\in X$ and $r > 0$,
\begin{eqnarray}\label{doubling condition}
   \mu(B(x,2r))
   \leq C_\mu \mu(B(x,r))
   < \infty,
\end{eqnarray}
where $B(x,r)$ is the quasi-metric ball by $B(x,r) := \{y\in X: d(x,y)
< r\}$ for $x\in X$ and $r > 0$.
We point out that the doubling condition (\ref{doubling
condition}) implies that there exists a positive constant
$n$ (the \emph{upper dimension} of~$\mu$)  such
that for all $x\in X$, $\lambda\geq 1$ and $r > 0$,
\begin{eqnarray}\label{upper dimension}
    \mu(B(x, \lambda r))
    \leq  C_\mu\lambda^{n} \mu(B(x,r)).
\end{eqnarray}
Throughout this paper we assume that $\mu(X)=\infty$ and that $\mu(\{x_0\})=0$ for every $x_0\in X$.

We now recall the definition of Calder\'{o}n--Zygmund operators on
spaces of homogeneous type.

\begin{defn}
\label{def 1} We say that $T$ is a Calder\'{o}n--Zygmund operator on $%
(X,d,\mu )$ if $T$ is bounded on $L^{2}(X)$ and has an associated kernel $%
K(x,y)$ such that $T(f)(x)=\int_{X}K(x,y)f(y)d\mu (y)$ for any $x\not\in
\mathrm{supp}\,f$, and $K(x,y)$ satisfies the following estimates: for all $%
x\not=y$,
\begin{equation}
|K(x,y)|\leq {\frac{{C}}{{V(x,y)}}},  \label{size of C-Z-S-I-O}
\end{equation}%
and for $d(x,x^{\prime })\leq (2A_{0})^{-1}d(x,y)$,
\begin{equation}
|K(x,y)-K(x^{\prime },y)|+|K(y,x)- K(y,x^{\prime })|\leq {\frac{C}{V(x,y)}}%
\beta \left( {\frac{d(x,x^{\prime })}{d(x,y)}}\right) ,
\label{smooth of C-Z-S-I-O}
\end{equation}%
where $V(x,y)=\mu (B(x,d(x,y)))$, $\beta :[0,1]\rightarrow \lbrack 0,\infty
)$ is continuous, increasing, subadditive, and $\omega (0)=0$. Throughout this paper we assume that $\beta(t)= t^{\sigma_0}$, for some $\sigma_0>0$.
\end{defn}
Note that by the doubling condition we have that $V(x,y)\approx V(y,x)$.  From \cite{Dgk} we assume for any Calder\'{o}n--Zygmund operator $T$ as in Definition \ref{def 1} with $\beta(t)\rightarrow 0$ as $t\rightarrow 0$, the following ``non-degenerate" condition holds:

There exists positive constant $c_o$ and $\bar{A}$
such that for every $x\in X$ and $r>0$, there exists $y\in B(x,\bar{A}r)\setminus B(x,r)$, satisfying

\begin{equation}\label{lowerbound}
    |K(x,y)|\geq \frac{1}{c_0\mu(B(x,r))}.
\end{equation}

This condition gives a lower bound on the kernel and in $\mathbb{R}^n$ this ``non degenerate" condition was first proposed in \cite{Hyt}.
On stratified Lie groups, a similar condition of the Riesz transform kernel  lower bound  was verified in \cite{DLLW}.

Let $T$ be a Calder\'on--Zygmund operator on $X$.
Suppose $b\in L^1_{\rm loc}(X)$ and $f\in L^p(X)$. Let $[b, T]$ be the commutator defined by
\begin{equation*}
[b, T]f(x):= b(x)T( f)(x)-T(bf)(x).
\end{equation*}

Let $p\in(1,\infty),\kappa\in(0,1)$ and $\omega\in A_p(X)$. The \emph{weighted Morrey space} $L_{\omega}^{p,\kappa}(X)$ is defined by
\begin{equation*}
   L_{\omega}^{p,\kappa}(X) := \{f\in L_{loc}^{p}(X):\|f\|_{L_{\omega}^{p,\kappa}(X)} <\infty\}
\end{equation*}
Here
\begin{equation*}
    \|f\|_{L_{\omega}^{p,\kappa}(X)} := \sup_{B}\left\{\frac{1}{\omega(B)^\kappa}\int_{B}|f(x)|^p \omega(x)d\mu(x)\right\}^\frac{1}{p}.
\end{equation*}

Our  main results are the following theorems.

\begin{thm}\label{thm main1}
Let $p\in(1,\infty), \kappa\in(0,1)$ and $ \omega\in A_{p}(X)$. Suppose $b\in L^1_{\rm loc}(X)$ and that $T$ is a Calder\'on--Zygmund operator as in Definition \ref{def 1} and satisfies the non-degenerate condition \eqref{lowerbound}. Then the commutator $[b,T]$  has the following boundedness characterization:
\begin{enumerate}
    \item[\rm (i)] If $b\in BMO(X)$, then $[b,T]$ is bounded on
    $L_{\omega}^{p,\kappa}(X)$.
    \item[\rm(ii)] If b is real valued and $[b,T]$ is bounded on $L_{\omega}^{p,\kappa}(X)$, then $b\in BMO(X)$.
\end{enumerate}
\end{thm}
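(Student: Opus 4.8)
The plan is to treat the two directions separately, leaning on known weighted estimates for $T$ on spaces of homogeneous type and on the classical $A_p$ machinery.

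For part (i), the goal is to deduce boundedness of $[b,T]$ on $L_\omega^{p,\kappa}(X)$ from $b\in BMO(X)$. I would start from the known fact that for $\omega\in A_p(X)$ and $b\in BMO(X)$ the commutator $[b,T]$ is bounded on the weighted Lebesgue space $L^p_\omega(X)$, with operator norm controlled by $\Norm{b}{BMO(X)}$; this is the Coifman--Rochberg--Weiss type theorem in the space of homogeneous type setting. To pass to the Morrey norm, fix a ball $B$ and split $f=f_1+f_2$ with $f_1=f\chi_{\lambda B}$ for a suitable dilate $\lambda B$ (with $\lambda$ depending only on $A_0$) and $f_2=f\chi_{X\setminus\lambda B}$. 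For the local part, the weighted $L^p_\omega$ bound gives $\int_B|[b,T]f_1|^p\omega\,d\mu\le C\Norm{b}{BMO}^p\int_{\lambda B}|f|^p\omega\,d\mu\le C\Norm{b}{BMO}^p\,\omega(\lambda B)^\kappa\Norm{f}{L_\omega^{p,\kappa}}^p$, and the doubling property together with $\omega\in A_p$ (hence $\omega$ doubling) gives $\omega(\lambda B)\approx\omega(B)$. For the global part $f_2$, use the kernel size and smoothness estimates \eqref{size of C-Z-S-I-O}--\eqref{smooth of C-Z-S-I-O}: for $x\in B$, $[b,T]f_2(x)=\int_{X\setminus\lambda B}(b(x)-b(y))K(x,y)f(y)\,d\mu(y)$, and the standard annular decomposition $X\setminus\lambda B=\bigcup_{j\ge1}(2^{j+1}\lambda B\setminus 2^j\lambda B)$, combined with the $BMO$ oscillation bound $|b(x)-b_{2^j\lambda B}|+|b_{2^j\lambda B}-b(y)|\lesssim j\Norm{b}{BMO}$ and the $A_p$ reverse-Hölder/weighted estimates for averages, yields a geometrically summable series controlled by $\Norm{b}{BMO}\Norm{f}{L_\omega^{p,\kappa}}\omega(B)^{\kappa/p}$. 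This step is essentially the weighted Morrey adaptation of the Di Fazio--Ragusa / Komori--Shirai argument, now in the homogeneous setting.

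For part (ii), the goal is the converse: if $b$ is real-valued and $[b,T]$ is bounded on $L_\omega^{p,\kappa}(X)$, then $b\in BMO(X)$. Here I would use the non-degeneracy condition \eqref{lowerbound}. The standard strategy (going back to the Euclidean argument of Hyt\"onen and its stratified-group analogue in \cite{DLLW}) is: fix a ball $B=B(x_0,r)$; by \eqref{lowerbound} there is a ``companion'' ball $\tilde B=B(y_0,r)$ at controlled distance with $y_0\in B(x_0,\bar A r)\setminus B(x_0,r)$ on which $K(x,y)$ does not change sign and satisfies $|K(x,y)|\gtrsim 1/\mu(B)$ for $x\in B$, $y\in\tilde B$. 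Then for an appropriate choice of test function supported on $\tilde B$ — for instance $f=\mathrm{sgn}(b-b_{\tilde B})\chi_{\tilde B}$ adjusted so that the kernel sign is absorbed — one estimates $\frac1{\mu(B)}\int_B|b(x)-c|\,d\mu(x)$ (with $c=b_{\tilde B}$ or a comparable constant) from below by a multiple of $\frac1{\mu(B)}|\int_{\tilde B}(b(x)-b(y))K(x,y)f(y)\,d\mu(y)|$ averaged over $x\in B$, i.e. by $[b,T]f$. One then bounds the Morrey norm of this test function and of $[b,T]f$ restricted to $B$ from above, and the boundedness hypothesis converts into $\frac1{\mu(B)}\int_B|b-b_{\tilde B}|\,d\mu\lesssim \Norm{[b,T]}{}$, uniformly in $B$; a routine argument passing from $b_{\tilde B}$ to $b_B$ (using that $B$ and $\tilde B$ are comparable balls) then gives $b\in BMO(X)$. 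The weighted Morrey norms of $\chi_{\tilde B}$ and of the localized output are handled with $A_p$ and doubling, exactly as in the unweighted and Lebesgue-weighted cases, with the extra factors $\omega(B)^{-\kappa/p}$ cancelling between numerator and denominator.

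The main obstacle I expect is the lower bound step in part (ii): making the test-function argument work cleanly in the quasi-metric setting with only the qualitative non-degeneracy \eqref{lowerbound}, rather than an explicit kernel. One must carefully choose the dilation constants so that the ball $\tilde B$ produced by \eqref{lowerbound} is disjoint from (or has controlled overlap with) $B$, verify that the kernel's oscillation estimate \eqref{smooth of C-Z-S-I-O} is negligible compared to its size on $B\times\tilde B$ (so that the sign of $K$ is genuinely constant there, possibly after shrinking $r$ by a fixed factor), and track how the quasi-triangle constant $A_0$ enters the comparability $\omega(B)\approx\omega(\tilde B)$. The Morrey weight $\omega(B)^{\kappa}$ itself is not a serious difficulty — it scales correctly and cancels — but bookkeeping the constants so that the estimate is truly uniform over all balls is where the care is needed.
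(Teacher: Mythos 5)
Your part (i) follows essentially the paper's route (local/global splitting, the known weighted $L^p_\omega$ bound for $[b,T]$ on the local piece, kernel size plus BMO oscillation on the tail), but one step you treat as automatic is not: in a general space of homogeneous type there is no reverse doubling, so annuli $2^{j+1}\lambda B\setminus 2^{j}\lambda B$ may be empty and $\mu(2^{j}B)$ can stay essentially constant over long ranges of $j$; consequently the series you call ``geometrically summable,'' which after the standard estimates reduces to $\sum_{j}\bigl(\omega(B)/\omega(2^{j}\lambda B)\bigr)^{(1-\kappa)/p}$, need not converge. The paper closes this by passing to a subsequence of dilates $2^{j_k}B$ with $\mu(2^{j_{k+1}}B)\ge 2\,\mu(2^{j_k}B)$ and invoking Lemma \ref{lemcomparison} to get decay $2^{-k\sigma(1-\kappa)/p}$; your sketch needs this regrouping (or an equivalent device) to be complete.

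The genuine gap is in part (ii). Taking $f=\mathrm{sgn}(b-b_{\tilde B})\chi_{\tilde B}$ does not give the pointwise lower bound you want, even after the non-degeneracy \eqref{lowerbound} is upgraded (Lemma \ref{prop homogeneous}) to a companion ball $\tilde B$ on which $K$ has constant sign and $|K(x,y)|\gtrsim 1/\mu(B)$: the integrand $(b(x)-b(y))\,K(x,y)\,\mathrm{sgn}(b(y)-b_{\tilde B})$ still changes sign in $y$, so $|[b,T]f(x)|$ does not dominate $\int_{\tilde B}|b(x)-b(y)|\,|K(x,y)|\,d\mu(y)$, and splitting $b(x)-b(y)=(b(x)-b_{\tilde B})+(b_{\tilde B}-b(y))$ leaves the uncontrolled term $(b(x)-b_{\tilde B})\int_{\tilde B}K(x,y)f(y)\,d\mu(y)$. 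The ``adjustment'' you gesture at is precisely the missing idea and is the core of the paper's argument: replace $b_{\tilde B}$ by the median $\alpha_{\tilde B}(b)$, split $B=E_1\cup E_2$ according to $b(x)\gtrless\alpha_{\tilde B}(b)$, and use \eqref{greater}--\eqref{lesser} to pick $F_1,F_2\subset\tilde B$ with $\mu(F_i)=\tfrac12\mu(\tilde B)$ so that on $E_j\times F_j$ the difference $b(x)-b(y)$ has constant sign and $|b(x)-b(y)|\ge|b(x)-\alpha_{\tilde B}(b)|$; one then tests $[b,T]$ on the characteristic functions $\chi_{F_j}$ (no signs needed), and the Morrey/$A_p$ bookkeeping you describe ($\|\chi_{F_j}\|_{L_\omega^{p,\kappa}}\lesssim\omega(\tilde B)^{(1-\kappa)/p}$, $\omega(\tilde B)\approx\omega(B)$, cancellation of the $\omega(B)^{\pm(1-\kappa)/p}$ factors) finishes. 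Note also that your key inequality is stated in the wrong direction: the kernel lower bound gives that the oscillation $\frac1{\mu(B)}\int_B|b-\alpha_{\tilde B}(b)|\,d\mu$ is bounded \emph{above} by the averages of $|[b,T]\chi_{F_j}|$ over $B$; estimating it ``from below'' by the commutator, as written, combines with the boundedness hypothesis to give nothing. By contrast, your worry about verifying the sign constancy of $K$ via \eqref{smooth of C-Z-S-I-O} is not where the difficulty lies; that is exactly what Lemma \ref{prop homogeneous} (quoted from the two-weight commutator paper) provides.
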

\begin{thm}\label{thm main2}

Let $p\in(1,\infty), \kappa\in(0,1)$ and $ \omega\in A_{p}(X)$. Suppose $b\in L^1_{\rm loc}(X)$ and that $T$ is a Calder\'on--Zygmund operator as in Definition \ref{def 1} and satisfies the non-degenerate condition \eqref{lowerbound}. Then the commutator $[b,T]$  has the following  compactness characterization:
\begin{enumerate}
    \item[\rm(i)] If $b\in VMO(X)$, then $[b,T]$ is compact on
    $L_{\omega}^{p,\kappa}(X)$.
    \item[\rm(ii)] If b is real valued and $[b,T]$ is compact on $L_{\omega}^{p,\kappa}(X)$, then $b\in VMO(X)$.
\end{enumerate}

\end{thm}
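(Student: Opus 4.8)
\emph{Strategy.} Both implications are obtained by coupling the boundedness result of Theorem~\ref{thm main1} with, respectively, a Fr\'echet--Kolmogorov--type precompactness criterion on $L_\omega^{p,\kappa}(X)$ (for sufficiency) and a contradiction argument built on the non-degeneracy bound \eqref{lowerbound} (for necessity).

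\emph{Part (i): $b\in VMO(X)\Rightarrow[b,T]$ compact.} I would first invoke the description of $VMO(X)$ as the closure in the $BMO(X)$ norm of a class of ``nice'' functions, e.g.\ uniformly continuous functions with bounded support (equivalently, finite linear combinations of H\"older bumps subordinate to a dyadic system on $X$). Since the compact operators on $L_\omega^{p,\kappa}(X)$ form a closed subspace of the bounded operators and since Theorem~\ref{thm main1}(i) gives $\|[b_1,T]-[b_2,T]\|_{L_\omega^{p,\kappa}(X)\to L_\omega^{p,\kappa}(X)}\lesssim\|b_1-b_2\|_{BMO(X)}$, it is enough to prove that $[b,T]$ is compact for such a nice $b$. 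For such $b$ the commutator has ``kernel'' $(b(x)-b(y))K(x,y)$, which near the diagonal enjoys an extra gain $|b(x)-b(y)|\lesssim \min(d(x,y),1)$ from the Lipschitz regularity of $b$, and which is localized by the bounded support of $b$. I would truncate this kernel to the region $\delta\le d(x,y)\le\delta^{-1}$ (with a smooth cutoff), obtaining an operator $S_\delta$ with bounded, compactly supported kernel, while $\|[b,T]-S_\delta\|_{L_\omega^{p,\kappa}(X)\to L_\omega^{p,\kappa}(X)}\to0$ as $\delta\to0$ by the size and smoothness estimates \eqref{size of C-Z-S-I-O}, \eqref{smooth of C-Z-S-I-O} together with the regularity and bounded support of $b$. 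It then remains to check that $\{S_\delta f:\ \|f\|_{L_\omega^{p,\kappa}(X)}\le1\}$ is relatively compact: its elements are uniformly supported in a fixed ball up to an $L_\omega^{p,\kappa}$-negligible tail, and the averaging operators $\mathbb{E}_k$ onto level-$k$ dyadic cubes satisfy $\sup_{\|f\|\le1}\|\mathbb{E}_kS_\delta f-S_\delta f\|_{L_\omega^{p,\kappa}(X)}\to0$ as $k\to\infty$, both facts following from the boundedness of the kernel of $S_\delta$, its quantitative equicontinuity, and the $A_p(X)$ property of $\omega$. Letting $\delta\to0$ concludes Part (i).

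\emph{Part (ii): $[b,T]$ compact and $b$ real-valued $\Rightarrow b\in VMO(X)$.} Theorem~\ref{thm main1}(ii) already yields $b\in BMO(X)$, so one must rule out the failure of any of the three vanishing conditions defining $VMO(X)$ (oscillation vanishing as the radius $\to0$, as the radius $\to\infty$, and as the ball moves off to infinity, relative to a fixed reference point $x_0\in X$). Suppose one of them fails: then there are $\varepsilon_0>0$ and balls $B_j=B(x_j,r_j)$ with $\frac1{\mu(B_j)}\int_{B_j}|b-b_{B_j}|\,d\mu\ge\varepsilon_0$ and with $r_j\to0$, $r_j\to\infty$, or $d(x_j,x_0)\to\infty$ according to the case. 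Replacing $b_{B_j}$ by a median $m_j$ of $b$ on $B_j$ and splitting $B_j$ into the parts where $b\ge m_j$ and $b<m_j$, a standard median argument produces, on a fixed proportion of $B_j$, the bound $|b-m_j|\gtrsim\varepsilon_0$. Using the non-degeneracy condition \eqref{lowerbound} I would select $y_j$ with $d(x_j,y_j)\approx r_j$ and $|K(x_j,y_j)|\gtrsim\mu(B_j)^{-1}$, and a ball $\widetilde B_j$ centred near $y_j$ of radius $\approx r_j$ on which $K$ keeps a comparable lower bound and a fixed sign (using \eqref{smooth of C-Z-S-I-O} to stay away from cancellation). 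Taking $f_j$ to be the indicator of the appropriate half of $B_j$, multiplied by a unimodular constant and normalized so that $\|f_j\|_{L_\omega^{p,\kappa}(X)}\approx1$, one estimates $[b,T]f_j(x)=\int_X(b(x)-b(y))K(x,y)f_j(y)\,d\mu(y)$ from below on $\widetilde B_j$: writing $b(x)-b(y)=(b(x)-m_j)-(b(y)-m_j)$, the first term is controlled in the Morrey norm using $b\in BMO(X)$, while the second contributes the main lower bound, giving $\|[b,T]f_j\|_{L_\omega^{p,\kappa}(X)}\gtrsim\varepsilon_0$. Finally, after passing to a subsequence so that the functions $f_j$ are asymptotically disjointly supported (their supports shrink to a point, disperse over larger and larger balls, or escape to infinity, according to the case), the sequence $\{f_j\}$ is bounded in $L_\omega^{p,\kappa}(X)$ but $\{[b,T]f_j\}$ has no norm-convergent subsequence, contradicting compactness. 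Hence no vanishing condition fails and $b\in VMO(X)$.

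\emph{Main obstacle.} The genuinely delicate points lie in the weighted Morrey setting itself: establishing the precompactness criterion on $L_\omega^{p,\kappa}(X)$ in Part (i), and, in Part (ii), verifying that the normalized test functions $f_j$ are uniformly bounded and ``asymptotically negligible'' in $L_\omega^{p,\kappa}(X)$ --- since the Morrey norm is a supremum over \emph{all} balls and the space is non-reflexive, both statements are subtler than their $L^p_\omega(X)$ analogues and require uniform control, over the centre and radius of $B_j$, of the $A_p(X)$ comparisons relating $\omega(B)$, $\mu(B)$ and the quantities appearing in the kernel estimates. Within Part (ii) the case $r_j\to\infty$ is the most technical, since there the support of $f_j$ is large and one must instead exploit the dispersion of its normalized mass to contradict compactness.
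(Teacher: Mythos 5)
Your Part (i) is essentially sound and runs parallel to the paper's argument: the paper also reduces to $b\in{\rm Lip}_c(\beta)$ via the boundedness estimate of Theorem \ref{thm main1}(i), but instead of your two-sided kernel truncation plus approximation by averaging (finite-rank) operators it only truncates near the diagonal (the operators $T_\eta$, Lemma \ref{lembound}) and then verifies directly the three hypotheses of a Fr\'echet--Kolmogorov-type criterion for $L^{p,\kappa}_\omega(X)$ (Lemma \ref{lemrelcompact}); your route would work but still needs the weighted Morrey tail estimate (the analogue of the paper's verification of condition (ii) of Lemma \ref{lemrelcompact}) to control the far part $d(x,y)>\delta^{-1}$, so nothing is saved there.

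Part (ii), however, has a genuine gap, in fact two. First, your lower bound $\|[b,T]f_j\|_{L^{p,\kappa}_\omega(X)}\gtrsim\varepsilon_0$ does not follow from the splitting $b(x)-b(y)=(b(x)-m_j)-(b(y)-m_j)$ when you test on a ball $\widetilde B_j$ at distance $\approx r_j$: the ``first term'' $(b(x)-m_j)Tf_j(x)$ is only controlled by $\|b\|_{BMO(X)}\approx 1$, which in general dominates the main term of size $\varepsilon_0$, so no lower bound survives the subtraction. This is exactly why the paper builds $f_j$ with an extra correction $a_j\chi_{B_j}$ so that $\int f_j\,d\mu=0$ and tests on the far-away reflected balls $B_j^k=\widetilde{A_2^{k-1}B_j}$ given by \eqref{e-assump cz ker low bdd}: the cancellation yields the decay factor $k^pA_2^{-k\sigma_0 p}$ in \eqref{commulower}, which for $k\geq K_0$ makes the error strictly smaller than the main term, giving \eqref{communorm}. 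Second, a uniform lower bound $\|[b,T]f_j\|\gtrsim\varepsilon_0$ for a bounded sequence does not contradict compactness; one must exclude norm-convergent subsequences, i.e. prove a separation $\|[b,T]f_j-[b,T]f_{j+m}\|\geq C>0$. ``Asymptotically disjoint supports'' does not give this (disjointly supported normalized functions need not be weakly null in the non-reflexive Morrey scale, and $[b,T]f_{j+m}$ is not supported near $B_{j+m}$), and in two of your three cases it is not even available: when $r_j\to0$ with bounded centres, or $r_j\to\infty$ with a common point, the balls are nested rather than disjoint. The paper's Lemma \ref{weightedestimate} supplies the separation in the genuinely disjoint case, balancing the BMO logarithm $\log\frac{d(x_j,x_{j+m})}{r_{j+m}}$ against the kernel decay $\big(\frac{r_{j+m}}{d(x_j,x_{j+m})}\big)^{\sigma_0}$ under the separation hypothesis \eqref{intersection}, while the nested cases require the additional $\mathcal J,\mathcal J_1,\mathcal J_2$ decomposition and the rapid-decay selection \eqref{sizeratios} in the proof of Theorem \ref{thm main2}(ii). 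None of this machinery is present, even in outline, in your proposal.
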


\medskip
Throughout the paper,
we denote by $C$ and $\widetilde{C}$ {\it positive constants} which
are independent of the main parameters, but they may vary from line to
line. For every $p\in(1, \fz)$, we denote by $p'$ the conjugate of $p$, i.e., $\frac{1}{p'}+\frac{1}{p}=1$.  If $f\le Cg$ or $f\ge Cg$, we then write $f\ls g$ or $f\gs g$;
and if $f \ls g\ls f$, we  write $f\approx g.$

\section{Preliminaries on Spaces of Homogeneous Type}
\label{s2}
\noindent

Let $(X,d,\mu)$ be a space of homogeneous type as mentioned in Section 1. We now recall the BMO and VMO space.

\begin{defn}\label{d-bmo}
A function $b\in L^1_{\rm loc}(X)$ belongs to
the  BMO space $BMO(X)$ if
\begin{equation*}
\|b\|_{BMO(X)}:=\sup_{B}M(b,B):=\sup_{B}{1\over \mu(B)}\dint_{B}
\lf|b(x)-b_{B}\r|\, d\mu(x)<\fz,
\end{equation*}
where the sup is taken over all quasi-metric balls $B\subset X$ and
$$ b_B= {1\over\mu(B)} \int_B b(y)d\mu(y). $$
\end{defn}

The following John-Nirenberg inequalities on spaces of homogeneous type comes from \cite{Kr}.

\begin{lem}[\cite{Kr}]\label{lem-jn1}
If $f\in {\rm BMO}(X)$, then there exist positive constants $C_1$ and $C_2$ such that for every ball $B\subset X$ and every $\alpha>0$, we have
$$\mu(\{x\in B: |f(x)-f_B|>\alpha \})\leq C_1\lambda(B)\exp\Big\{- {C_2\over \|f\|_{{\rm BMO}(X)}}\alpha\Big\}.$$
\end{lem}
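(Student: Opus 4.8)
The plan is to derive the inequality from a Calder\'on--Zygmund decomposition on $(X,d,\mu)$, iterated dyadically in the spirit of the classical Euclidean argument. By homogeneity, replacing $f$ by $f/\|f\|_{BMO(X)}$, we may assume $\|f\|_{BMO(X)}=1$; we fix a ball $B\subset X$, and it suffices to prove $\mu(\{x\in B:|f(x)-f_B|>\alpha\})\le C_1\mu(B)e^{-C_2\alpha}$ with $C_1,C_2$ depending only on $A_0$ and $C_\mu$. First I would fix the combinatorial substitute for dyadic cubes. Since $(X,d,\mu)$ carries Christ-type systems of dyadic cubes adapted to any prescribed ball, I may assume $B$ is comparable to a cube $Q_0$ of such a system $\mathcal{D}$, with $\mu(B)\approx\mu(Q_0)$ and $|f_B-f_{Q_0}|$ bounded by an absolute multiple of $1=\|f\|_{BMO(X)}$; passing from $B$ to $Q_0$ costs only constants depending on $A_0,C_\mu$.

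Next, at a fixed height $\lambda_0=\lambda_0(A_0,C_\mu)>1$, I perform a stopping-time selection inside $Q_0$: let $\{Q_j\}$ be the maximal dyadic cubes $Q\subsetneq Q_0$ with $\frac{1}{\mu(Q)}\int_Q|f-f_{Q_0}|\,d\mu>\lambda_0$. Maximality together with the doubling condition \eqref{doubling condition} gives, for each selected cube, $\lambda_0<\frac{1}{\mu(Q_j)}\int_{Q_j}|f-f_{Q_0}|\,d\mu\le C\lambda_0$, whence $|f_{Q_j}-f_{Q_0}|\le C\lambda_0$; the Lebesgue differentiation theorem (valid on doubling spaces) gives $|f(x)-f_{Q_0}|\le\lambda_0$ for $\mu$-a.e.\ $x\in Q_0\setminus\bigcup_jQ_j$; and integrating the lower bound and using $\int_{Q_0}|f-f_{Q_0}|\,d\mu\le\mu(Q_0)$ yields $\sum_j\mu(Q_j)\le\lambda_0^{-1}\mu(Q_0)$. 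This is the elementary brick. I then iterate: repeating the selection inside each $Q_j$ with $f_{Q_j}$ in place of $f_{Q_0}$, and so on, produces after $N$ steps a family $\{Q^{(N)}_i\}$ of pairwise disjoint dyadic cubes with $\sum_i\mu(Q^{(N)}_i)\le\lambda_0^{-N}\mu(Q_0)$, while telescoping the controlled jumps $|f_{\mathrm{child}}-f_{\mathrm{parent}}|\le C\lambda_0$ along the at most $N$ generations gives $|f(x)-f_{Q_0}|\le c_*N\lambda_0$ for $\mu$-a.e.\ $x\in Q_0\setminus\bigcup_iQ^{(N)}_i$, where $c_*$ depends only on $A_0,C_\mu$.

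Consequently $\mu(\{x\in Q_0:|f-f_{Q_0}|>c_*N\lambda_0\})\le\lambda_0^{-N}\mu(Q_0)$ for every $N\ge1$. Given $\alpha>0$: if $\alpha\ge c_*\lambda_0$, choose $N=\lfloor\alpha/(c_*\lambda_0)\rfloor\ge1$, so that $\lambda_0^{-N}\le\lambda_0\,\lambda_0^{-\alpha/(c_*\lambda_0)}=\lambda_0\,e^{-(\ln\lambda_0/(c_*\lambda_0))\,\alpha}$, which is the asserted bound on $Q_0$ with $C_1$ a multiple of $\lambda_0$ and $C_2=\ln\lambda_0/(c_*\lambda_0)$; if $0<\alpha<c_*\lambda_0$ the estimate is trivial since the left side is $\le\mu(Q_0)$ while $e^{-C_2\alpha}\ge e^{-C_2c_*\lambda_0}$, so enlarging $C_1$ handles it. Transferring back from $Q_0$ to $B$ and undoing the normalization (which divides $C_2$ by $\|f\|_{BMO(X)}$) completes the proof.

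The main obstacle is the infrastructure of the first two paragraphs: on a general space of homogeneous type one must have a dyadic (or Vitali-type stopping) structure for which the Calder\'on--Zygmund selection runs with constants depending only on $A_0$ and $C_\mu$, and one must verify the \emph{controlled jump} estimate $|f_{\mathrm{child}}-f_{\mathrm{parent}}|\lesssim\lambda_0$ with a constant uniform in scale and location --- it is precisely this uniformity, fed by the doubling condition \eqref{doubling condition} and the quasi-triangle inequality \eqref{eqn:quasitriangleineq}, that converts the per-generation gain $\lambda_0^{-1}$ into the exponential decay $e^{-C_2\alpha}$. Everything after that is the standard telescoping and optimization over $N$.
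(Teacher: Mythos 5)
Your argument is essentially correct, but note that the paper does not prove this lemma at all: it is quoted directly from Kronz \cite{Kr}, so there is no internal proof to compare against. What you have reconstructed is the standard Calder\'on--Zygmund/good-$\lambda$ iteration proof of John--Nirenberg, transplanted to $(X,d,\mu)$ via Christ-type dyadic cubes, and the skeleton (stopping-time selection, controlled parent--child jumps, geometric decay per generation, optimization over $N$, transfer from the cube back to the ball) is sound. Two points deserve care. First, the reduction ``any ball $B$ is comparable to a single dyadic cube $Q_0\supset B$ of a system $\mathcal D$'' is not available for one fixed Christ system; you need either the adjacent dyadic systems of Hyt\"onen--Kairema or, as in the classical Coifman--Weiss/Kronz route, a ball-based Calder\'on--Zygmund (Vitali) stopping argument that avoids cubes altogether --- you flagged this as ``infrastructure,'' and it is exactly the nontrivial input. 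Second, since $\mathrm{BMO}(X)$ is defined through balls, the cube averages satisfy only $\frac{1}{\mu(Q)}\int_Q|f-f_Q|\,d\mu\le C(A_0,C_\mu)$ after normalization, so your bound $\sum_j\mu(Q_j)\le\lambda_0^{-1}\mu(Q_0)$ really reads $\le C\lambda_0^{-1}\mu(Q_0)$, and likewise at every generation; this is harmless because you may choose $\lambda_0>2C$, but it should be said explicitly, as the exponential rate $C_2$ comes precisely from this per-generation factor. With those two points supplied (both standard, relying only on \eqref{doubling condition} and \eqref{eqn:quasitriangleineq}), your proof is complete and is, in substance, the same argument as in the cited source; also note that $\lambda(B)$ in the statement is a typo for $\mu(B)$, which is what your argument produces.
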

We recall the median value $\alpha_B(f)$ (\cite{Pxq}). For any real valued function $f\in L_{\rm loc}^{1}(X)$ and $B\subset X$, let $\alpha_B(f)$ be a real number such that

\begin{equation*}
    \inf_{c\in \mathbb{R}}\frac{1}{\mu(B)}\int_{B}|f(x)-c|d\mu(x)
\end{equation*}
is attained. Moreover, it is known that $\alpha_B(f)$ satisfies that

\begin{equation}\label{greater}
    \mu(\{x\in B: f(x)>\alpha_B(f)\})\leq \frac{\mu(B)}{2}
\end{equation}
and
\begin{equation}\label{lesser}
    \mu(\{x\in B: f(x)<\alpha_B(f)\})\leq \frac{\mu(B)}{2}.
\end{equation}
And it is easy to see that for any ball $B\subset X$,
\begin{align}\label{Msim}
M(b,B)\approx {1\over \mu(B)}\int_{B}\left|b(x)-\alpha_B(b)\right|d\mu(x),
\end{align}
where the implicit constants are independent of the function $b$ and the ball $B$.

By Lip$(\beta)$, $0<\beta<\infty$, we denote the set of all functions $\phi(x)$ defined on $X$ such that there exists a finite constant $C$ satisfying
$$|\phi(x)-\phi(y)|\leq Cd(x,y)^{\beta}$$
for every $x$ and $y$ in $X$. $\|\phi\|_{\beta}$ will stand for the least constant $C$ satisfying the condition above.
By ${\rm Lip}_{c}(\beta)$, we denote the set of all Lip$(\beta)$ functions with compact support on $X$.

\begin{defn}
We define ${\rm VMO}(X)$ as the closure of the ${\rm Lip}_{c}(\beta)$ functions $X$
 under the norm of the BMO space.
\end{defn}

We also need to establish the characterisation of ${\rm VMO}(X)$. We will give its proof in Appendix. For the Euclidean and the stratified Lie groups case one can refer to \cite{U1} and $\cite{Pxq}$.

\begin{lem}\label{lemvmo}
Let $f \in \mathrm{BMO}\left(X\right)$. Then $f \in
\mathrm{VMO}\left(X\right)$ if and only if $f$ satisfies the following three conditions:
\begin{enumerate}
\item[\rm(i)]$\lim\limits _{a \rightarrow 0} \sup\limits _{r_{B}=a} M(f, B)=0;$

\item[\rm(ii)]$\lim\limits _{a \rightarrow \infty} \sup\limits _{r_{B}=a} M(f, B)=0;$
\item[\rm (iii)] $\lim\limits _{r \rightarrow \infty} \sup\limits _{B \subset X \setminus B(x_0, r)} M(f, B)=0,$
\end{enumerate}
where $r_{B}$ is the radius of the ball $B$ and $x_0$ is a fixed point in $X$.
\end{lem}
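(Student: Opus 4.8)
The plan is to prove both directions via the standard ``annular truncation'' argument adapted to the quasi-metric setting. For the \emph{necessity} part, suppose $f\in\mathrm{VMO}(X)$, so there is a sequence $\phi_k\in{\rm Lip}_c(\beta)$ with $\|f-\phi_k\|_{\mathrm{BMO}(X)}\to 0$. Since $M(f,B)\le M(f-\phi_k,B)+M(\phi_k,B)\le \|f-\phi_k\|_{\mathrm{BMO}(X)}+M(\phi_k,B)$, it suffices to verify (i)--(iii) for each compactly supported Lipschitz function $\phi=\phi_k$ and then let $k\to\infty$. For (i): if $r_B=a$ is small, then $M(\phi,B)\le \frac1{\mu(B)}\int_B|\phi(x)-\phi(x_B)|\,d\mu(x)\le \|\phi\|_\beta (2A_0 a)^\beta\to0$, uniformly in $B$. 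For (ii): if $r_B=a$ is large, then either $B$ misses $\supp\phi$ entirely (so $M(\phi,B)=0$) or $B$ meets $\supp\phi$; in the latter case $\mu(B)$ is bounded below by the measure of a fixed large ball containing $\supp\phi$ (using doubling and $r_B=a\to\infty$), while $\int_B|\phi-\phi_B|\,d\mu\le 2\int_X|\phi|\,d\mu$ is a fixed finite quantity, so $M(\phi,B)\to0$ uniformly. For (iii): if $B\subset X\setminus B(x_0,r)$ with $r$ large, then for $r$ exceeding (a multiple of) the radius of a ball containing $\supp\phi$, any such $B$ is disjoint from $\supp\phi$, whence $M(\phi,B)=0$.

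For the \emph{sufficiency} part, suppose $f\in\mathrm{BMO}(X)$ satisfies (i)--(iii); I must produce $\phi_k\in{\rm Lip}_c(\beta)$ with $\|f-\phi_k\|_{\mathrm{BMO}(X)}\to0$. The construction proceeds in two steps: first truncate $f$ spatially to get compact support, then mollify to gain Lipschitz regularity. For the spatial truncation, fix the base point $x_0$, and for large $R$ choose a cutoff $\eta_R\in{\rm Lip}(\beta)$ with $\eta_R\equiv1$ on $B(x_0,R)$, $\supp\eta_R\subset B(x_0,2A_0R)$, and $\|\eta_R\|_\beta\lesssim R^{-\beta}$ (such cutoffs exist on spaces of homogeneous type, e.g. $\eta_R(x)=\max\{0,\min\{1, 2-R^{-1}d(x_0,x)\}\}$ adjusted for the quasi-triangle inequality). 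Set $f_R:=\eta_R\cdot(f-f_{B(x_0,R)})$; one shows $\|f-f_{B(x_0,R)}-f_R\|_{\mathrm{BMO}(X)}\to0$ as $R\to\infty$, using condition (iii) to control balls far from $x_0$, condition (ii) to control large balls, and a direct estimate on the product $\eta_R\cdot(f-f_{B(x_0,R)})$ on the transition annulus (Lemma~\ref{lem-jn1}, the John--Nirenberg inequality, bounds $\frac1{\mu(B)}\int_B|f-f_{B(x_0,R)}|$ in terms of $\|f\|_{\mathrm{BMO}(X)}\log(\dots)$, which is absorbed by the small factor coming from $\|\eta_R\|_\beta$ times the diameter). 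Since $f_{B(x_0,R)}$ is a constant it does not affect the BMO norm, so $f_R$ approximates $f$ in BMO.

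For the mollification step, I work with the compactly supported BMO function $g:=f_R$ (which additionally inherits the smallness of (i), now on all scales since its support is bounded). Using a construction of approximate identities / Coifman--Weiss type mollifiers on $(X,d,\mu)$—or, more elementarily, the reconstruction afforded by a system of dyadic cubes and a partition of unity subordinate to a net at scale $\delta$—one builds $g_\delta\in{\rm Lip}_c(\beta)$ with $\|g-g_\delta\|_{\mathrm{BMO}(X)}\to0$ as $\delta\to0$. The key point is that for a ball $B$ of radius $\ge \delta$, the oscillation of $g-g_\delta$ is controlled by averages of the oscillation of $g$ over nearby balls of radius $\approx\delta$, which is small by condition (i) for $g$; and for a ball $B$ of radius $<\delta$, one uses that $g_\delta$ is Lipschitz with constant $\lesssim\delta^{-\beta}\|g\|_{\mathrm{BMO}(X)}$ (so its oscillation on $B$ is $\lesssim (r_B/\delta)^\beta\|g\|_{\mathrm{BMO}(X)}$) together with (i) for $g$. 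Composing, $\phi_k:=(f_{R_k})_{\delta_k}$ with $R_k\to\infty$, $\delta_k\to0$ chosen appropriately gives the desired approximating sequence.

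The main obstacle I anticipate is the mollification step on a general space of homogeneous type: unlike in $\mathbb{R}^n$ or on stratified groups there is no group convolution, so the Lipschitz mollifier must be built by hand from a net/partition-of-unity at each scale, and one has to check carefully that the resulting $g_\delta$ is genuinely in ${\rm Lip}_c(\beta)$ with the claimed quantitative Lipschitz bound and that $\|g-g_\delta\|_{\mathrm{BMO}(X)}$ is controlled by the modulus in condition (i)—the transition from pointwise/averaged oscillation control to the BMO norm over \emph{all} balls (small, comparable, and large) is where the bookkeeping is heaviest. The spatial-truncation step is comparatively routine once the John--Nirenberg inequality (Lemma~\ref{lem-jn1}) is invoked to handle the annulus term.
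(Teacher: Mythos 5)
Your necessity half is the same (trivial) argument as the paper's, but your sufficiency half takes a genuinely different, Euclidean-style route: multiply $f-f_{B(x_0,R)}$ by a Lipschitz cutoff, then smooth the compactly supported remainder by a fixed-scale-$\delta$ partition-of-unity average. The paper instead builds a single approximant $\phi_\varepsilon$ by averaging $f$ over a cover of $X$ by balls whose radius is a fixed small \emph{fraction of the distance to $x_0$} (radius $2^{-i_\varepsilon}$ inside $B(x_0,2^{j_\varepsilon})$, radius $2^{m-j_\varepsilon-i_\varepsilon}$ on the annulus $B^m\setminus B^{m-1}$), sets $\phi_\varepsilon$ equal to the constant $f_{B^{m_\varepsilon}\setminus B^{m_\varepsilon-1}}$ far out, proves \eqref{g-f} directly from (i)--(iii), and only at the end produces a ${\rm Lip}_c(\beta)$ function by uniform approximation plus the Mac\'ias--Segovia mollifier (uniform closeness trivially controls the $\mathrm{BMO}$ error, so no BMO estimate is needed for the smoothing). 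The point of the distance-proportional averaging scale is that every measure ratio that arises is bounded by a constant of the form $2^{n(i_\varepsilon+j_\varepsilon+O(1))}$, which is then absorbed by the sharpened application \eqref{lmeps} of condition (ii).

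As written, your plan has genuine gaps exactly at the two quantitative steps where the proof lives. First, the truncation claim $\|(1-\eta_R)(f-f_{B(x_0,R)})\|_{\mathrm{BMO}(X)}\to 0$ is asserted but not confronted for the hard balls: a ball $B$ of radius $\rho\gg R$ meeting the transition annulus of $\eta_R$. The product rule there produces the cross term $|f_B-f_{B(x_0,R)}|\cdot M(\eta_R,B)$; chaining through condition (ii) only gives $|f_B-f_{B(x_0,R)}|\lesssim \varepsilon\,\log(\rho/R)$, while $M(\eta_R,B)\lesssim \mu(B(x_0,2A_0R))/\mu(B)$, so to beat the logarithm you need a power-type lower bound on $\mu(B(x_0,\rho))/\mu(B(x_0,R))$, i.e.\ reverse doubling, which a general space of homogeneous type (the lemma assumes only doubling and $\mu(X)=\infty$) does not provide; the paper avoids precisely this by keeping $\phi_\varepsilon$ constant outside $B^{m_\varepsilon}$ and building the extra factor $2^{n(-i_\varepsilon-j_\varepsilon-1)}$ into \eqref{lmeps}. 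You would need either to supply such an argument or to restructure the cutoff step. Second, in the mollification step your stated Lipschitz bound $\lesssim \delta^{-\beta}\|g\|_{\mathrm{BMO}(X)}$ only yields $M(g_\delta,B)\lesssim\|g\|_{\mathrm{BMO}(X)}$ on balls of radius $r<\delta$, which is bounded but not small; what is needed (and true) is that averages of $g$ over neighbouring scale-$\delta$ balls differ by at most $\sup_{r_B\leq C\delta}M(g,B)$, so the Lipschitz constant is $\delta^{-\beta}$ times that small modulus, and only then does condition (i) give $\|g-g_\delta\|_{\mathrm{BMO}(X)}\to0$. Both defects are repairable, but since they are the decisive estimates, the proposal as it stands does not yet constitute a proof.
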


To this end, we  recall the definition of $A_p$ weights.

\begin{defn}
  \label{def:Ap}
  Let $\omega(x)$ be a nonnegative locally integrable function
  on~$X$. For $1 < p < \infty$, we
  say $\omega$ is an $A_p$ \emph{weight}, written $\omega\in
  A_p$, if
  \[
    [\omega]_{A_p}
    := \sup_B \left(\intav_B \omega\right)
    \left(\intav_B
      \left(\dfrac{1}{\omega}\right)^{1/(p-1)}\right)^{p-1}
    < \infty.
  \]
  Here the suprema are taken over all balls~$B\subset X$.
  The quantity $[\omega]_{A_p}$ is called the \emph{$A_p$~constant
  of~$\omega$}.
For $p = 1$, we say $\omega$ is an $A_1$ \emph{weight},
  written $\omega\in A_1$, if $M(\omega)(x)\leq \omega(x)$ for $\mu$-almost every $x\in X$, and let $A_\infty := \cup_{1\leq p<\infty} A_p$ and we have
    \(
    [\omega]_{A_\infty}
    := \sup_B \left(\intav_B \omega\right)
    \exp\left(\intav_B \log \left(\frac{1}{\omega}\right) \right)
    < \infty.
  \)
\end{defn}

Next we note that for $\omega\in A_p$ the measure $\omega(x)d\mu(x)$ is a doubling measure on $X$. To be more precise, we have
that for all $\lambda>1$ and all balls $B\subset X$,
\begin{align}\label{doubling constant of weight}
\omega(\lambda B)\leq \lambda^{np}[\omega]_{A_p}\omega(B),
\end{align}
where $n$ is the upper dimension of the measure $\mu$, as in \eqref{upper dimension}.

We also point out that for $\omega\in A_\infty$, there exists $\gamma>0$ such that for every ball $B$,
$$ \mu\Big( \Big\{x\in B: \ \omega(x)\geq\gamma \intav_B \omega\Big\} \Big)\geq {1\over 2}\mu(B). $$
And this implies that for every ball $B$ and for all $\delta\in(0,1)$,
\begin{align}\label{e-reverse holder}
\intav_B \omega\le C \left(\intav_B \omega^\delta\right)^{1/\delta};
\end{align}
see  also \cite{LOR2}.

By the definition of $A_p$ weight and H\"older's inequality, we can easily obtain the  following
standard properties.
\begin{lem}\label{lemcomparison}
Let $\omega\in A_p(X)$, $p\geq 1$. Then there exists constants $\hat{C_1},\hat{C_2} >0$ and $\sigma\in(0,1)$ such that the following holds
\begin{equation*}
   \hat{ C_{1}}\left(\frac{\mu(E)}{\mu(B)}\right)^p \leq \frac{\omega(E)}{\omega(B)}\leq \hat{C_2}\left(\frac{\mu(E)}{\mu(B)}\right)^\sigma
\end{equation*}
for any measurable set $E$ of a quasi metric ball $B$.
\end{lem}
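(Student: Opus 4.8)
The statement is the standard two-sided comparison between $\omega(E)/\omega(B)$ and $\mu(E)/\mu(B)$ for an $A_p$ weight, and I would prove the two inequalities separately. The lower bound is a direct consequence of H\"older's inequality together with the defining $A_p$ condition, while the upper bound encodes the $A_\infty$-type self-improvement of the weight, i.e. the reverse H\"older inequality.

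For the lower bound, first take $p>1$. Given a ball $B$ and a measurable set $E\subset B$, write $\mu(E)=\int_E \omega^{1/p}\,\omega^{-1/p}\,d\mu$ and apply H\"older's inequality with exponents $p$ and $p'$ to get $\mu(E)^p\le\omega(E)\big(\int_E\omega^{-1/(p-1)}\,d\mu\big)^{p-1}$. Since $E\subset B$ and the integrand is nonnegative, $\int_E\omega^{-1/(p-1)}\le\int_B\omega^{-1/(p-1)}=\mu(B)\intav_B\omega^{-1/(p-1)}$, and the definition of $[\omega]_{A_p}$ bounds $\big(\intav_B\omega^{-1/(p-1)}\big)^{p-1}\le[\omega]_{A_p}\big(\intav_B\omega\big)^{-1}$. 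Combining these gives $\mu(E)^p\le[\omega]_{A_p}\,\omega(E)\,\mu(B)^p/\omega(B)$, that is $\omega(E)/\omega(B)\ge[\omega]_{A_p}^{-1}(\mu(E)/\mu(B))^p$, so $\hat{C_1}=[\omega]_{A_p}^{-1}$ works. For $p=1$ the $A_1$ condition gives $\omega(x)\ge[\omega]_{A_1}^{-1}\intav_B\omega$ for $\mu$-a.e.\ $x\in B$; integrating this over $E$ yields the same inequality with exponent $p=1$.

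For the upper bound I would invoke the reverse H\"older inequality for $A_p$ weights on spaces of homogeneous type: since $\omega\in A_p\subset A_\infty$, there exist $\epsilon>0$ and $C>0$ such that $\big(\intav_B\omega^{1+\epsilon}\big)^{1/(1+\epsilon)}\le C\intav_B\omega$ for every ball $B\subset X$; this is the quantitative companion of \eqref{e-reverse holder} and follows from the equivalence of the various $A_\infty$ characterisations together with the doubling property \eqref{doubling constant of weight} of $\omega\,d\mu$ (see also \cite{LOR2}). Then for $E\subset B$, H\"older's inequality with exponents $1+\epsilon$ and $(1+\epsilon)/\epsilon$ gives $\omega(E)\le\big(\int_E\omega^{1+\epsilon}\,d\mu\big)^{1/(1+\epsilon)}\mu(E)^{\epsilon/(1+\epsilon)}$; bounding $\int_E\omega^{1+\epsilon}\le\int_B\omega^{1+\epsilon}=\mu(B)\intav_B\omega^{1+\epsilon}$ and applying the reverse H\"older inequality produces $\big(\int_E\omega^{1+\epsilon}\big)^{1/(1+\epsilon)}\le C\,\omega(B)\,\mu(B)^{-\epsilon/(1+\epsilon)}$. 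Hence $\omega(E)/\omega(B)\le C(\mu(E)/\mu(B))^{\sigma}$ with $\sigma=\epsilon/(1+\epsilon)\in(0,1)$ and $\hat{C_2}=C$.

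The only step that is not a one-line computation is the reverse H\"older inequality; everything else reduces to H\"older's inequality and the definitions. On spaces of homogeneous type this inequality is classical (obtained from a Calder\'on--Zygmund decomposition adapted to the quasi-metric and the doubling measure), so I expect no genuine obstacle here, merely the bookkeeping of stating it precisely before use and tracking how $\epsilon$ determines the exponent $\sigma$.
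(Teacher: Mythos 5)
Your proof is correct and is the standard argument; the paper gives no proof of this lemma at all, merely asserting that it follows ``by the definition of $A_p$ weight and H\"older's inequality'' --- your lower bound is exactly that computation, and your upper bound rests on the reverse H\"older inequality, which the paper itself also invokes without proof (see the proof of Lemma \ref{lemsequence} and the inequality \eqref{e-reverse holder}). The only caveat worth recording is that on a general space of homogeneous type the reverse H\"older inequality may only be available in the weak form with a dilated ball on the right-hand side, $\bigl(\intav_B\omega^{1+\epsilon}\bigr)^{1/(1+\epsilon)}\le C\intav_{2B}\omega$, but since $\omega\,d\mu$ is doubling by \eqref{doubling constant of weight} this changes nothing in your computation and the exponent $\sigma=\epsilon/(1+\epsilon)$ survives intact.
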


According to \cite[Theorem 5.5]{HT}, we have the following result for BMO functions on $X$.

\begin{lem}\label{bmoqq}
Let $0<p<\infty$, $v\in A_\infty(X)$, $f\in {\rm{BMO}}(X)$. Then
$$\|f\|_{{\rm{BMO}}(X)}\approx\sup_{B\subset X}\bigg\{{1\over v(B)}\int_B\big| f(x)-f_{B,v} \big|^pv(x)d\mu(x)  \bigg\}^{1\over p},$$
where $f_{B,v}={1\over v(B)}\int_{B}f(y)v(y)d\mu(y).$
\end{lem}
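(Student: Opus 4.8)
Write $N$ for the right-hand side of the asserted equivalence, so that $N^{p}=\sup_{B}\frac{1}{v(B)}\int_{B}|f(x)-f_{B,v}|^{p}v(x)\,d\mu(x)$. The plan is to establish the two inequalities $N\lesssim\|f\|_{{\rm BMO}(X)}$ and $\|f\|_{{\rm BMO}(X)}\lesssim N$ separately. Both rest on the \emph{master estimate} that $\frac{1}{v(B)}\int_{B}|f-f_{B}|^{q}v\,d\mu\lesssim\|f\|_{{\rm BMO}(X)}^{q}$ for every $q\in(0,\infty)$ and every ball $B$, with implicit constant depending only on $q$, $[v]_{A_{\infty}}$ and the structural constants; this follows from the layer-cake formula $\int_{B}|f-f_{B}|^{q}v\,d\mu=q\int_{0}^{\infty}t^{q-1}v(\{x\in B:|f(x)-f_{B}|>t\})\,dt$, the John--Nirenberg inequality (Lemma~\ref{lem-jn1}) applied to $\mu(\{x\in B:|f-f_{B}|>t\})$, the upper comparison $v(E)/v(B)\lesssim(\mu(E)/\mu(B))^{\sigma}$ of Lemma~\ref{lemcomparison} to pass to $v$-measure, and integration of the resulting exponential in~$t$. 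Granting it, the bound $N\lesssim\|f\|_{{\rm BMO}(X)}$ is immediate: $|f-f_{B,v}|^{p}\le C_{p}(|f-f_{B}|^{p}+|f_{B,v}-f_{B}|^{p})$, while $|f_{B,v}-f_{B}|\le\frac{1}{v(B)}\int_{B}|f-f_{B}|v\,d\mu\lesssim\|f\|_{{\rm BMO}(X)}$ by the master estimate with $q=1$; dividing by $v(B)$, integrating over $B$ and taking the supremum over balls finishes.

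The substance of the lemma is the reverse inequality $\|f\|_{{\rm BMO}(X)}\lesssim N$. We may assume $f$ real-valued (treating real and imaginary parts separately) and $\|f\|_{{\rm BMO}(X)}>0$. By \eqref{Msim} it suffices to bound $\frac{1}{\mu(B)}\int_{B}|f-\alpha_{B}(f)|\,d\mu\lesssim N$ for every ball $B$, where $\alpha_{B}(f)$ is the median; abbreviate $m=\alpha_{B}(f)$. The first point is that $|f_{B,v}-m|\lesssim N$: by \eqref{greater}--\eqref{lesser} the sets $\{x\in B:f\le m\}$ and $\{x\in B:f\ge m\}$ each have $\mu$-measure at least $\mu(B)/2$, hence, by the lower comparison $\hat{C}_{1}(\mu(E)/\mu(B))^{q}\le v(E)/v(B)$ of Lemma~\ref{lemcomparison} (with $q\ge1$ such that $v\in A_{q}$), $v$-measure at least $c_{0}\,v(B)$ for a structural $c_{0}>0$; if $f_{B,v}\ge m$ then $|f-f_{B,v}|\ge f_{B,v}-m$ on $\{x\in B:f\le m\}$, so $(f_{B,v}-m)^{p}\,c_{0}\,v(B)\le\int_{B}|f-f_{B,v}|^{p}v\,d\mu\le N^{p}v(B)$, and the case $f_{B,v}\le m$ is symmetric. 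It therefore remains to bound $\frac{1}{\mu(B)}\int_{B}|f-f_{B,v}|\,d\mu$.

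Write $E_{t}=\{x\in B:|f-f_{B,v}|>t\}$, so that $\frac{1}{\mu(B)}\int_{B}|f-f_{B,v}|\,d\mu=\int_{0}^{\infty}\frac{\mu(E_{t})}{\mu(B)}\,dt$, and split the integral at a level $T\asymp\max(N,\|f\|_{{\rm BMO}(X)})$. For $0<t\le T$ we use Chebyshev's inequality in the doubling measure $v\,d\mu$, namely $v(E_{t})\le t^{-p}\int_{B}|f-f_{B,v}|^{p}v\,d\mu\le(N/t)^{p}v(B)$, and then the lower comparison of Lemma~\ref{lemcomparison} to get $\mu(E_{t})/\mu(B)\le\hat{C}_{1}^{-1/q}(N/t)^{p/q}$; integrating $\min(1,\hat{C}_{1}^{-1/q}(N/t)^{p/q})$ over $[0,T]$ and invoking Young's inequality yields a contribution $\le C_{\eta}N+\eta\|f\|_{{\rm BMO}(X)}$ with $\eta>0$ at our disposal (and in fact $\lesssim N$ when $q<p$, in which case this range already suffices). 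For $t>T$, since $|f_{B,v}-m|\lesssim N\le T$ we have $E_{t}\subset\{x\in B:|f-m|>t/2\}$, and the John--Nirenberg inequality (Lemma~\ref{lem-jn1}), recentred at the median via $|f_{B}-m|\lesssim\|f\|_{{\rm BMO}(X)}$, gives $\mu(E_{t})/\mu(B)\lesssim e^{-c\,t/\|f\|_{{\rm BMO}(X)}}$ for $t\gtrsim\|f\|_{{\rm BMO}(X)}$, so a sufficiently large choice of the constant in $T\asymp\|f\|_{{\rm BMO}(X)}$ makes $\int_{T}^{\infty}\frac{\mu(E_{t})}{\mu(B)}\,dt$ as small a multiple of $\|f\|_{{\rm BMO}(X)}$ as we wish. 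Adding the two ranges, using $|f_{B,v}-m|\lesssim N$ and \eqref{Msim}, and choosing $\eta$ and $T$ suitably, we arrive at $\|f\|_{{\rm BMO}(X)}\le CN+\tfrac12\|f\|_{{\rm BMO}(X)}$; since $\|f\|_{{\rm BMO}(X)}<\infty$ by hypothesis, the last term absorbs and $\|f\|_{{\rm BMO}(X)}\lesssim N$ follows.

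The only genuinely delicate step is this final absorption. The weighted $L^{p}$-oscillation supplies merely Chebyshev-type --- hence polynomial --- control $\mu(E_{t})/\mu(B)\lesssim(N/t)^{p/q}$, which is not integrable in $t$ precisely when the $A_{\infty}$-exponent $q$ of $v$ is $\ge p$; one is then forced to import the true exponential decay of $\mu(E_{t})$ from the standing hypothesis $f\in{\rm BMO}(X)$ (via Lemma~\ref{lem-jn1}) and to recover a clean constant by the absorption argument above. For the absorption to close, the constant coming from \eqref{Msim} together with the choice of the splitting level $T$ must be strictly less than $1$, which is arranged by taking $T$ a large enough multiple of $\|f\|_{{\rm BMO}(X)}$ and tuning the Young parameter $\eta$. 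Everything else --- the layer-cake formula, Chebyshev's inequality, and the two-sided comparison of $\mu(E)/\mu(B)$ with $v(E)/v(B)$ provided by Lemma~\ref{lemcomparison} --- is entirely routine.
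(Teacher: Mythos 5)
Your proof is correct, but it takes a genuinely different route from the paper for the simple reason that the paper does not prove Lemma~\ref{bmoqq} at all: it is quoted directly from \cite[Theorem 5.5]{HT}. Your argument is a self-contained derivation in the spirit of the classical weighted John--Nirenberg theory, using only tools already stated in the paper: the easy inequality $N\lesssim\|f\|_{{\rm BMO}(X)}$ via the layer-cake formula, Lemma~\ref{lem-jn1} and the upper comparison in Lemma~\ref{lemcomparison}; and the substantive inequality $\|f\|_{{\rm BMO}(X)}\lesssim N$ via the median bounds \eqref{greater}--\eqref{lesser} and \eqref{Msim} (giving $|f_{B,v}-\alpha_B(f)|\lesssim N$), a Chebyshev estimate in $v\,d\mu$ transferred back to $\mu$ through the lower comparison in Lemma~\ref{lemcomparison}, the exponential decay from Lemma~\ref{lem-jn1} for levels above $T\asymp\max(N,\|f\|_{{\rm BMO}(X)})$, and an absorption which is legitimate precisely because the lemma assumes $f\in{\rm BMO}(X)$ (so the norm is finite) and because all constants are uniform in $B$, so one may take the supremum over balls before absorbing. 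Your diagnosis of where the difficulty sits is exactly right: when $v\in A_q$ with $q<p$ (in particular $v\in A_p$, where H\"older plus the $A_p$ condition gives the lower bound in one line) the polynomial decay alone is integrable, while for general $v\in A_\infty$ one must re-import the exponential decay from the hypothesis $f\in{\rm BMO}(X)$. Two cosmetic remarks: in the borderline case $p/q=1$ the mid-range contribution is $N\log(T/N)$, absorbed via $\log x\le\eta x+C_\eta$ rather than Young's inequality proper; and the reduction to real-valued $f$ should record $|{\rm Re}\,f-({\rm Re}\,f)_{B,v}|\le|f-f_{B,v}|$ so that $N$ does not increase. In short, the paper's citation buys brevity, while your argument buys a complete proof resting only on Lemmas~\ref{lem-jn1} and~\ref{lemcomparison} and the median facts already recorded in Section~\ref{s2}.
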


\section{Boundedness Characterization of Commutators}

In this section, we will give the proof of Theorem \ref{thm main1}.

\subsection{\textbf{Proof of Theorem \ref{thm main1}(i)}.}
In order to prove Theorem \ref{thm main1}(i), we need the following lemma.

\begin{lem}[{\cite{Dgk}}]\label{lem1}
Let $b\in BMO(X)$ and $T$ be Calder\'{o}n--Zygmund
operator on $(X, d,\mu)$ a Space of homogeneous type. If $\kappa\in(0,1), 1<p<\infty$ and $\omega\in A_{p}(X)$, then $[b, T]$ is bounded on $L_{\omega}^{p,\kappa}(X)$.

\end{lem}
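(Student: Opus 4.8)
The plan is to derive the weighted Morrey estimate from the weighted $L^p$ boundedness of the commutator, which is the analogue on spaces of homogeneous type of the Coifman--Rochberg--Weiss theorem: for $\omega\in A_p(X)$ and $b\in BMO(X)$, the operator $[b,T]$ is bounded on $L^p(X,\omega\,d\mu)$ with norm $\ls\|b\|_{BMO(X)}$ (obtained, in the usual way, from a pointwise bound of the sharp maximal function $M^{\#}\big([b,T]f\big)$ by $\|b\|_{BMO(X)}$ times iterated Hardy--Littlewood maximal functions of $f$ and $Tf$, followed by the $A_p$ weighted boundedness of the maximal operator). I would take this as the single external input. Now fix a quasi-metric ball $B_0=B(x_0,r_0)$, set $\widetilde B_0:=2A_0B_0$, and split $f=f\chi_{\widetilde B_0}+f\chi_{X\setminus\widetilde B_0}=:f_1+f_2$. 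For the local piece, the weighted $L^p$ bound gives
\[ \Big(\int_{B_0}\big|[b,T]f_1\big|^p\,\omega\,d\mu\Big)^{1/p}\le\big\|[b,T]f_1\big\|_{L^p(\omega)}\ls\|b\|_{BMO(X)}\,\|f\|_{L^p(\omega,\widetilde B_0)}\ls\|b\|_{BMO(X)}\,\omega(\widetilde B_0)^{\kappa/p}\,\|f\|_{L^{p,\kappa}_\omega(X)}, \]
and, after dividing by $\omega(B_0)^{\kappa/p}$, the ratio $\omega(\widetilde B_0)^{\kappa/p}/\omega(B_0)^{\kappa/p}$ is absorbed by the weight doubling \eqref{doubling constant of weight}.

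For the far piece I would use, for $x\in B_0$, the identity $[b,T]f_2(x)=(b(x)-b_{B_0})\,Tf_2(x)-T\big((b-b_{B_0})f_2\big)(x)$, decompose $X\setminus\widetilde B_0$ into the annuli $A_j:=2^{j+1}B_0\setminus 2^jB_0$, $j\ge1$, and use only the size estimate \eqref{size of C-Z-S-I-O} together with $V(x,y)\approx\mu(2^jB_0)$ for $x\in B_0$, $y\in A_j$. Hölder's inequality with the weight, the $A_p$ condition, and the definition of $\|\cdot\|_{L^{p,\kappa}_\omega(X)}$ then yield, for $x\in B_0$,
\[ \big|Tf_2(x)\big|\ls\|f\|_{L^{p,\kappa}_\omega(X)}\sum_{j\ge1}\omega\big(2^{j+1}B_0\big)^{(\kappa-1)/p}, \]
and the same bound (with an extra factor $j$ and an extra $\|b\|_{BMO(X)}$) for $T\big((b-b_{B_0})f_2\big)(x)$, once one writes $|b(y)-b_{B_0}|\le|b(y)-b_{2^{j+1}B_0}|+|b_{2^{j+1}B_0}-b_{B_0}|\ls|b(y)-b_{2^{j+1}B_0}|+(j+1)\|b\|_{BMO(X)}$ on $2^{j+1}B_0$ and applies Lemma~\ref{bmoqq} with the $A_\infty$ weight $\omega^{1-p'}$ to the term containing $|b-b_{2^{j+1}B_0}|$. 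Since $(\kappa-1)/p<0$ and, by Lemma~\ref{lemcomparison} combined with the reverse doubling of $\mu$ (valid because $\mu(X)=\infty$), $\omega(2^{j+1}B_0)\gs 2^{j\theta}\omega(B_0)$ for some $\theta>0$, these geometric series converge and give the pointwise bound $\big|[b,T]f_2(x)\big|\ls\|b\|_{BMO(X)}\,\|f\|_{L^{p,\kappa}_\omega(X)}\,\omega(B_0)^{(\kappa-1)/p}$ on $B_0$. Multiplying by $\omega(x)^{1/p}$, integrating over $B_0$, and using Lemma~\ref{bmoqq} once more (now with $v=\omega$) to bound $\big(\int_{B_0}|b-b_{B_0}|^p\omega\big)^{1/p}\ls\|b\|_{BMO(X)}\,\omega(B_0)^{1/p}$, we get $\big(\int_{B_0}|[b,T]f_2|^p\omega\big)^{1/p}\ls\|b\|_{BMO(X)}\,\|f\|_{L^{p,\kappa}_\omega(X)}\,\omega(B_0)^{\kappa/p}$; dividing by $\omega(B_0)^{\kappa/p}$ and taking the supremum over $B_0$ closes the argument.

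The main obstacle is the summability of the annular series in this generality: the raw bound $\sum_j \mu(2^{j+1}B_0)^{-1}\int_{2^{j+1}B_0}|f|\,d\mu$ is not summable by itself, and recovering geometric decay requires combining the lower bound on $\mu(2^{j+1}B_0)/\mu(B_0)$ coming from $\mu(X)=\infty$ (reverse doubling) with the two-sided comparison of Lemma~\ref{lemcomparison}. A secondary technical point to track throughout is the discrepancy between the unweighted mean $b_B$ and the weighted mean $b_{B,v}$ (which is $O(\|b\|_{BMO(X)})$), so that Lemma~\ref{bmoqq} can be invoked cleanly for both $v=\omega$ and $v=\omega^{1-p'}$, both of which lie in $A_\infty(X)$.
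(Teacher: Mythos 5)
Your decisive step for the far piece --- summing the annular series --- rests on the claim that doubling together with $\mu(X)=\infty$ yields reverse doubling, hence $\omega(2^{j+1}B_0)\gs 2^{j\theta}\omega(B_0)$ for some $\theta>0$. That claim is false in the generality of this paper: the standing assumptions only give $\mu(2^{j}B_0)\to\infty$, with no rate. For a concrete counterexample take $X=[0,1]\cup\bigcup_{k\ge1}\,[2^{2^k},\,2^{2^k}+2^k]\subset\mathbb{R}$ with the Euclidean metric and Lebesgue measure: this is a non-atomic space of homogeneous type with $\mu(X)=\infty$ (doubling holds with a uniform constant because each new ``island'' has measure comparable to the total measure of everything preceding it), yet for $B_0=B(0,1)$ one has $\mu(2^{j}B_0)\approx j$. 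Taking $\omega\equiv1\in A_p(X)$, your lower bound $\omega(2^{j+1}B_0)\gs 2^{j\theta}\omega(B_0)$ fails, and the series you need, $\sum_{j}\omega(2^{j+1}B_0)^{(\kappa-1)/p}$, is comparable to $\omega(B_0)^{(\kappa-1)/p}\sum_j j^{-(1-\kappa)/p}$, which diverges for every admissible pair $(p,\kappa)$ since $(1-\kappa)/p<1$. So the pointwise estimate $|[b,T]f_2(x)|\ls\|b\|_{{\rm BMO}(X)}\|f\|_{L^{p,\kappa}_{\omega}(X)}\,\omega(B_0)^{(\kappa-1)/p}$ cannot be reached along the route you describe; reverse doubling needs an extra geometric hypothesis (non-empty annuli, connectedness, RD-space), which this paper deliberately does not assume.

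This is precisely the point where the paper's corresponding argument (the proof of Theorem \ref{thm main1}(i); the lemma itself is imported from \cite{Dgk}, and the paper really uses it in the form of the weighted $L^p_\omega$ bound, as you do) differs from yours: instead of summing over all dyadic annuli, it uses only $\mu(2^kB_0)\to\infty$ to select integers $j_1<j_2<\cdots$ with $\mu(2^{j_1}B_0)\ge2\mu(B_0)$ and $\mu(2^{j_{k+1}}B_0)\ge2\mu(2^{j_k}B_0)$, decomposes $X\setminus2B_0$ into the fat annuli $2^{j_{k+1}}B_0\setminus2^{j_k}B_0$, and bounds $V(x_0,y)\gs\mu(2^{j_k}B_0)$ there. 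Then $\mu(2^{j_{k+1}}B_0)\ge2^{k+1}\mu(B_0)$, and Lemma \ref{lemcomparison} gives $\omega(B_0)/\omega(2^{j_{k+1}}B_0)\ls 2^{-(k+1)\sigma}$, so the resulting series (including the one carrying the extra factor $k+1$ from comparing averages of $b$ over $B_0$ and $2^{j_{k+1}}B_0$) converges geometrically, uniformly in $B_0$. With that single replacement your proof becomes essentially the paper's: the local piece via the weighted $L^p_\omega$ boundedness of $[b,T]$, the splitting $(b(x)-c)Tf_2-T((b-c)f_2)$, H\"older plus the $A_p$ condition, and Lemma \ref{bmoqq} (John--Nirenberg) applied with both $\omega$ and $\omega^{1-p'}$; your handling of weighted versus unweighted averages of $b$ is fine and matches the paper's.
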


\begin{proof}[ Proof of Theorem \ref{thm main1}(i)] Let $1<p<\infty$. Then it suffices to show that

\begin{equation*}\left(\frac{1}{[\omega(B)]^{\kappa}}\int_{B}|[b,T](x)|^{p}\omega(x)d\mu(x)\right)^{\frac{1}{p}}\lesssim \|b\|_{{\rm BMO}(X)}\|f\|_{L_{\omega}^{p,\kappa}(X)},
\end{equation*}

holds for any ball $B$.

Now  we will fix a ball $B= B(x_{0}, r)$ and then decompose $f= f\chi_{2B}+ f\chi_{X\setminus 2B} =: f_{1}+ f_{2} $. Then we have
\begin{align*}
&\frac{1}{\omega(B)^{\kappa}}\int_{B}|[b,T]f(x)|^{p}\omega(x)d\mu(x) \\
 &\lesssim
\left( \frac{1}{\omega(B)^{\kappa}}\int_{B}|[b,T]f_{1}(x)|^{p}\omega(x)d\mu(x) + \frac{1}{\omega(B)^{\kappa}}\int_{B}|[b,T]f_{2}(x)|^{p}\omega(x)d\mu(x)\right) \\
& =: I + II.
\end{align*}

For the first term $I$ here, we use Lemma \ref{lem1} and we obtain
\begin{align*}
\frac{1}{\omega(B)^{\kappa}}\int_{B}|[b,T]f_{1}(x)|^{p}\omega(x)d\mu(x) &\leq \frac{1}{\omega(B)^{\kappa}}\int_{X}|[b,T]f_{1}(x)|^{p}\omega(x)d\mu(x)\\
&\lesssim \|b\|_{{\rm BMO}(X)}^{p}\frac{1}{\omega(B)^{\kappa}}\int_{2B}|f(x)|^{p}\omega(x)d\mu(x)\\
&\lesssim \|b\|_{{\rm BMO}(X)}^{p}\|f\|_{L_{\omega}^{p,\kappa}(X)}^{p}.
\end{align*}
So we have
\begin{equation*}
    \|[b, T]f_{1}\|_{L_{\omega}^{p,\kappa}(X)}\lesssim \|b\|_{{\rm BMO}(X)}^{p}\|f\|_{L_{\omega}^{p,\kappa}(X)}^{p}.
\end{equation*}
Now for the second term $II$, observe that for $x\in B$, by \eqref {size of C-Z-S-I-O}, we have
\begin{align*}
   | [b, T]f_{2}(x)|^{p} &\leq \left(\int_{X}|b(x)-b(y)||K(x,y)||f_{2}(y)|d\mu(y)\right)^{p}\\
  &\lesssim \left(\int_{X\setminus2B} \frac{|b(x)-b(y)|}{V(x,y)}|f(y)|d\mu(y)\right)^{p}\\
   &\lesssim\left(\int_{X\setminus2B}\frac{|f(y)|}{V(x_0,y)}\{|b(x)-b_{B, \omega}|+|b_{B, \omega}-b(y)|\}d\mu(y)\right)^p\\
   &\lesssim \left(\int_{X\setminus2B}\frac{|f(y)|}{V(x_0,y)}d\mu(y)\right)^p  |b(x)-b_{B, \omega}|^p + \left(\int_{X\setminus2B}\frac{|f(y)|}{V(x_0,y)}|b_{B, \omega}-b(y)|d\mu(y)\right)^p,
\end{align*}
where $b_{B, \omega} = \frac{1}{\omega(B)}\int_{B}b(y)\omega(y)d\mu(y)$. Hence we have the following
\begin{align*}
    \frac{1}{\omega(B)^{\kappa}}\int_{B}|[b,T]f_{2}(x)|^{p}\omega(x)d\mu(x)
    &\lesssim \frac{1}{\omega(B)^{\kappa}} \left(\int_{X\setminus2B}\frac{|f(y)|}{V(x_0,y)}d\mu(y)\right)^p \int_{B} |b(x)-b_{B, \omega}|^p \omega(x)d\mu(x)\\ &\ \  + \left(\int_{X\setminus2B}\frac{|f(y)|}{V(x_0,y)}|b_{B, \omega}-b(y)|d\mu(y)\right)^p\omega(B)^{1-\kappa}\\
    &=: III + IV.
\end{align*}
Note that  $\lim\limits_{k\to \infty}\mu(2^kB)=\infty$. Then there exist $j_k\in \mathbb{N}$ such that
$$\mu(2^{j_1}B)\geq 2\mu(B)\  {\rm and }\ \mu(2^{j_{k+1}}B)\geq 2\mu(2^{j_k}B).$$
 For $III$, using the H\"older inequality, and using Lemma \ref{lemcomparison} and Lemma \ref{bmoqq}, we get

\begin{align*}
    III &\lesssim \|f\|_{L_{\omega}^{p,\kappa}(X)}^{p}\frac{1}{\omega(B)^\kappa}\left(\sum_{k=0}^{\infty}\int_{2^{j_{k+1}}B\setminus 2^{j_{k}}B}\frac{|f(y)|}{V(x_0,y)}d\mu(y)\right)^p
    \int_{B}|b(x)-b_{B,\omega}|^p\omega(x)d\mu(x)\\
     &\lesssim \|f\|_{L_{\omega}^{p,\kappa}(X)}^{p}\frac{1}{\omega(B)^\kappa}\left(\sum_{k=0}^{\infty}\frac{1}{\omega(2^{j_{k+1}}B)^\frac{1-\kappa}{p}}\right)^p
    \int_{B}|b(x)-b_{B,\omega}|^p\omega(x)d\mu(x)\\
    &\lesssim\|f\|_{L_{\omega}^{p,\kappa}(X)}^{p}\|b\|_{\rm BMO(X)}^p \left(\sum_{k=0}^{\infty}\left(\frac{\omega(B)}{\omega(2^{j_{k+1}}B)}\right)^\frac{1-\kappa}{p}\right)^p\\
     &\lesssim\|f\|_{L_{\omega}^{p,\kappa}(X)}^{p}\|b\|_{\rm BMO(X)}^p \left(\sum_{k=0}^{\infty}2^{-k\sigma\frac{1-\kappa}{p}}\right)^p\\
    &\lesssim \|f\|_{L_{\omega}^{p,\kappa}(X)}^{p}\|b\|_{\rm BMO(X)}^p.
\end{align*}
Using H\"older's inequality for the term $IV$, we get
\begin{equation*}
\begin{aligned}
IV &\lesssim  \left(\sum_{k=0}^{\infty}\frac{1}{\mu(2^{j_{k}}B)}\int_{2^{j_{k+1}}B}|f(y)||b_{B, \omega}-b(y)|d\mu(y)\right)^p\omega(B)^{1-\kappa}\\
&\lesssim \bigg(\sum_{k=0}^{\infty}\frac{1}{\mu(2^{j_{k }}B)}\left(\int_{2^{j_{k+1}}B}|f(y)|^p\omega(y)d\mu(y)\right)^\frac{1}{p}\\
&\ \ \times \left(\int_{2^{j_{k+1}}B}|b_{B, \omega}-b(y)|^{p'}\omega(y)^{1-p'}d\mu(y)\right)^\frac{1}{p'} \bigg)^p \omega(B)^{1-\kappa}\\
&\lesssim \|f\|_{L_{\omega}^{p,\kappa}(X)}^{p}\left\{\sum_{k=0}^{\infty}\frac{\omega(2^{j_{k+1}}B)^\frac{\kappa}{p}}{\mu(2^{j_{k }}B)}\left(\int_{2^{j_{k+1}}B}|b_{B, \omega}-b(y)|^{p'}\omega(y)^{1-p'}d\mu(y)\right)^\frac{1}{p'}\right\}^p\omega(B)^{1-\kappa}.
\end{aligned}
\end{equation*}
Now observe that
\begin{align*}
&\left(\int_{2^{j_{k+1}}B}|b_{B, \omega}-b(y)|^{p'}\omega(y)^{1-p'}d\mu(y)\right)^\frac{1}{p'}\\ &\leq\left(\int_{2^{j_{k+1}}B}\left(|b(y)-b_{2^{j_{k+1}}B,\omega^{1-p'}}|+|b_{2^{j_{k+1}}B,\omega^{1-p'}}-b_{B,\omega}|\right)^{p'}\omega(y)^{1-p'}d\mu(y)\right)^\frac{1}{p'}\\
&\leq \left(\int_{2^{j_{k+1}}B}\left(|b(y)-b_{2^{j_{k+1}}B,\omega^{1-p'}}|\right)^{p'}\omega(y)^{1-p'}d\mu(y)\right)^\frac{1}{p'} \\\ & \hspace{1 cm}+ \left(\int_{2^{j_{k+1}}B}\left(|b_{2^{j_{k+1}}B,\omega^{1-p'}}-b_{B,\omega}|\right)^{p'}\omega(y)^{1-p'}d\mu(y)\right)^\frac{1}{p'}\\
&\hspace{0.5 cm}=: V + VI.
\end{align*}
We have $\omega^{1-p'}\in A_{p'}(X)$ since $\omega\in A_p(X)$. So we obtain
\begin{equation*}
    V \lesssim \|b\|_{\rm BMO(X)}\omega^{1-p'}(2^{j_{k+1}}B)^\frac{1}{p'}.
\end{equation*}
For $VI$, we have
\begin{align*}
  \left | b_{2^{j_{k+1}}B,\omega^{1-p'}} - b_{B,\omega}\right | & \leq \left | b_{2^{j_{k+1}}B,\omega^{1-p'}} - b_{2^{j_{k+1}}B} \right | + \left | b_{2^{j_{k+1}}B} - b_{B} \right | + \left | b_{B} - b_{B,\omega} \right | \\
 &\lesssim \frac{1}{\omega^{1-p'(2^{j_{k+1}}B)}}\int_{2^{j_{k+1}}B}\left | b(y)-b_{2^{j_{k+1}}B} \right |\omega(y)^{1-p'}d\mu(y) \\
   &+  (k+1)\left \| b \right \|_{\rm BMO(X)}+\frac{1}{\omega(B)}\int_{B}\left | b(y)-b_{B} \right |\omega(y)d\mu(y).
\end{align*}
As we have $b\in {\rm {BMO}}(X)$, by  Lemma  \ref{lem-jn1}, there exists some constants $C_1>0$ and $C_2 >0$ such that for any  ball B and $\alpha >0$
\begin{equation*}
\mu(\{x\in B: | b(x)-b_{B}|> \alpha \} )\leq C_{1} \mu( B )e^{-\frac{C_{2}\alpha }{\| b \|_{\rm BMO(X)}}}.
\end{equation*}
Then using Lemma \ref{lemcomparison}, we get

\begin{equation*}
    \omega(\{x\in B:\left | b(x)-b_{B}\right |> \alpha  \})\leq C_{1}\omega(B)e^{-\frac{C_{2}\alpha\sigma }{\left \| b \right \|_{\rm BMO(X)}}}
\end{equation*}
for some $\sigma\in(0,1)$. Hence we have

\begin{align*}
    \int_{B}\left | b(y)-b_{B} \right |\omega(y)d\mu(y)&=\int_{0}^{\infty}\omega(\{y\in B:\left | b(y)-b_{B} \right |>\alpha\})d\alpha \\
    &\lesssim \omega(B)\int_{0}^{\infty}e^{-\frac{\bar{C_{2}}\alpha\sigma }{\left \| b \right \|_{\rm BMO(X)}}}d\alpha \\
    &\lesssim \omega(B)\left \| b \right \|_{\rm BMO(X)}.
    \end{align*}
Similarly, we also get
\begin{equation*}
    \Big(\int_{2^{j_{k+1}}B}\left | b(y)-b_{2^{j_{k+1}}B} \right |\omega(y)^{1-p'}d\mu(y)\Big)^{\frac{1}{p'} }\lesssim(k+1)\left \| b \right \|_{\rm BMO(X
    )}\omega^{1-p'}(2^{j_{k+1}}B)^{1/p'}.
\end{equation*}
Together with Lemma \ref{lemcomparison}, we have the following
\begin{align*}
    IV&\lesssim\left \| f \right \|_{L_{\omega}^{p,\kappa}(X)}^{p}\left \| b \right \|_{\rm BMO(X)}^{p}\left[\sum_{k=0}^{\infty}\frac{\omega(2^{j_{k+1}}B)^{\frac{\kappa}{p}}}{\mu( 2^{j_{k }}B )}(k+1)\omega^{1-p'}(2^{j_{k+1}}B)^{1/p'}\right]^p\omega(B)^{1-\kappa}\\
    &\lesssim\left \| f \right \|_{L_{\omega}^{p,\kappa}(X)}^{p}\left \| b \right \|_{\rm BMO(X)}^{p}\left[\sum_{k=0}^{\infty}\frac{(k+1)\omega(B)^{\frac{1-\kappa}{p}}}{\omega(2^{j_{k+1}}B)^{\frac{1-\kappa}{p}}}\right]^p\\
    &\lesssim\left \| f \right \|_{L_{\omega}^{p,\kappa}(X)}^{p}\left \| b \right \|_{\rm BMO(X)}^{p}\left[\sum_{k=0}^{\infty}(k+1)2^{-\frac{(k+1)(1-\kappa) \sigma}{p}}\right]^p\\
    &\lesssim\left \| f \right \|_{L_{\omega}^{p,\kappa}(X)}^{p}\left \| b \right \|_{\rm BMO(X)}^{p}.
\end{align*}
Therefore we have
\begin{equation*}
    \left \| [b,T]f_{2} \right \|_{L_{\omega}^{p,\kappa}(X)}\lesssim\left \| f \right \|_{L_{\omega}^{p,\kappa}(X)}\left \| b \right \|_{\rm BMO(X)}.
\end{equation*}
This completes the proof.
\end{proof}
\vspace{0.5 cm}

\subsection{\textbf{Proof of Theorem \ref{thm main1}(ii)}.}

We first recall another version of the homogeneous condition (formulated in \cite{Dgk}): there exist positive constants $3\le A_1\le A_2$ such that for any ball $B:=B(x_0, r)\subset X$, there exist balls $\widetilde B:=B(y_0, r)$
such that $A_1 r\le d(x_0, y_0)\le A_2 r$, 
and  for all $(x,y)\in ( B\times \widetilde{B})$, $K(x, y)$ does not change sign and
\begin{equation}\label{e-assump cz ker low bdd}
|K(x, y)|\gs \frac1{\mu(B)}.
\end{equation}
If the kernel $K(x, y):=K_1(x,y)+i K_2(x,y)$ is complex-valued, where $i^2=-1$,
then at least one of $K_i$ satisfies \eqref{e-assump cz ker low bdd}.

Then we first point out that the homogeneous condition \eqref{lowerbound} implies
\eqref{e-assump cz ker low bdd}.

\begin{lem}[\cite{Dgk}]\label{prop homogeneous}
Let $T$ be the Calder\'on--Zygmund operator as in Definition \ref{def 1} and satisfy the homogeneous condition as in
\eqref{lowerbound}. Then $T$ satisfies \eqref{e-assump cz ker low bdd}.
\end{lem}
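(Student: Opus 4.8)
The plan is a perturbation argument: apply \eqref{lowerbound} once, at the center of the prescribed ball and with a suitably inflated radius, to obtain a single point where $|K|$ is bounded below, and then propagate this lower bound to an entire product of balls using the kernel regularity \eqref{smooth of C-Z-S-I-O} (recall $\beta(t)=t^{\sigma_0}$). So I would fix $B=B(x_0,r)$, let $\Lambda\ge 3$ be a large structural constant — depending only on $A_0$, $n$, $\sigma_0$, $C$, $c_0$, $\bar A$, $C_\mu$, to be fixed at the very end — and apply \eqref{lowerbound} at $x_0$ with radius $\rho:=\Lambda r$. This produces $y_0$ with $\Lambda r\le d(x_0,y_0)<\bar A\Lambda r$ and $|K(x_0,y_0)|\ge\bigl(c_0\mu(B(x_0,\Lambda r))\bigr)^{-1}$. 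Then I set $\widetilde B:=B(y_0,r)$ and take $A_1:=\Lambda$, $A_2:=\bar A\Lambda$; these satisfy $3\le A_1\le A_2$ and $A_1r\le d(x_0,y_0)\le A_2r$, as demanded in \eqref{e-assump cz ker low bdd}.

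Next, for $(x,y)\in B\times\widetilde B$ I would extract from the quasi-triangle inequality \eqref{eqn:quasitriangleineq} — provided $\Lambda$ exceeds a fixed multiple of $A_0^2$ — the comparisons $d(x,y)\approx d(y_0,x)\approx d(x_0,y_0)\approx\Lambda r$, hence, by \eqref{doubling condition} and \eqref{upper dimension}, $V(x,y)\approx V(y_0,x)\approx\mu(B(x_0,\Lambda r))\approx\mu(B)$; in particular $d(x_0,x)<r\le(2A_0)^{-1}d(x_0,y_0)$ and $d(y_0,y)<r\le(2A_0)^{-1}d(y_0,x)$, so \eqref{smooth of C-Z-S-I-O} is applicable in each variable. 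Splitting $K(x_0,y_0)-K(x,y)=\bigl(K(x_0,y_0)-K(x,y_0)\bigr)+\bigl(K(x,y_0)-K(x,y)\bigr)$ and estimating the first bracket via the first-variable form and the second via the second-variable form of \eqref{smooth of C-Z-S-I-O}, using $\beta(t)=t^{\sigma_0}$ together with $V(x_0,y_0)\ge\mu(B(x_0,\Lambda r))$ and $V(y_0,x)\gtrsim\mu(B(x_0,\Lambda r))$ (both comparisons with constants free of $\Lambda$), I would obtain
\[
|K(x_0,y_0)-K(x,y)|\le\frac{C'\,\Lambda^{-\sigma_0}}{\mu(B(x_0,\Lambda r))},
\]
where $C'$ depends only on $A_0,\bar A,n,\sigma_0,C$ and, crucially, not on $\Lambda$.

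Combining, for every $(x,y)\in B\times\widetilde B$ I get
\[
|K(x,y)|\ge|K(x_0,y_0)|-|K(x_0,y_0)-K(x,y)|\ge\Bigl(\tfrac1{c_0}-C'\Lambda^{-\sigma_0}\Bigr)\frac1{\mu(B(x_0,\Lambda r))}.
\]
Fixing $\Lambda$ so large that $C'\Lambda^{-\sigma_0}\le\tfrac1{2c_0}$ (and so that the earlier finitely many lower bounds on $\Lambda$ also hold) then gives $|K(x,y)|\ge\tfrac1{2c_0}\mu(B(x_0,\Lambda r))^{-1}\gtrsim\mu(B)^{-1}$ by \eqref{upper dimension}, which is exactly \eqref{e-assump cz ker low bdd}. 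The same displayed inequality, read for $K(x,y)$ in place of $|K(x,y)|$, shows that $K(x,y)$ keeps the sign of $K(x_0,y_0)$ throughout $B\times\widetilde B$, i.e. $K$ does not change sign there — so connectedness of $X$ is not needed. For a complex-valued kernel $K=K_1+iK_2$, the lower bound on $|K(x_0,y_0)|$ forces $|K_j(x_0,y_0)|\gtrsim\mu(B(x_0,\Lambda r))^{-1}$ for some $j\in\{1,2\}$, and since $K_j$ inherits the regularity \eqref{smooth of C-Z-S-I-O} from $K$, the same argument applies to $K_j$.

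The hard part is not conceptual but is the bookkeeping in the middle step: the absorption in the last display is legitimate only because $C'$ is independent of $\Lambda$, and this in turn hinges on the two one-sided measure comparisons $\mu(B(x_0,\Lambda r))\lesssim V(x_0,y_0)$ and $\mu(B(x_0,\Lambda r))\lesssim V(y_0,x)$ being valid with $\Lambda$-free constants. Once that is checked, everything else is a routine use of the quasi-triangle inequality and the doubling property.
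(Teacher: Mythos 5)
Your argument is correct: the paper itself gives no proof of this lemma (it is quoted from \cite{Dgk}), and your perturbation argument --- applying \eqref{lowerbound} at $x_0$ with the inflated radius $\Lambda r$ and then absorbing the smoothness error from \eqref{smooth of C-Z-S-I-O}, having checked that the measure comparisons $\mu(B(x_0,\Lambda r))\lesssim V(x_0,y_0)$ and $\mu(B(x_0,\Lambda r))\lesssim V(y_0,x)$ hold with $\Lambda$-free constants --- is essentially the standard proof given in that reference, and it also correctly yields the sign-constancy and the admissible constants $A_1=\Lambda$, $A_2=\bar A\Lambda$. The only cosmetic caveat is that in the complex case the component $K_j$ you select may depend on the ball $B$, but that per-ball form is all that \eqref{e-assump cz ker low bdd} asserts and all that its application in the proof of Theorem \ref{thm main1}(ii) requires.
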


\begin{proof}[Proof of Theorem \ref{thm main1}(ii)]
To prove $b\in BMO(X)$, it is sufficient to show for any ball $B\subset X$, we have $M(b,B)\lesssim 1$. Let $B = B(x_0,r)$ be a quasi metric ball in $X$. Also let $\widetilde{B} := B(y_0,r)\subset X$ be the measurable set in (\ref{e-assump cz ker low bdd}).
Following \cite{Dgk}, we take
\begin{equation*}
    E_1 := \{x\in B: b(x)\geq \alpha_{\tilde{B}}(b)\} \quad
    E_2 := \{x\in B: b(x) <  \alpha_{\tilde{B}}(b)\};
\end{equation*}
\begin{equation*}
    F_1 \subset \{y\in \tilde{B}: b(y)\leq \alpha_{\tilde{B}}(b)\} \quad
    F_2 \subset \{y\in \tilde{B}: b(y) \geq \alpha_{\tilde{B}}(b)\},
\end{equation*}
with $\alpha_{\tilde{B}}(b)$ the median value of $b$ over $\tilde B$, such that $\mu(F_1)= \mu(F_2)= \frac{1}{2}\mu(\tilde{B})$ and $F_{1}\cap F_{2} = \emptyset$. For any  $(x,y)\in E_j \times F_j$, $j\in\{1,2\}$, we have

\begin{equation*}
    |b(x)-b(y)| = |b(x) -  \alpha_{\tilde{B}}(b)| + | \alpha_{\tilde{B}}(b)-b(y)| \geq |b(x)- \alpha_{\tilde{B}}(b)|.
\end{equation*}

As b is a real valued, using Lemma \ref{lemcomparison},   H\"older's inequality, and using the boundedness of $[b,T]$ on $L_{\omega}^{p,\kappa}(X)$ and (\ref{e-assump cz ker low bdd}), we get that

\begin{align*}
    M(b,B)&\lesssim\frac{1}{\mu( B )}\int_{B}\left | b(x)-\alpha_{\tilde{B}}(b) \right |d\mu(x)\approx \sum_{j=1}^{2}\frac{1}{\mu(B)}\int_{E_j}|b(x)-\alpha_{\tilde{B}}(b)|d\mu(x)\\
    &\lesssim\sum_{j=1}^{2}\frac{1}{\mu( B)}\int_{E_j}\int_{F_j}\frac{\left | b(x)-\alpha_{\tilde B}(b) \right |}{\mu( B )}d\mu(y)d\mu(x)\\
    &\approx \sum_{j=1}^{2}\frac{1}{\mu( B)}\int_{E_j}\int_{F_j}\frac{\left | b(x)-\alpha_{\tilde B}(b) \right |}{V(x,y)}d\mu(y)d\mu(x)\\
    &\lesssim \sum_{j=1}^{2}\frac{1}{\mu( B )}\int_{E_j}\int_{F_j}\left | b(x)-b(y) \right |\frac{1}{V(x,y)}d\mu(y)d\mu(x)\\
    &\lesssim \sum_{j=1}^{2}\frac{1}{\mu( B )}\int_{E_j}\left | \int_{F_j}\left | b(x)-b(y) \right | K(x,y)d\mu(y)\right |d\mu(x)\\
    &\sim \sum_{j=1}^{2}\frac{1}{\mu( B)}\int_{E_j}\left | [b,T]\chi_{ F_{j}(x)} \right |d\mu(x)\\
    &\lesssim  \sum_{j=1}^{2}\frac{1}{\mu( B )}\int_{E_j}\left | [b,T]\chi_{ F_{j}(x)} \right |d\mu(x)\lesssim\sum_{j=1}^{2}\frac{1}{\mu( B)}\int_{E_j}\left \|  [b,T]\chi_{ F_{j}}\right \|_{L_{\omega}^{p,\kappa}(X)}[\omega(B)]^{\frac{\kappa-1}{p}}\mu(B)\\
    &\lesssim\sum_{j=1}^{2}\left \|  [b,T]\right \|_{L_{\omega}^{p,\kappa}(X)\rightarrow L_{\omega}^{p,\kappa}(X)}\left \| \chi_{ F_{j}} \right \|_{L_{\omega}^{p,\kappa}(X)}[\omega(B)]^\frac{\kappa-1}{p}\\
    &\lesssim\left \|  [b,T]\right \|_{L_{\omega}^{p,\kappa}(X)\rightarrow L_{\omega}^{p,\kappa}(X)}[\omega(\tilde{B})]^\frac{1-\kappa}{p}[\omega(B)]^\frac{\kappa-1}{p}\\
    &\lesssim\left \| [b,T]\right \|_{L_{\omega}^{p,\kappa}(X)\rightarrow L_{\omega}^{p,\kappa}(X)}.
\end{align*}
This completes the proof of Theorem \ref{thm main1}(ii).
\end{proof}

\section{Compactness Characterization of the Commutator}

Now we will prove Theorem \ref{thm main2}.

\subsection{\textbf{Proof of Theorem \ref{thm main2}(i)}.}

Now we will give sufficient conditions for the subsets of weighted Morrey spaces to be relatively compact. We define a subset $\mathcal{F}$ of $L_{\omega}^{p,\kappa}(X)$ to be totally bounded if the $L_{\omega}^{p,\kappa}(X)$ closure of $\mathcal{F}$ is compact.

\begin{lem}\label{lemrelcompact}
Let $p\in(1,\infty), \kappa\in (0,1)$ and $\omega \in A_p(X)$, then a subset $\mathcal{F}$ of $L_{\omega}^{p,\kappa}(X)$ is totally bounded if the set $\mathcal{F}$ satisfies the following three conditions:
\begin{enumerate}
    \item[(i)] $\mathcal{F}$ is bounded, namely,
    \begin{equation*}
        \sup\limits_{f\in\mathcal{F}}\|f\|_{L_{\omega}^{p,\kappa}(X)} < \infty;
    \end{equation*}
    \item[(ii)] $\mathcal{F}$ vanishes uniformly at infinity, namely, for any $\epsilon\in(0,\infty)$, there exists some positive constant $M$ such that, for any $f\in \mathcal{F}$,
    \begin{equation*}
        \|f\chi_{\{x\in X:d(x_0,x)>M\}}\|_{L_{\omega}^{p,\kappa}(X)}<\epsilon,
    \end{equation*}
    where $x_0$ is a fixed point in $X$;
    \item[(iii)] $\mathcal{F}$ is uniformly equicontinuous, namely,
    \begin{equation*}
      \lim_{r\rightarrow 0}\|f(x)-f_{B(x,r)}\|_{L_{\omega}^{p,\kappa}(X)} =0
    \end{equation*}
    uniformly for $f\in\mathcal F$.
    \end{enumerate}
\end{lem}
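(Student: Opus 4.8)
The strategy is to mimic the classical Fréchet–Kolmogorov / Riesz–Kolmogorov compactness criterion, adapting the three hypotheses to the weighted Morrey setting via a reduction to an $L^p$-type argument on balls. Since $L_{\omega}^{p,\kappa}(X)$ is a Banach space, it suffices to show that $\mathcal F$ is totally bounded, i.e. for every $\eps>0$ one can cover $\mathcal F$ by finitely many $\eps$-balls. First I would use (ii) to fix $M$ so that every $f\in\mathcal F$ is, up to $\eps$ in norm, supported in the fixed ball $B_0:=B(x_0,M)$; write $f=f\chi_{B_0}+f\chi_{X\setminus B_0}$ and discard the tail. Then I would use (iii): choose $r>0$ small so that $\|f-f_{B(\cdot,r)}\|_{L_{\omega}^{p,\kappa}(X)}<\eps$ uniformly, where $f_{B(x,r)}=\frac1{\mu(B(x,r))}\int_{B(x,r)}f\,d\mu$ is the averaging (``mollification'') operator. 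So it is enough to show that the family $\mathcal G:=\{A_r(f\chi_{B_0}):f\in\mathcal F\}$, with $A_r g(x):=g_{B(x,r)}$, is totally bounded in $L_{\omega}^{p,\kappa}(X)$.

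**Key steps.** (1) \emph{Reduce Morrey to $L^p$.} For functions supported in the fixed ball $B_0$, I claim $\|g\|_{L_{\omega}^{p,\kappa}(X)}\approx_{B_0,\omega,\kappa}\|g\|_{L^p_\omega(B_0)}$ up to constants depending on $B_0$: the sup over balls $B$ in the Morrey norm is, after splitting into balls of radius $\gtrsim M$ (controlled by $\omega(B_0)^{-\kappa/p}\|g\|_{L^p_\omega}$ using $\mu(X)=\infty$, $\omega$ doubling, and Lemma \ref{lemcomparison} to bound $\omega(B)$ from below) and balls of radius $\lesssim M$ (here one uses that $g=A_r(f\chi_{B_0})$ is genuinely bounded — see step (2) — so the local Morrey contribution over a small ball $B$ is $\lesssim \|g\|_\infty\,\omega(B)^{(1-\kappa)/p}$, which is small if $\kappa<1$... actually it's bounded, which is what we need for equiboundedness, not smallness). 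The upshot is that total boundedness of $\mathcal G$ in $L^p_\omega(B_0)$ combined with a uniform sup bound yields total boundedness in $L_{\omega}^{p,\kappa}(X)$, because on balls larger than a fixed scale the Morrey quotient only improves the estimate and on small balls the $L^\infty$ bound dominates. (2) \emph{Equiboundedness, equicontinuity, uniform boundedness of $\mathcal G$ as functions.} Since $\omega\in A_p(X)$, Hölder's inequality gives $\frac1{\mu(B(x,r))}\int_{B(x,r)}|f|\,d\mu \lesssim \big(\frac1{\omega(B(x,r))}\int_{B(x,r)}|f|^p\omega\,d\mu\big)^{1/p}\cdot\big(\frac{\omega(B(x,r))^{1/p}\,(\intav \omega^{1-p'})^{1/p'}}{\mu(B(x,r))\cdot\text{stuff}}\big)$; unwinding, $\|f_{B(x,r)}\|_\infty\lesssim C(r,\omega)\,\|f\|_{L^p_\omega(B_0)}^{1/p}\cdot(\text{Morrey const})$, so $\mathcal G$ is uniformly bounded in $L^\infty$. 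Likewise the family is uniformly equicontinuous in $x$ (the map $x\mapsto f_{B(x,r)}$ is continuous with modulus controlled via the doubling property and $\mu(\{x_0\})=0$), and it is supported in a fixed slightly-enlarged ball $B_0':=B(x_0, A_0(M+r))$. (3) \emph{Apply Arzelà–Ascoli.} A uniformly bounded, uniformly equicontinuous family of functions supported in the compact-in-metric-sense set $\overline{B_0'}$ (note $X$ need not be locally compact, so one should instead invoke a direct $\eps$-net construction: partition $B_0'$ into finitely many small balls, approximate each $g\in\mathcal G$ by a function constant on each piece, getting a finite-dimensional — hence totally bounded — approximating family in $L^p_\omega(B_0')$). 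Feeding this $L^p_\omega$-net back through step (1) produces a finite $\eps$-net for $\mathcal G$ in $L_{\omega}^{p,\kappa}(X)$, and adding back the errors from (ii) and (iii) gives a finite $3\eps$-net for $\mathcal F$.

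**Main obstacle.** The delicate point is step (1): the Morrey norm is a supremum over \emph{all} balls, so one must genuinely control the contributions of both very large and very small balls, and the passage ``$L^p_\omega$-total-boundedness $\Rightarrow$ Morrey-total-boundedness'' is not formal — it relies essentially on the restriction $\kappa\in(0,1)$ (so $\omega(B)^{1-\kappa}$ is a genuine positive power, killing small balls for the difference of two net elements once they are close in $L^p_\omega$ \emph{and} both bounded in $L^\infty$) and on $\omega\in A_p$ being doubling with a quantitative lower volume bound (Lemma \ref{lemcomparison}, \eqref{doubling constant of weight}) to handle large balls. A secondary technical nuisance is that $X$ is only a space of homogeneous type, not locally compact, so classical Arzelà–Ascoli does not literally apply and one must run the $\eps$-net argument by hand using a Vitali-type finite partition of the fixed large ball into small balls — routine given doubling, but worth stating carefully. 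I would isolate step (1) as a short sublemma (Morrey norm of boundedly-supported bounded functions is comparable to a weighted $L^p$ norm up to scale-dependent constants) and then the rest follows the standard Riesz–Kolmogorov template.
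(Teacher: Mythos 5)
First, note that the paper does not actually prove Lemma \ref{lemrelcompact}: it simply states that the proof ``follows from \cite{MaoSunWu16ActaMathSin} using a small modification'' from the Euclidean setting. So there is no in-paper argument to compare against line by line; your proposal has to be judged on its own. Your overall template (use (ii) to localize to a fixed ball, use (iii) to pass to averages $f_{B(\cdot,r)}$, build a finite net for the localized/averaged family, and transfer back) is indeed the expected Riesz--Kolmogorov adaptation, and your key quantitative observation is correct and is the real crux: if two functions are supported in a fixed ball, uniformly bounded in $L^\infty$, and close in $L^p_\omega$, then they are close in $L^{p,\kappa}_\omega$, by splitting over $\omega(B)\geq t$ versus $\omega(B)<t$ and using $\kappa\in(0,1)$ together with Lemma \ref{lemcomparison}; large balls are killed by $\omega(B)^{-\kappa}$ and small balls by $\omega(B)^{1-\kappa}$. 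Note, however, that the norm equivalence you first assert in step (1), $\|g\|_{L^{p,\kappa}_\omega(X)}\approx\|g\|_{L^p_\omega(B_0)}$ for $g$ supported in $B_0$, is false as stated (the Morrey norm is genuinely stronger on small balls); only the weaker ``difference of two $L^\infty$-bounded net elements'' statement that you actually use is true, so this should be formulated that way from the start.

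The genuine gap is in step (2)--(3): you assert that the family $x\mapsto f_{B(x,r)}$ is ``uniformly equicontinuous in $x$, with modulus controlled via the doubling property,'' and your $\varepsilon$-net by piecewise constants implicitly needs exactly this oscillation control at scales below $r$. On a general space of homogeneous type this is not justified: neither $\mu(B(x,r))$ nor $\mu\bigl(B(x,r)\setminus B(y,r)\bigr)$ varies continuously with the center. With $A_0>1$ one only gets $B(x,r)\setminus B(y,r)\subset\{z: r\le d(y,z)<A_0(r+d(x,y))\}$, an annulus whose measure does not tend to $0$ as $d(x,y)\to 0$; even when $A_0=1$, spheres may carry positive measure, and the standing assumptions ($\mu(X)=\infty$, $\mu(\{x\})=0$, doubling) do not exclude this. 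So the Arzel\`a--Ascoli-flavoured step, as written, can fail. The standard repair is to avoid any continuity of the averaging map: cover $B(x_0,M)$ by finitely many balls $B(x_i,r)$ (geometric doubling), disjointify them into pieces $S_i$, and approximate $f\chi_{B(x_0,M)}$ by $\sum_i f_{B(x_i,r)}\chi_{S_i}$, estimating the error on each $S_i$ by $|f(x)-f_{B(x,Cr)}|$ plus an average of $|f-f_{B(x,Cr)}|$ over $B(x,Cr)$ (choosing $C$ by the quasi-triangle inequality so that $B(x_i,r)\subset B(x,Cr)$), which is controlled by hypothesis (iii) together with the boundedness of the Hardy--Littlewood maximal operator on $L^{p,\kappa}_\omega(X)$ (Lemma \ref{lemmaximal}); the coefficients $f_{B(x_i,r)}$ are uniformly bounded by your H\"older/$A_p$ computation, so the approximating family lies in a bounded subset of a finite-dimensional space. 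The same maximal-function bound also cleans up your truncate-then-average ordering issue. With these modifications your argument closes; as proposed, the equicontinuity claim is the step that would not survive in this generality.
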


The proof for the lemma above, follows from \cite{MaoSunWu16ActaMathSin} using a small modification from Euclidean setting to space of homogeneous type, this only requires following properties of the underlying space: metric on space and doubling measure.

We will now show the boundedness of the maximal operator $T_*$ of a family of smooth truncated operators $\{T_{\eta}\}_{\eta\in(0,\infty)}$ as follows. For $\eta\in(0,\infty)$, we take

\begin{equation*}
    T_{\eta}f(x) := \int_{X}K_{\eta}(x,y)f(y)d\mu(y),
\end{equation*}
where the kernel $K_{\eta} := K(x,y)\varphi(\frac{d(x,y)}{\eta})$ with $\varphi\in C^{\infty}(\mathbf{R})$ and $\varphi$ satisfies the following
\[
\varphi(t)=\left\{\begin{array}{ll}

\varphi(t) \equiv 0 & \text { if } t \in\left(-\infty, \frac{1}{2}\right) \\
\varphi(t) \in[0,1], & \text { if } t \in\left[\frac{1}{2}, 1\right] \\
\varphi(t) \equiv 1, & \text { if } t \in(1, \infty).
\end{array}\right.
\]
Let
\[
\left[b, T_{\eta}\right] f(x):=\int_{X}[b(x)-b(y)] K_{\eta}(x, y) f(y) d\mu(y).
\]
The maximal operator $T_*$ is defined as below
\[
T_{*} f(x):=\sup _{\eta \in(0, \infty)}\left|\int_{X} K_{\eta}(x, y) f(y) d\mu(y)\right|.
\]
Observe that the $\emph{Hardy-Littlewood maximal Operator}$ $\mathcal{M}$ is given as
\[
\mathcal{M} f(x):=\sup _{B \ni x} \frac{1}{\mu(B)} \int_{B}|f(y)| d\mu(y)
\]
for any $f \in L_{\mathrm{loc}}^{1}\left(X\right)$ and $x \in X$, here we take the supremum over all quasi-metric  balls $B$ of $X$ that contain $x$.



Then we have the following lemmas.
\begin{lem}\label{lembound}
There exists a positive constant $C$ such that we have, for any $b\in $Lip$(\beta)$, $0<\beta<\infty$, $f \in
L_{\operatorname{loc}}^{1}\left(X\right)$ and $x \in X$
$$
\left|\left[b, T_{\eta}\right] f(x)-[b, T] f(x)\right| \leq C\eta^{\beta} \mathcal{M} f(x).
$$
\end{lem}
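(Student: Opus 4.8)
The plan is to estimate the difference $[b,T_\eta]f(x)-[b,T]f(x)$ directly from the integral representations. Writing out both expressions, the difference is
\[
[b,T_\eta]f(x)-[b,T]f(x)=\int_X [b(x)-b(y)]\,\bigl(K_\eta(x,y)-K(x,y)\bigr) f(y)\,d\mu(y),
\]
and since $K_\eta(x,y)-K(x,y)=K(x,y)\bigl(\varphi(d(x,y)/\eta)-1\bigr)$, the factor $\varphi(d(x,y)/\eta)-1$ vanishes when $d(x,y)\ge \eta$ and is bounded by $1$ in absolute value everywhere; hence the integrand is supported in the ball $B(x,\eta)$. So the whole difference is controlled by
\[
\int_{d(x,y)<\eta} |b(x)-b(y)|\,|K(x,y)|\,|f(y)|\,d\mu(y).
\]

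Next I would plug in the two available pointwise bounds. From the Lipschitz hypothesis $|b(x)-b(y)|\le \|b\|_\beta\, d(x,y)^\beta$, and from the size condition \eqref{size of C-Z-S-I-O} on the Calder\'on--Zygmund kernel, $|K(x,y)|\le C/V(x,y)=C/\mu(B(x,d(x,y)))$. Combining, the integrand over $B(x,\eta)$ is at most $C\|b\|_\beta\, d(x,y)^\beta\,|f(y)|/\mu(B(x,d(x,y)))$. On the region $d(x,y)<\eta$ we have $d(x,y)^\beta<\eta^\beta$, so it remains to bound
\[
\eta^\beta \int_{d(x,y)<\eta} \frac{|f(y)|}{\mu(B(x,d(x,y)))}\,d\mu(y).
\]

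The standard device here is to chop the ball $B(x,\eta)$ into dyadic annuli $A_k=\{y: 2^{-k-1}\eta\le d(x,y)<2^{-k}\eta\}$ for $k\ge 0$. On $A_k$ we have $\mu(B(x,d(x,y)))\ge \mu(B(x,2^{-k-1}\eta))$, and by doubling $\mu(B(x,2^{-k}\eta))\le C_\mu \mu(B(x,2^{-k-1}\eta))$, so $\int_{A_k} |f(y)|/\mu(B(x,d(x,y)))\,d\mu(y)\lesssim \mu(B(x,2^{-k}\eta))^{-1}\int_{B(x,2^{-k}\eta)}|f(y)|\,d\mu(y)\lesssim \mathcal M f(x)$. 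Summing over $k\ge 0$ would give a divergent geometric series, so instead one should keep the $d(x,y)^\beta$ factor inside the sum rather than crudely bounding it by $\eta^\beta$: on $A_k$, $d(x,y)^\beta< (2^{-k}\eta)^\beta=2^{-k\beta}\eta^\beta$, and therefore
\[
\int_{A_k} d(x,y)^\beta\frac{|f(y)|}{\mu(B(x,d(x,y)))}\,d\mu(y)\lesssim 2^{-k\beta}\eta^\beta\,\mathcal M f(x),
\]
and now $\sum_{k\ge 0} 2^{-k\beta}=(1-2^{-\beta})^{-1}<\infty$ since $\beta>0$. This yields the claimed bound $|[b,T_\eta]f(x)-[b,T]f(x)|\le C\eta^\beta \mathcal M f(x)$ with $C$ depending only on the doubling constant, $\sigma_0$ (through the CZ constants), $\beta$, and $\|b\|_\beta$.

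The only genuine subtlety is bookkeeping: one must make sure the annular decomposition covers exactly $\{y: 0<d(x,y)<\eta\}$ (the point $y=x$ is $\mu$-null by assumption, so it is harmless), and that the doubling inequality \eqref{upper dimension} or \eqref{doubling condition} is invoked correctly to pass from $\mu(B(x,2^{-k-1}\eta))$ in the denominator to $\mu(B(x,2^{-k}\eta))$ in the Hardy--Littlewood average. There is no analytic obstacle here; the proof is a routine size estimate. One should also note in passing that the integral defining $[b,T]f(x)$ makes sense pointwise for $f\in L^1_{\rm loc}(X)$ precisely because the same annular estimate shows absolute convergence near the diagonal, and the tail is handled by the size bound of $K$ together with local integrability of $f$, but since the statement only asserts the pointwise inequality, it suffices to carry out the estimate above.
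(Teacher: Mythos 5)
Your proposal is correct and follows essentially the same route as the paper: reduce the difference to the local integral $\int_{d(x,y)\le \eta}|b(x)-b(y)|\,|K(x,y)|\,|f(y)|\,d\mu(y)$, then use the Lipschitz bound, the kernel size estimate, and a dyadic annular decomposition of $B(x,\eta)$ with the $2^{-k\beta}$ gain summing to a finite geometric series. The only difference is that you spell out explicitly (correctly) why the $d(x,y)^\beta$ factor must be kept inside the sum, a point the paper leaves implicit.
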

\begin{proof}
Let $f \in L_{\text {loc}}^{1}\left(X\right)$. Now for any $x \in X$, we get
\begin{align*}
    &| [b, T_{\eta}]f(x)-[b, T] f(x)| \\
&\quad =\left|\int_{\eta / 2<d(x, y) \leq \eta}[b(x)-b(y)] K_{\eta}(x, y) f(y) d\mu(y)-\int_{d(x, y) \leq \eta}[b(x)-
b(y)] K(x, y) f(y) d\mu(y)\right| \\
&\quad \lesssim \int_{d(x, y) \leq \eta}|b(x)-b(y)||K(x, y)||f(y)| d\mu(y).
\end{align*}
Since $b\in $Lip$(\beta)$ and (\ref{size of C-Z-S-I-O}), we have that
\begin{align*}
   &\int_{d(x, y) \leq \eta}|b(x)-b(y)||K(x, y)||f(y)| d\mu(y)\\
&\quad \lesssim C\sum_{j=0}^{\infty}\int_{\eta2^{-(j+1)}<d(x,y)\leq \eta2^{-j}}\frac{d(x,y)^{\beta}}{V(x,y)}|f(y)|d\mu(y)\\
&\quad \lesssim C\eta^{\beta} \mathcal{M}f(x),
\end{align*}
this completes the proof of the Lemma \ref{lembound}.
\end{proof}
\begin{lem}\label{lemmaximal}
Let $p \in(1, \infty), \kappa \in(0,1)$ and $\omega \in A_{p}\left(X\right) .$ Then there exists a positive constant $C$ such that, for any $f \in L_{\omega}^{p, \kappa}\left(X\right)$,
\begin{equation*}
    \|T_{*}\|_{ L_{\omega}^{p, \kappa}\left(X\right)}+ \|\mathcal{M}f\|_{ L_{\omega}^{p, \kappa}\left(X\right)}\leq C\|f\|_{ L_{\omega}^{p, \kappa}\left(X\right)}.
\end{equation*}
\end{lem}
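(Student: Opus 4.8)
\textbf{Proof proposal for Lemma \ref{lemmaximal}.}

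The plan is to prove the two estimates separately, in each case reducing the weighted Morrey bound to the known $A_p$-weighted $L^p$ bound for the relevant maximal operator and then summing a geometric series over the standard annular decomposition. First I would recall that since $\omega\in A_p(X)$, the Hardy--Littlewood maximal operator $\mathcal M$ is bounded on $L^p(\omega\,d\mu)$ and, because $\{T_\eta\}$ are smooth truncations of a Calder\'on--Zygmund kernel, the maximal truncated operator $T_*$ is pointwise dominated by $C(\mathcal M(Tf)+\mathcal Mf)$ (Cotlar-type inequality) or, more directly in this setting, $T_*$ itself is of weak type $(1,1)$ and bounded on $L^p(\omega\,d\mu)$ by the Calder\'on--Zygmund theory on spaces of homogeneous type together with the $A_p$-extrapolation/weighted theory already invoked in Lemma \ref{lem1}. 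Thus it suffices to prove: if $S$ is bounded on $L^p_\omega(X)$ and of weak type $(1,1)$ with respect to $\mu$ (so that it localizes well on balls), then $S$ is bounded on $L^{p,\kappa}_\omega(X)$; here $S=\mathcal M$ or $S=T_*$.

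The core argument: fix a ball $B=B(x_0,r)$, write $f=f\chi_{2B}+f\chi_{X\setminus 2B}=:f_1+f_2$, and estimate $\|(Sf)\chi_B\|$ in the $L^p(\omega\,d\mu)$ norm. For $f_1$ I would use the $L^p_\omega$ boundedness of $S$ directly:
\begin{align*}
\frac1{\omega(B)^\kappa}\int_B |Sf_1(x)|^p\omega(x)\,d\mu(x)
&\le \frac1{\omega(B)^\kappa}\int_X |Sf_1(x)|^p\omega(x)\,d\mu(x)\\
&\lesssim \frac1{\omega(B)^\kappa}\int_{2B}|f(x)|^p\omega(x)\,d\mu(x)
\lesssim \|f\|_{L^{p,\kappa}_\omega(X)}^p,
\end{align*}
using $\omega(2B)\lesssim\omega(B)$ from \eqref{doubling constant of weight} in the last step. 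For $f_2$, for $x\in B$ both $\mathcal Mf_2(x)$ and $T_*f_2(x)$ are controlled by $\sum_{k\ge0}\frac1{\mu(2^{j_{k+1}}B)}\int_{2^{j_{k+1}}B}|f(y)|\,d\mu(y)$ (for $T_*$ using the size bound \eqref{size of C-Z-S-I-O} exactly as in the estimate of term $II$ in the proof of Theorem \ref{thm main1}(i), with the annuli chosen so that $\mu(2^{j_{k+1}}B)\ge2\mu(2^{j_k}B)$). Applying H\"older's inequality with exponents $p,p'$ and the $A_p$ condition $\big(\intav_{B'}\omega\big)\big(\intav_{B'}\omega^{1-p'}\big)^{p-1}\le[\omega]_{A_p}$ on each ball $B'=2^{j_{k+1}}B$ turns each term into $\lesssim \omega(2^{j_{k+1}}B)^{-1/p}\big(\int_{2^{j_{k+1}}B}|f|^p\omega\big)^{1/p}\le \omega(2^{j_{k+1}}B)^{(\kappa-1)/p}\|f\|_{L^{p,\kappa}_\omega(X)}$; then Lemma \ref{lemcomparison} gives $\omega(B)/\omega(2^{j_{k+1}}B)\lesssim 2^{-k\sigma}$, so after multiplying by $\omega(B)^{(1-\kappa)/p}$ the sum $\sum_{k\ge0}2^{-k\sigma(1-\kappa)/p}$ converges. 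Combining the $f_1$ and $f_2$ contributions and taking the supremum over $B$ yields the claim.

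The main obstacle is making sure the ``off-diagonal'' pointwise control of $T_*f_2(x)$ for $x\in B$ is genuinely uniform in the truncation parameter $\eta$ — i.e. that for $x\in B$ and $y\notin 2B$ one has $|K_\eta(x,y)|\lesssim 1/V(x_0,y)$ regardless of $\eta$, which follows since $|\varphi|\le1$ and $V(x,y)\approx V(x_0,y)$ when $d(x,y)\gtrsim r$ and $x\in B$; this lets the annular sum be performed once, independently of $\eta$, before taking the supremum. The other point requiring a line of justification is the global $L^p_\omega$ boundedness of $T_*$ itself: this is standard Calder\'on--Zygmund theory on spaces of homogeneous type (the smooth truncations $T_\eta$ have uniformly Calder\'on--Zygmund kernels and $T_*$ is weak $(1,1)$, hence bounded on $L^p_\omega$ for $\omega\in A_p$), and everything else is the geometric-series bookkeeping already rehearsed in Section 3.
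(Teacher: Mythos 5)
Your proposal is correct and follows essentially the same route as the paper: the same $f=f\chi_{2B}+f\chi_{X\setminus2B}$ splitting, the weighted $L^p_\omega$ boundedness of $T_*$ (which the paper cites from \cite[Theorem 1.1]{Gxq}) for the local part, and the kernel size bound plus H\"older, the $A_p$ condition and Lemma \ref{lemcomparison} to sum the annular tail geometrically. The only cosmetic difference is that the paper disposes of $\mathcal M$ by citing \cite{HT} rather than running the same splitting argument for it, and your remark on the $\eta$-uniformity of $|K_\eta|\lesssim 1/V(x_0,y)$ is a correct (implicit in the paper) justification.
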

\begin{proof} To show the boundedness of $\mathcal{M}$ on $L_{\omega}^{p, \kappa}\left(X\right)$ one can refer to \cite{HT}. We will now only consider the boundedness of $ {T}_{*}$. For any fixed quasi-metric ball $B \subset X$ and $f \in L_{\omega}^{p, \kappa}\left(X\right),$ we write the following
\[
f:=f_{1}+f_{2}:=f \chi_{2 B}+f \chi_{X \backslash 2 B}.
\]
Note that  $\lim\limits_{k\to \infty}\mu(2^kB)=\infty$. Then there exist $j_k\in \mathbb{N}$ such that
$$\mu(2^{j_1}B)\geq 2\mu(B)\  {\rm and }\ \mu(2^{j_{k+1}}B)\geq 2\mu(2^{j_k}B).$$ 
Observe that $f_{1} \in L_{\omega}^{p}\left(X\right) .$ Then using the boundedness of $ {T}_{*}$ on $L_{\omega}^{p}\left(X\right)$ (see, for example, \cite[Theorem 1.1]{Gxq} ) and from the H\"{o}lder inequality, also using size and smoothness of Kernel, we have that
\begin{align*}
   & {\left[\int_{B}\left|T_{*} f(x)\right|^{p} \omega(x) d\mu(x)\right]^{\frac{1}{p}}} \\
&\quad \lesssim\left[\int_{B}\left|T_{*} f_{1}(x)\right|^{p} \omega(x) d\mu(x)\right]^{\frac{1}{p}}+\sum_{k=0}^{\infty}\left\{\int_{B}\left[\int_{2^{j_{k+1}} B \backslash 2^{j_{k }} B} \frac{|f(y)|}{V(x,y)} d\mu(y)\right]^{p} \omega(x) d\mu(x)\right\}^{\frac{1}{p}} \\
&\quad \lesssim\left[\int_{2 B}|f(x)|^{p} \omega(x) d\mu(x)\right]^{\frac{1}{p}}+\sum_{k=0}^{\infty}\left[\frac{\omega(B)}{\mu(2^{j_{k }} B)^{p}}\left\{\int_{2^{j_{k+1}} B}|f(y)|[\omega(y)]^{\frac{1}{p}}[\omega(y)]^{-\frac{1}{p}} d\mu( y)\right\}^{p}\right]^{\frac{1}{p}}\\
&\quad \lesssim\|f\|_{L_{\omega}^{p, \kappa}\left(X\right)}[\omega(B)]^{\frac{\kappa}{p}}+\sum_{k=0}^{\infty}\left\{\omega(B)\left[\omega\left(2^{j_{k }} B\right)\right]^{\kappa-1}\|f\|_{L_{\omega}^{p, \kappa}\left(X\right)}^{p}\right\}^{\frac{1}{p}}\\
&\quad \lesssim\|f\|_{L_{\omega}^{p, \kappa}\left(X\right)} \omega(B)^{\frac{\kappa}{p}}+\sum_{k=0}^{\infty}\left\{[\omega(B)]^{\kappa} 2^{-k \sigma(1-\kappa )}\|f\|_{L_{\omega}^{p, \kappa}\left(X\right)}^{p}\right\}^{\frac{1}{p}}\\
&\quad\lesssim\|f\|_{L_{\omega}^{p, \kappa}\left(X\right)}[\omega(B)]^{\frac{\kappa}{p}},
\end{align*}
in the fourth inequality above, we have used Lemma \ref{lemcomparison} for some $\sigma\in(0,1)$. This completes the proof of Lemma \ref{lemmaximal}.
\end{proof}

\begin{proof}[Proof of Theorem $\ref{thm main2}(i)$ ] When $b \in \mathrm{VMO}\left(X\right),$ then for any $\varepsilon \in(0, \infty),$ there exists $b^{(\varepsilon)} \in$ ${\rm Lip}_{c}(\beta), 0<\beta<\infty$ such that we have $\left\|b-b^{(\varepsilon)}\right\|_{\mathrm{BMO}\left(X\right)}<\varepsilon$. Then, using the boundedness of the commutator $[b,T]$ on $L_{\omega}^{p, \kappa}\left(X\right)$, we obtain

\[
\begin{aligned}
\left\|[b, T] f- [b^{(\varepsilon)}, T] f\right\|_{L_{\omega}^{p, \kappa}\left(X\right)} &=\left\|[b-b^{(\varepsilon)}, T] f\right\|_{L_{\omega}^{p, \kappa}\left(X\right)} \\
& \lesssim\left\|b-b^{(\varepsilon)}\right\|_{\mathrm{BMO}\left(X\right)}\|f\|_{L_{\omega}^{p, \kappa}\left(X\right)} \\
&\lesssim\varepsilon\|f\|_{L_{\omega}^{p, \kappa}\left(X\right)}.
\end{aligned}
\]
 Also using Lemmas \ref{lembound}  and \ref{lemmaximal}, we have the following

$$
\lim _{\eta \rightarrow 0}\left\|\left[b, T_{\eta}\right]-[b,T]\right\|_{L_{\omega}^{p, \kappa}\left(X\right) \rightarrow L_{\omega}^{p, \kappa}\left(X\right)}=0.
$$
It sufficient to show that, for any $b \in  $${\rm Lip}_{c}(\beta), 0<\beta<\infty$ and $\eta \in(0, \infty)$ small enough, $\left[b, T_{\eta}\right]$ is a compact operator on $L_{\omega}^{p, \kappa}\left(X\right),$ this is equivalent to showing that, for any bounded subset $\mathcal{F} \subset L_{\omega}^{p, \kappa}\left(X\right),\left[b, T_{\eta}\right] \mathcal{F}$ is relatively compact. Which means, we need to show that $\left[b, T_{\eta}\right]$
satisfies the conditions (i) through (iii) of Lemma \ref{lemrelcompact}.

Observe by \cite[Theorem  3.4]{KS} and using the fact that  $b \in \mathrm{BMO}\left(X\right)$, we have that $\left[b, T_{\eta}\right]$ is bounded on $L_{\omega}^{p, \kappa}\left(X\right)$ for the given $p \in(1, \infty), \kappa \in(0,1)$ and $\omega \in A_{p}\left(X\right)$, this shows that $\left[b, T_{\eta}\right] \mathcal{F}$ satisfies condition (i) of Lemma \ref{lemrelcompact}.

Now, let $x_0$ be a fixed point in $X$. Since $b \in  $${\rm Lip}_{c}(\beta),  $ we can further assume that $\|b\|_{L^{\infty}}= 1$. Recall that there exists a positive constant $R_{0}$ such that supp $(b) \subset B\left(x_0, R_{0}\right)$. Let $M \in\left(10 R_{0}, \infty\right) .$ Thus, for any $y \in B\left(x_0, R_{0}\right)$ and $x \in X$ with $d(x_0, x)>M,$
$d(x, y) \sim d(x_0, x)$. Then, for $x \in X$ with $d(x_0, x)>M,$ by H\"{o}lder inequality  and using that $V(x,y)\sim \mu(B(x,d(x,y)))$ we deduce that
\begin{align*}
  \left|\left[b, T_{\eta}\right] f(x)\right| & \leq \int_{X}|b(x)-b(y)|\left|K_{\eta}(x, y)\right||f(y)| d\mu(y) \\
 & \leq \int_{X}|b(y)|\left|K(x, y)\right||f(y)| d\mu(y) \\
& \lesssim  \int_{B\left(x_0, R_{0}\right)} \frac{|f(y)|}{V(x,y)} d\mu(y) \\
& \lesssim \int_{B\left(x_0, R_{0}\right)} \frac{|f(y)|}{\mu(B(x_0,d(x,x_0))} d\mu(y) \\
& \lesssim \frac{1}{\mu(B(x_0,d(x,x_0))} \left[\int_{B\left(x_0, R_{0}\right)}|f(y)|^{p} \omega(y) d\mu(y) \right]^{\frac{1}{p}}\left\{\int_{B\left(x_0, R_{0}\right)}[\omega(y)]^{-\frac{p^{\prime}}{p}} d\mu(y) \right\}^{\frac{1}{p'}} \\
& \lesssim \frac{\mu(B\left(x_0, R_{0}\right))}{\mu(B(x_0,d(x,x_0))}\left[\omega\left(B\left(x_0, R_{0}\right)\right)\right]^{\frac{\kappa-1}{p}} \|f\|_{L_{\omega}^{p, \kappa}\left(X\right)}.
\end{align*}

From $\lim\limits_{k\to \infty}\mu(B(x_0,kM))=\infty$, we have that there exist $j_k\in \mathbb{N}$ such that
$$\mu(B(x_0,2^{ j_1}M))\geq 2\mu(B(x_0,M))\  {\rm and }\ \mu(B(x_0,2^{ j_{k+1}}M))\geq 2\mu(B(x_0,2^{ j_{k}}M)).$$
Hence, for any fixed ball $B:=B(\widetilde{x}, \widetilde{r}) \subset X$, by Lemma \ref{lemcomparison} , we get that
\begin{align*}
&\frac{1}{[\omega(B)]^\kappa}\int_{B\cap \{x\in X:d(x_0, x)>M\}}|[b,T_{\eta}]f(x)|^p \omega(x)d\mu(x)\\
&\lesssim \mu(B\left(x_0, R_{0}\right))^p {\left[\omega\left(B\left(x_0, R_{0}\right)\right)\right]^{ (\kappa-1) }\over [\omega(B)]^\kappa }\|f\|_{L_{\omega}^{p, \kappa}\left(X\right)}  \sum_{k=0}^{\infty} \frac{\omega\left(B \cap\left\{x \in X: 2^{ j_{k }} M<d(x_0, x) \leq 2^{ j_{k+1}} M\right\}\right)}{\mu(B(x_0,2^{ j_{k }} M))^p  }\\
&\lesssim  \|f\|_{L_{\omega}^{p, \kappa}\left(X\right)}  \sum_{k=0}^{\infty} \frac{\omega\left(B (x_0, 2^{ j_{k+1}} M)\right)^{1-\kappa}}{\omega\left(B\left(x_0, R_{0}\right)\right)^{1-\kappa} } \frac{\mu(B\left(x_0, R_{0}\right))^p  }{\mu(B(x_0,2^{ j_{k }} M))^p  }\\
&\lesssim \|f\|_{L_{\omega}^{p,\kappa}}  \sum_{k=0}^{\infty} \frac{\mu(B\left(x_0, R_{0}\right))^{p\kappa}  }{\mu(B(x_0,2^{ j_{k }} M))^{p\kappa} }\\
&\lesssim \|f\|_{L_{\omega}^{p,\kappa}} \sum_{k=0}^{\infty}2^{- k}  \frac{\mu(B\left(x_0, R_{0}\right))^{p\kappa}  }{\mu(B(x_0, M))^{p\kappa} }\\
&\lesssim  \frac{\mu(B\left(x_0, R_{0}\right))^{p\kappa}  }{\mu(B(x_0, M))^{p\kappa} }\|f\|_{L_{\omega}^{p,\kappa}\left(X\right)}^p.
\end{align*}
Therefore the condition $(ii)$ of Lemma \ref{lemrelcompact} holds for $[b,T_{\eta}]\mathcal{F}$ with large $M$.

Now we will prove  $[b,T_{\eta}]\mathcal{F}$ also satisfies $(iii)$ of Lemma \ref{lemrelcompact}.
Let $\eta$ be a fixed positive constant small enough and $r<{\eta \over 8A_0^2}$. Now, for any $x \in X$, we have
\begin{align*}
[b, T_{\eta}]f(x)-\left([b,T_{\eta}]f \right)_{B(x,r)}
&={1\over \mu(B(x,r))}\int_{B(x,r)} [b, T_{\eta}]f(x)-[b, T_{\eta}]f(y) d\mu(y).
\end{align*}
Note that
\begin{align*}
&{\left[b, T_{\eta}\right] f(x)-\left[b, T_{\eta}\right] f(y)} \\
&\quad=[b(x)-b(y] \int_{X} K_{\eta}(x, z) f(z) d\mu(z) +\int_{X}\left[K_{\eta}(x, z)-K_{\eta}(y, z)\right][b(y)-b(z)] f(z) d\mu(z)\\
&\quad =: L_{1}(x,y)+L_{2}(x,y).
\end{align*}
As $b \in  $${\rm Lip}_{c}(\beta),  $  it follows that, for any $y \in B(x,r)$
\[
\left|L_{1}(x,y)\right|=|b(x)-b(y)|\left|\int_{X} K_{\eta}(x, z) f(z) d\mu(z)\right| \lesssim r^{\beta} {T}_{*}(f)(x).
\]

To estimate $L_{2}(x,y),$ we first recall that $K_{\eta}(x, z)=0, K_{\eta}(y, z)=0$ for
any $y \in B(x,r)$,  $d(x, z) \leq {\eta \over 4A_0}$ and $r<{\eta \over 8A_0^2}$.
Using the definition of $K_{\eta} $ we have that, for any $y \in B(x,r)$, $d(x, z) > {\eta \over 4A_0}$ and $r<{\eta \over 8A_0^2}$,

\[
\left|K_{\eta}(x, z)-K_{\eta}(y, z)\right| \lesssim \frac{1 }{V(x,z)}{d(x,y)^{\sigma_0}\over d(x,z)^{\sigma_0}}.
\]
Hence this implies, for any $y \in B(x,r)$
\[
\begin{aligned}
\left|L_{2}(x,y)\right| & \lesssim    \int_{d(x, z)>{\eta \over 4A_0}} \frac{|f(z)|}
{V(x, z)} {d(x,y)^{\sigma_0}\over d(x,z)^{\sigma_0}}d\mu(z) \\
& \lesssim \sum_{k=0}^{\infty} \frac{r^{\sigma_0}}{\left(2^{k} \eta\right)^{\sigma_0}}{1\over \mu(B(x,{2^k\eta \over 4A_0}))} \int_{{2^k\eta \over 4A_0}<d(x, z) \leq {2^{k+1}\eta \over 4A_0}}|f(z)| d\mu(z) \\
& \lesssim   \frac{r^{\sigma_0}}{\eta^{\sigma_0}}\mathcal{M} f(x).
\end{aligned}
\]
Using the estimates of  $L_1(x,y)$ and $L_2(x,y)$, we have
$$\Big|[b, T_{\eta}]f(x)-\left([b,T_{\eta}]f \right)_{B(x,r)}\Big|\lesssim  r^{\beta} {T}_{*}(f)(x)+\frac{r^{\sigma_0}}{\eta^{\sigma_0}}\mathcal{M} f(x).$$
Then, using Lemma \ref{lemmaximal} and  the boundedness of $\mathcal{M}$ on $L_{\omega}^{p,\kappa}\left(X\right),$ we obtain

\begin{equation*}
    \|[b, T_{\eta}]f(x)-\left([b,T_{\eta}]f \right)_{B(x,r)}\|_{L_{\omega}^{p,\kappa}}\lesssim  (r^{\beta}+\frac{r^{\sigma_0}}{\eta^{\sigma_0}})\|f\|_{L_{\omega}^{p,\kappa}}.
\end{equation*}
Hence we observe that, $\left[b, T_{n}\right] \mathcal{F}$ satisfies condition $(iii)$ of Lemma \ref{lemrelcompact}. 
So we have that,
$\left[b, T_{n}\right]$ is a compact operator for any $b \in  $${\rm Lip}_{c}(\beta) $. This completes the proof of Theorem \ref{thm main2}(i).
\end{proof}

\subsection{\textbf{Proof of Theorem \ref{thm main2}(ii)}.}


Next, we establish a lemma for the upper and the lower bounds of integrals of $[b, T] f_{j}$ on
certain balls $B_{j}$ in $X$ for any $j \in \mathbb{N}$.

\begin{lem}\label{lemsequence}
Let $p \in(1, \infty), \kappa \in(0,1)$ and $\omega \in A_{p}\left(X\right)$. Suppose that $b \in \mathrm{BMO}\left(X\right)$ is a real-valued function with $\|b\|_{\mathrm{BMO}\left(X\right)}=1$ and there exists
$\gamma \in(0, \infty)$ and a sequence $\left\{B_{j}\right\}_{j \in \mathrm{N}}:=\left\{B\left(x_{j},
r_{j}\right)\right\}_{j \in \mathrm{N}}$ of balls in $X$, with $\left\{x_{j}\right\}_{j \in
\mathrm{N}} \subset X$ and $\left\{r_{j}\right\}_{j \in \mathrm{N}} \subset(0,\infty)$ such that, for any $j \in \mathbb{N}$
\begin{equation}\label{mdel}
M\left(b, B_{j}\right)>\gamma.
\end{equation}
Then there exist real-valued functions $\left\{f_{j}\right\}_{j \in \mathrm{N}} \subset L_{\omega}^{p,\kappa}\left(X\right),$ positive constants $K_{0}$ large enough, $\widetilde{C}_{0}, \widetilde{C}_{1}$ and $\widetilde{C}_{2}$ such that, for any $j \in
\mathbb{N}$ and integer $k \geq K_{0},\left\|f_{j}\right\|_{L_{\omega}^{p,\kappa}\left(X\right)} \leq \bar{C}_{0}$,

\begin{equation}\label{communorm}
\int_{B_{j}^{k}} |\left[b, T] f_{j}(x)\right|^{p} \omega(x) d\mu(x) \geq \tilde{C}_{1}  \frac{  \gamma^{p}\mu( B_j)^p}{\mu(A_2^kB_j)^p} \left[\omega\left(B_{j}\right)\right]^{\kappa-1} \omega\left(A_2^{k} B_{j}\right),
\end{equation}
where $B_{j}^{k}:= \widetilde{A_2^{k-1} B_{j}}$ is the ball associates with $A_2^{k-1} B_{j}$ in
(\ref{e-assump cz ker low bdd}) and

\begin{equation}\label{commuassociated}
\int_{A_2^{k+1} B_{j} \setminus A_2^{k} B_{j}} |\left[b, T] f_{j}(x)\right|^{p}
\omega(x) d\mu(x) \leq \tilde{C}_{2}  \frac{  \mu( B_j)^p }{\mu(A_2^kB_j)^p } \left[\omega\left(B_{j}\right)\right]^{\kappa-1} \omega\left(A_2^{k}
B_{j}\right).
\end{equation}
\end{lem}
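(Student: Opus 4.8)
The plan is to imitate the classical construction of test functions that detect the BMO seminorm through the commutator, adapted to the weighted Morrey setting and to spaces of homogeneous type. Fix $j\in\mathbb N$, write $B_j=B(x_j,r_j)$, and let $\widetilde{B_j}=B(y_j,r_j)$ be the associated ball from \eqref{e-assump cz ker low bdd}, so that $A_1 r_j\le d(x_j,y_j)\le A_2 r_j$, $K$ does not change sign on $B_j\times\widetilde{B_j}$, and $|K(x,y)|\gtrsim 1/\mu(B_j)$ there. Using the median value $\alpha_{B_j}(b)$ and \eqref{greater}--\eqref{lesser}, split $\widetilde{B_j}$ (or rather $B_j$, then transport via the non-degeneracy geometry) into two sets of comparable measure on which $b-\alpha_{B_j}(b)$ has a fixed sign, and on which, by \eqref{Msim} together with \eqref{mdel}, the oscillation is genuinely large, i.e. $\int|b-\alpha_{B_j}(b)|\gtrsim \gamma\mu(B_j)$ on at least one of them. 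Define $f_j$ to be (a normalized multiple of) $\mathrm{sgn}(b-\alpha_{B_j}(b))\,\chi_{E_j}$ on that good set $E_j\subset\widetilde{B_j}$; then $\|f_j\|_{L^{p,\kappa}_\omega(X)}\lesssim \big(\omega(\widetilde{B_j})/\omega(\widetilde{B_j})^\kappa\big)^{1/p}=\omega(\widetilde{B_j})^{(1-\kappa)/p}\approx\omega(B_j)^{(1-\kappa)/p}$ by Lemma \ref{lemcomparison} and \eqref{doubling constant of weight}. To get the stated normalization $\|f_j\|_{L^{p,\kappa}_\omega(X)}\le\bar C_0$ one rescales $f_j$ by $\omega(B_j)^{-(1-\kappa)/p}$; this rescaling is what produces the factor $[\omega(B_j)]^{\kappa-1}$ in \eqref{communorm} and \eqref{commuassociated}.

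For the lower bound \eqref{communorm}: for $x\in B_j^k=\widetilde{A_2^{k-1}B_j}$ and $y\in E_j\subset \widetilde{B_j}\subset A_2^{k-1}B_j$ (for $k\ge K_0$ large, so $\widetilde{B_j}$ sits comfortably inside $A_2^{k-1}B_j$ and $x$ lies in the associated ball), the kernel $K(x,y)$ keeps a fixed sign and $|K(x,y)|\gtrsim 1/\mu(A_2^{k-1}B_j)\approx 1/\mu(A_2^kB_j)$. Because $b(x)-b(y)$ and $K(x,y)$ can be arranged so the integrand $[b(x)-b(y)]K(x,y)f_j(y)$ does not oscillate in sign (here one uses that $b-\alpha$ has a fixed sign on $E_j$ and, after possibly passing to a further subset of $B_j^k$ where $b-\alpha$ also has a fixed sign, that the two contributions add rather than cancel — this is the standard two-sets trick), we obtain
\[
|[b,T]f_j(x)|\gtrsim \frac{1}{\mu(A_2^kB_j)}\int_{E_j}|b(y)-\alpha_{B_j}(b)|\,d\mu(y)\cdot\omega(B_j)^{-(1-\kappa)/p}\gtrsim \frac{\gamma\,\mu(B_j)}{\mu(A_2^kB_j)}\,\omega(B_j)^{-(1-\kappa)/p}
\]
for $x$ in a subset of $B_j^k$ of measure $\gtrsim \mu(B_j^k)\approx\mu(A_2^kB_j)$, plus a term coming from $|b(x)-\alpha_{B_j}(b)|$ which only helps. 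Raising to the $p$-th power, integrating $\omega$ over that subset, and using Lemma \ref{lemcomparison} to compare $\omega$ of the subset with $\omega(A_2^kB_j)$ yields exactly \eqref{communorm}.

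For the upper bound \eqref{commuassociated}: for $x\in A_2^{k+1}B_j\setminus A_2^kB_j$ and $y\in E_j\subset\widetilde{B_j}$ we have $d(x,y)\approx 2^k r_j$, hence by \eqref{size of C-Z-S-I-O} $|K(x,y)|\lesssim 1/\mu(A_2^kB_j)$; writing $b(x)-b(y)=(b(x)-b_{A_2^kB_j})+(b_{A_2^kB_j}-b(y))$ and estimating as in the proof of Theorem \ref{thm main1}(i) — John--Nirenberg (Lemma \ref{lem-jn1}), \eqref{Msim}, Lemma \ref{lemcomparison}, and the crude bound $|b_{A_2^kB_j}-\alpha_{B_j}(b)|\lesssim k$ — one gets $|[b,T]f_j(x)|\lesssim \frac{\mu(B_j)}{\mu(A_2^kB_j)}\big(|b(x)-b_{A_2^kB_j}|+k\big)\,\omega(B_j)^{-(1-\kappa)/p}$; raising to power $p$, integrating $\omega$ over $A_2^{k+1}B_j$, using Lemma \ref{bmoqq} (with weight $\omega\in A_\infty$) to control $\int_{A_2^{k+1}B_j}|b-b_{A_2^kB_j}|^p\omega\lesssim \|b\|_{\mathrm{BMO}}^p\,\omega(A_2^{k+1}B_j)\approx\omega(A_2^kB_j)$, and using $\|b\|_{\mathrm{BMO}}=1$, yields \eqref{commuassociated} (the polynomial factor $k^p$ is harmless since it will be summed against a geometric factor later).

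The main obstacle is the sign/cancellation bookkeeping in the lower bound: one must choose the four sets $E_1,E_2\subset B_j^k$ and $F_1,F_2\subset E_j$ so that on $E_i\times F_i$ the quantity $[b(x)-b(y)]K(x,y)$ has a constant sign and $|b(x)-b(y)|\ge|b(y)-\alpha_{B_j}(b)|$, exactly as in the proof of Theorem \ref{thm main1}(ii), while simultaneously keeping all the measures comparable to $\mu(A_2^kB_j)$ and $\mu(B_j)$ respectively; once the geometry is set up correctly the rest is the routine weighted estimates sketched above.
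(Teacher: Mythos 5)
Your proposal has a genuine gap in the lower bound \eqref{communorm}, and it comes from two related choices: you take $f_j$ to be a single-signed (normalized) characteristic function supported in the associated ball $\widetilde{B_j}$, and you impose no cancellation on $f_j$. First, the hypothesis \eqref{mdel} controls the oscillation of $b$ on $B_j$ only; nothing whatsoever is known about $b$ on $\widetilde{B_j}$, so the key inequality $\int_{E_j}|b-\alpha_{B_j}(b)|\,d\mu\gtrsim\gamma\,\mu(B_j)$ with $E_j\subset\widetilde{B_j}$ is unjustified (the non-degeneracy \eqref{e-assump cz ker low bdd} ``transports'' a kernel lower bound, not the oscillation of $b$). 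This could be repaired by taking $E_j\subset B_j$. Second, and more seriously, with a single-signed $f_j$ you must control the term $[b(x)-\alpha_{B_j}(b)]\,Tf_j(x)$ for $x\in B_j^k$, and your fix is to pass to a subset of $B_j^k$ where $b-\alpha_{B_j}(b)$ has a favorable fixed sign and which has measure $\gtrsim\mu(B_j^k)$. No such subset need exist: the median is taken on $B_j$, and \eqref{greater}--\eqref{lesser} say nothing about the sign distribution of $b-\alpha_{B_j}(b)$ on the far-away ball $B_j^k=\widetilde{A_2^{k-1}B_j}$, so the favorable set can have arbitrarily small $\mu$- (hence $\omega$-) measure. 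Nor can the term be discarded as an error: for your $f_j$, $|Tf_j(x)|\approx\mu(E_j)/\mu(A_2^kB_j)$ has no decay in $k$, while $\int_{B_j^k}|b-\alpha_{B_j}(b)|^p\omega\,d\mu$ grows like $k^p\,\omega(A_2^kB_j)$ by John--Nirenberg, so it is not dominated by your main term.

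The paper's construction is designed precisely to avoid this: $f_j:=[\omega(B_j)]^{(\kappa-1)/p}\bigl(\chi_{B_{j,1}}-\chi_{B_{j,2}}-a_j\chi_{B_j}\bigr)$ is supported in $B_j$, carries \emph{both} signs so that $f_j(\xi)\,(b(\xi)-\alpha_{B_j}(b))\ge0$ on $B_j$, and is corrected by $a_j\chi_{B_j}$ to have mean zero. Writing $[b,T]f_j=[b-\alpha_{B_j}(b)]Tf_j-T([b-\alpha_{B_j}(b)]f_j)$, the second term admits a pointwise lower bound on \emph{all} of $B_j^k$ (constant kernel sign from \eqref{e-assump cz ker low bdd} together with the sign property of $f_j$; no information on $b(x)$ is needed), while the first term is genuinely small: mean zero plus the kernel smoothness \eqref{smooth of C-Z-S-I-O} give $|Tf_j(x)|\lesssim A_2^{-k\sigma_0}\,\mu(B_j)/\mu(A_2^kB_j)\cdot[\omega(B_j)]^{(\kappa-1)/p}$, and John--Nirenberg with a reverse H\"older inequality give $\int_{B_j^k}|b-\alpha_{B_j}(b)|^p\omega\,d\mu\lesssim k^p\omega(A_2^{k}B_j)$, so the error carries the factor $k^pA_2^{-k\sigma_0p}$ and is absorbed by taking $K_0$ large --- this is the real reason $K_0$ must be large, which your sketch does not capture. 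The same cancellation removes the factor $k^p$ from the upper bound: \eqref{commuassociated} as stated contains no $k^p$, whereas your estimate produces one (you call it harmless, but it is not allowed by the inequality you are proving). Without the two-signed, mean-zero construction the argument does not close.
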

\begin{proof}
For each $j \in \mathbb{N},$ we define function $f_{j}$ as follows:
\[
f_{j}^{(1)}:=\chi_{B_{j, 1}}-\chi_{B_{j, 2}}:=\chi_{\left\{x \in B_{j} ; b(x)>\alpha_{B_{j}}(b)\right\}}-
\chi_{\left\{x \in B_{j} ; b(x)<\alpha_{B_{j}}(b)\right\}}, \quad f_{j}^{(2)}:=a_{j} \chi_{B_{j}}
\]
and
\[
f_{j}:=\left[\omega\left(B_{j}\right)\right]^{\frac{\kappa-1}{p}}\left(f_{j}^{(1)}-f_{j}^{(2)}\right),
\]
where $B_{j}$ is as in the assumption of Lemma \ref{lemsequence} and $a_{j} \in \mathbb{R}$ is a constant
such that

\begin{equation}\label{meanvalue}
\int_{X} f_{j}(x) d\mu(x)=0.
\end{equation}
Then, using the definition of $a_{j}$, \eqref{greater} and \eqref{lesser} we have $\left|a_{j}\right| \leq 1 / 2,
\operatorname{supp}\left(f_{j}\right) \subset B_{j}$ and, for any $x \in B_{j}$,
\begin{equation}\label{supp}
f_{j}(x)\left(b(x)-\alpha_{B_{j}}(b)\right) \geq 0.
\end{equation}
Also, since $\left|a_{j}\right| \leq 1 / 2,$ we obtain that, for any $x \in\left(B_{j, 1} \cup
B_{j, 2}\right)$,
\begin{equation}\label{sim}
\left|f_{j}(x)\right| \sim\left[\omega\left(B_{j}\right)\right]^{\frac{\kappa-1}{p}}
\end{equation}
and therefore
\begin{align*}
\left\|f_{j}\right\|_{L_{w}^{p, \kappa}\left(X\right)}
&\lesssim \sup _{B \subset X}\left\{\frac{\omega\left(B \cap B_{j}\right)}{[\omega(B)]^{\kappa}}\right\}^{\frac{1}
{p}}\left[\omega\left(B_{j}\right)\right]^{\frac{k-1}{p}} \\
& \lesssim \sup _{B \subset X}\left[\omega\left(B \cap B_{j}\right)\right]^{\frac{1-\kappa}
{p}}\left[\omega\left(B_{j}\right)\right]^{\frac{k-1}{p}} \lesssim 1 .
\end{align*}
Observe that, for any $k \in \mathbb{N},$ we get
\begin{equation}\label{inclusion}
A_2^{k-1} B_{j} \subset\left(A_2+1\right) B_{j}^{k} \subset A_2^{k+1} B_{j}
\end{equation}
hence we have
\begin{equation}\label{omegsim}
\omega\left(B_{j}^{k}\right) \sim \omega\left(A_2^{k} B_{j}\right)
\end{equation}
Observe that
\begin{equation}\label{commu}
 [b, T](f) = \left[b-\alpha_{B}(b)\right]T(f)- T([b-\alpha_{B_j}(b)]f).
\end{equation}
Using Kernel estimates, \eqref{meanvalue}, \eqref{sim} and the fact that $d\left(x, x_{j}\right) \sim d(x, \xi)$ for any $x\in B_{j}^{k}$ with integer $k \geq 2$ and $\xi \in B_{j},$ we have, for any $x \in B_{j}^{k}$,

\begin{eqnarray}
|\left[b(x)-\alpha_{B_j}(b)\right] T\left(f_{j}\right)(x)|&=&\left|b(x)-
\alpha_{B_{j}}(b)\right| \Big| \int_{B_{j}}\left[K(x, \xi)-K\left(x, x_{j}\right)\right] f_{j}(\xi) d\mu(\xi)\Big|\label{firsteq}\\
&\leq&\left|b(x)-\alpha_{B_{j}}(b)\right| \int_{B_{j}}\left|K(x, \xi)-K\left(x, x_{j}\right)\right|\left|f_{j}
(\xi)\right| d\mu(\xi) \nonumber \\
&\lesssim&\left[\omega\left(B_{j}\right)\right]^{\frac{\kappa-1}{p}}\left|b(x)-\alpha_{B_{j}}(b)\right|
\int_{B_{j}}\frac{1}{V(x,x_j)} \left(\frac{d\left(\xi, x_{j}\right)}{d\left(x, x_{j}\right)}\right)^{\sigma_0} d\mu(\xi)\nonumber \\
&\lesssim& \frac{\left[\omega\left(B_{j}\right)\right]^{\frac{\kappa-1}{p}}}{A_2^{k\sigma_0}}{\mu(B_j)\over \mu(A_2^kB_j)}\left|b(x)-
\alpha_{B_{j}}(b)\right|.\nonumber
\end{eqnarray}
As $\|b\|_{\mathrm{BMO}\left(X\right)}=1$  by John-Nirenberg
inequality(c.f.\cite{Rje}), for each $k \in \mathbb{N}$ and ball $B \subset X$,
we have
\begin{equation}\label{lowerdim}
\begin{aligned}
\int_{A_2^{k+1} B}\left|b(x)-\alpha_{B}(b)\right|^{p} d\mu(x)
&\lesssim \int_{A_2^{k+1} B}\left|b(x)-\alpha_{A_2^{k+1} B}(b)\right|^{p} d\mu(x)+\mu(A_2^{k+1}
B)\left|\alpha_{A_2^{k+1} B}(b)-\alpha_{B}(b)\right|^{p} \\
&\lesssim k^{p}\mu(A_2^{k} B),
\end{aligned}
\end{equation}
where the last inequality follows from the fact that
\[
\left|\alpha_{A_2^{k+1} B}(b)-\alpha_{B}(b)\right| \lesssim\left|\alpha_{A_2^{k+1} B}(b)-
b_{A_2^{k+1} B}\right|+\left|b_{A_2^{k+1} B}-b_{B}\right|+\left|b_{B}-\alpha_{B}(b)\right|
\lesssim k.
\]
As $\omega \in A_{p}\left(X\right),$ we observe that there exists $\epsilon \in(0, \infty)$ such that
the reverse H\"{o}lder inequality
\[
\left[\frac{1}{\mu(B)} \int_{B} \omega(x)^{1+\epsilon} d\mu( x)\right]^{\frac{1}{1+\epsilon}} \lesssim \frac{1}{\mu(B)}
\int_{B} \omega(x) d\mu(x)
\]
holds for any ball $B \subset X$. Then using the H\"{o}lder inequality, \eqref{lowerdim}, \eqref{inclusion} and \eqref{firsteq} we can obtain a positive constant $\widetilde{C}_{3}$ such that, for
any $k \in \mathbb{N}$

\begin{align}\label{commulower}
&\int_{B_{j}^{k}}\left|\left[b(x)-\alpha_{B_{j}}(b)\right] T\left(f_{j}\right)
(x)\right|^{p} \omega(x) d\mu(x)\\ \nonumber
&\lesssim \frac{\left[\omega\left(B_{j}\right)\right]^{ \kappa-1 }}{A_2^{k\sigma_0p}}{\mu(B_j)^p\over \mu(A_2^kB_j)^p} \int_{A_2^{k+1}
B_{j}}\left|b(x)-\alpha_{B_{j}}(b)\right|^{p} \omega(x) d\mu (x) \\ \nonumber
&\lesssim \frac{\left[\omega\left(B_{j}\right)\right]^{ \kappa-1 }}{A_2^{k\sigma_0p}}{\mu(B_j)^p\over \mu(A_2^kB_j)^{p-1}} \left[\frac{1}{\mu(A_2^{k+1}B_j)}\int_{A_2^{k+1}B_j}|b(x)-\alpha_{B_j}(b)|^{p(1+\epsilon)'}d\mu(x)\right]^\frac{1}{(1+\epsilon)'} \\ \nonumber
& \times \left[\frac{1}{\mu(A_2^{k+1}B_j)}\int_{A_2^{k+1}B_j}\omega(x)^{1+\epsilon}d\mu(x)\right]^\frac{1}{1+\epsilon}\\ \nonumber
&\leq \widetilde{C}_{3} \frac{k^p}{A_2^{k\sigma_0p}}{\mu(B_j)^p\over \mu(A_2^kB_j)^{p}} \left[\omega\left(B_{j}\right)\right]^{\kappa-1}
\omega\left(A_2^{k} B_{j}\right).\nonumber
\end{align}

Using Lemma \ref{lemrelcompact}, \eqref{supp}, \eqref{sim}, \eqref{Msim}, \eqref{mdel} and \eqref{lowerbound} for any $x \in B_{j}^{k}$, we get that

\[
\begin{aligned}
\left|T\left(\left[b-\alpha_{B_{j}}(b)\right] f_{j}\right)(x)\right| &=\left|\int_{B_{j, 1} \cup
B_{j, 2}} K(x, \xi)\left[b(\xi)-\alpha_{B_{j}}(b)\right] f_{j}(\xi) d \xi\right| \\
& \gtrsim \int_{B_{j, 1} \cup B_{j, 2}} \frac{\left|\left[b(\xi)-\alpha_{B_{j}}(b)\right] f_{j}(\xi)\right|}
{\mu(B(x,d(x,\xi)))} d\mu(\xi) \\
& \gtrsim \frac{1}{\mu(A_2^kB_j)}\left[\omega\left(B_{j}\right)\right]^{\frac{\kappa-1}{p}}
\int_{B_{j}}\left|b(\xi)-\alpha_{B_{j}}(b)\right| d\mu(\xi) \\
& \gtrsim \frac{\gamma \mu( B_j)}{\mu(A_2^kB_j)}\left[\omega\left(B_{j}\right)\right]^{\frac{\kappa-1}{p}}.
\end{aligned}
\]
Along with \eqref{omegsim} we deduce that there exists a positive constant $\widetilde{C}_{4}$ such that

\begin{align}\label{ctil}
\int_{B_j^{k}}\left|T\left(\left[b-\alpha_{B_{j}}(b)\right] f_{j}\right)(x)\right|^{p} \omega(x) d\mu(x) &
\geq \frac{\gamma^p \mu( B_j)^p}{\mu(A_2^kB_j)^p}\left[\omega\left(B_{j}\right)\right]^{\kappa-1} \omega\left(B_{j}^{k}\right) \\ \nonumber
& \geq \widetilde{C}_{4} \frac{\gamma^p \mu( B_j)^p}{\mu(A_2^kB_j)^p}\left[\omega\left(B_{j}\right)\right]^{\kappa-1}
\omega\left(A_2^{k} B_{j}\right). \nonumber
\end{align}
Now let us take $K_{0} \in(0, \infty)$ large enough such that, for any integer $k \geq K_{0}$
\[
\tilde{C}_{4} \frac{\gamma^{p}}{2^{p-1}}-\widetilde{C}_{3} \frac{k^{p}}{A_2^{k\sigma_0 p}} \geq
\widetilde{C}_{4} \frac{\gamma^{p}}{2^{p}}.
\]
Using this and \eqref{commu}, \eqref{commulower} and \eqref{ctil}, we get
\begin{align*}
&\int_{B_{j}^{k}} \left|[ b, T]f_{j}(x)\right|^{p} \omega(x) d\mu(x) \\
&\quad \geq \frac{1}{2^{p-1}} \int_{B_{j}^{k}}\left|T\left(\left[b-\alpha_{B_{j}}(b)\right]f_{j}\right)(x)\right|^{p} \omega(x) d\mu(x)-\int_{B_{j}^{k}}\left|\left[b(x)-\alpha_{B_{j}}(b)\right]T\left(f_{j}\right)(x)\right|^{p} \omega(x) d\mu(x) \\
&\quad \geq\left(\tilde{C}_{4} \frac{\gamma^{p}}{2^{p-1}}-\widetilde{C}_{3} \frac{k^{p}}{A_2^{k\sigma_0 p}}\right)
\frac{  \mu( B_j)^p}{\mu(A_2^kB_j)^p}\left[\omega\left(B_{j}\right)\right]^{\kappa-1}
\omega\left(A_2^{k} B_{j}\right) \\
&\quad \geq \widetilde{C}_{4} \frac{\gamma^{p}}{2^{p}} \frac{  \mu( B_j)^p}{\mu(A_2^kB_j)^p}\left[\omega\left(B_{j}\right)\right]^{\kappa-1}
\omega\left(A_2^{k} B_{j}\right).
\end{align*}
This implies \eqref{communorm}.

Also, since $\operatorname{supp}\left(f_{j}\right) \subset B_{j},$ by \eqref{sim} and \eqref{Msim} and
$\|b\|_{\mathrm{BMO}\left(X\right)}= 1$, we deduve that, for any $x \in A_2^{k+1} B_{j} \setminus A_2^{k} B_{j}$
\[
\left|T\left(\left[b-\alpha_{B_{j}}(b)\right] f_{j}\right)(x)\right|
\lesssim\left[\omega\left(B_{j}\right)\right]^{\frac{\kappa-1}{p}} \int_{B_{j}} \frac{\left|b(\xi)-\alpha_{B_{j}}
(b)\right|}{V(x, \xi) } d\mu(\xi) \lesssim\left[\omega\left(B_{j}\right)\right]^{\frac{\kappa-1}{p}}  \frac{  \mu( B_j) }{\mu(A_2^kB_j) }.
\]
Therefore, by \eqref{commulower} with $B_{j}^{k}$ replaced by $A_2^{k+1} B_{j} \setminus A_2^{k} B_{j},$
we can deduce that, for any integer $k \geq K_{0}$
\begin{align*}
&\int_{A_2^{k+1} B_{j} \setminus A_2^{k} B_{j}}
 |\left[b, T] f_{j}(x)\right|^{p} \omega(x) d\mu (x) \\
& \lesssim  \int_{A_2^{k+1} B_{j} \setminus A_2^{k} B_{j}}\left|T\left(\left[b-
\alpha_{B_{j}}(b)\right] f_{j}\right)(x)\right|^{p} \omega(x) d\mu(x) \\
&\quad +\int_{A_2^{k+1} B_{j} \setminus A_2^{k} B_{j}}\left|\left[b(x)-\alpha_{B_{j}}(b)\right]
T\left(f_{j}\right)(x)\right|^{p} \omega(x) d\mu(x) \\
&\lesssim\frac{  \mu( B_j)^p }{\mu(A_2^kB_j)^p }\left[\omega\left(B_{j}\right)\right]^{\kappa-1} \omega\left(A_2^{k}
B_{j}\right)+  \frac{k^p}{A_2^{k\sigma_0p}}{\mu(B_j)^p\over \mu(A_2^kB_j)^{p}} \left[\omega\left(B_{j}\right)\right]^{\kappa-1}
\omega\left(A_2^{k} B_{j}\right) \\
&\lesssim \frac{  \mu( B_j)^p }{\mu(A_2^kB_j)^p }\left[\omega\left(B_{j}\right)\right]^{\kappa-1} \omega\left(A_2^{k}
B_{j}\right).
\end{align*}
This completes the proof of Lemma \ref{lemsequence}.
\end{proof}

The following technical result are needed to handle the weighted estimate for  showing the necessity of the compactness of the commutators.

\begin{lem}\label{weightedestimate}
Let $ 1<p<\infty, 0<\kappa<1, \omega \in A_{p}\left(X\right), b \in\mathrm{BMO}\left(X\right), \gamma, K_{0}>0,$
$\left\{f_{j}\right\}_{j \in \mathbb{N}}$ and $\left\{B_{j}\right\}_{j \in \mathbb{N}}$ be as given in Lemma \ref{lemsequence}.
Now assume that $\left\{B_{j}\right\}_{j \in \mathbb{N}}:=\left\{B\left(x_{j}, r_{j}\right)\right\}_{j \in\mathbb{N}}$ also satisfies the following two conditions:
 \begin{enumerate}
\item[(i)] $\forall \ell, m \in \mathbb{N}$ and $\ell \neq m$

\begin{equation}\label{intersection}
A_2 C_{1} B_{\ell} \bigcap A_2 C_{1} B_{m}=\emptyset,
\end{equation}

where $C_{1}:=A_2^{K_{1}}>C_{2}:=A_2^{K_{0}}$ for some $K_{1} \in \mathbb{N}$ large enough.
\item [(ii)] $\left\{r_{j}\right\}_{j \in \mathbb{N}}$ is either non-increasing or non-decreasing in $j$, or there
exist positive constants $C_{\min }$ and $C_{\max }$ such that, for any $j \in \mathbb{N}$
\[
C_{\min } \leq r_{j} \leq C_{\max }.
\]
Then there exists a positive constant $C$ such that, for any $j, m \in \mathbb{N}$
\[
\left\|[b, T] f_{j}-[b, T] f_{j+m}\right\|_{L_{\omega}^{p, \kappa}\left(X\right)} \geq C.
\]
\end{enumerate}
\end{lem}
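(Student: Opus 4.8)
The plan is to exploit the separation condition \eqref{intersection} to turn the local lower bound \eqref{communorm} and the off-diagonal upper bound \eqref{commuassociated} from Lemma \ref{lemsequence} into a genuine lower bound on $\|[b,T]f_j-[b,T]f_{j+m}\|_{L_\omega^{p,\kappa}(X)}$. Since the Morrey norm is a supremum over balls, it suffices to exhibit, for each pair $j,j+m$, a single ball $B^\star$ on which
\[
\frac{1}{[\omega(B^\star)]^\kappa}\int_{B^\star}\big|[b,T]f_j(x)-[b,T]f_{j+m}(x)\big|^p\,\omega(x)\,d\mu(x)\gtrsim 1.
\]
The natural candidate is $B^\star=B_j^{k}=\widetilde{A_2^{k-1}B_j}$ for a suitable integer $k\ge K_0$, i.e. the ball associated with $A_2^{k-1}B_j$ in \eqref{e-assump cz ker low bdd}, on which \eqref{communorm} already gives $\int_{B_j^k}|[b,T]f_j|^p\omega\,d\mu\gtrsim \gamma^p\,\frac{\mu(B_j)^p}{\mu(A_2^kB_j)^p}[\omega(B_j)]^{\kappa-1}\omega(A_2^kB_j)$.

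First I would fix $j,m$ and pick the index $k$ so that $A_2^{k-1}B_j$ has radius comparable to $C_1 r_j$; concretely, take $k$ with $A_2^{k}\approx C_1=A_2^{K_1}$, so $k\approx K_1$, which is an absolute constant once $K_1$ is fixed, and hence the factor $\frac{\mu(B_j)^p}{\mu(A_2^kB_j)^p}\omega(A_2^kB_j)[\omega(B_j)]^{\kappa-1}/[\omega(B_j^k)]^\kappa$ is bounded below by an absolute positive constant using \eqref{omegsim}, Lemma \ref{lemcomparison} and the doubling property \eqref{doubling constant of weight}. This is where the second structural hypothesis (ii) on $\{r_j\}$ enters: it guarantees that the ratios $\mu(B_j)/\mu(A_2^kB_j)$, $\omega(B_j)/\omega(A_2^kB_j)$ stay controlled uniformly in $j$ — in the monotone cases by a telescoping/monotonicity argument, in the bounded case directly from $C_{\min}\le r_j\le C_{\max}$ together with doubling. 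Thus $\frac{1}{[\omega(B_j^k)]^\kappa}\int_{B_j^k}|[b,T]f_j|^p\omega\,d\mu\ge c_0$ for some $c_0>0$ independent of $j$.

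Next comes the key separation step: I claim $\mathrm{supp}\,f_{j+m}\subset B_{j+m}$ is disjoint from $A_2 C_1 B_j$ by \eqref{intersection}, and moreover the ball $B_j^k\subset (A_2+1)B_j^k\subset A_2^{k+1}B_j\subset A_2C_1B_j$ (using \eqref{inclusion} and $A_2^{k+1}\approx A_2 C_1$) — so on $B_j^k$ the function $[b,T]f_{j+m}$ is an \textit{off-diagonal} contribution. Writing $B_j^k\subset A_2^{k'+1}B_{j+m}\setminus A_2^{k'}B_{j+m}$ for the appropriate scale $k'$ relating $B_j^k$ to $B_{j+m}$ (again an absolute index, using hypothesis (ii) to compare radii), estimate \eqref{commuassociated} applied with $B_{j+m}$ in place of $B_j$ yields
\[
\frac{1}{[\omega(B_j^k)]^\kappa}\int_{B_j^k}\big|[b,T]f_{j+m}(x)\big|^p\omega(x)\,d\mu(x)\le \widetilde C_2\,\frac{\mu(B_{j+m})^p}{\mu(A_2^{k'}B_{j+m})^p}\cdot\frac{[\omega(B_{j+m})]^{\kappa-1}\omega(A_2^{k'}B_{j+m})}{[\omega(B_j^k)]^\kappa},
\]
and the point is that $k'$ can be made as large as we like by choosing $K_1$ (hence $C_1$) large enough relative to $K_0$, so the factor $A_2^{-k'\sigma_0 p}$-type decay hidden in \eqref{commuassociated}'s constant $\widetilde C_2$ — more precisely the gain $\mu(B_{j+m})^p/\mu(A_2^{k'}B_{j+m})^p$ which decays geometrically in $k'$ by Lemma \ref{lemcomparison} — drives this quantity below $c_0/2^{p+1}$. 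Combining via the elementary inequality $\|u-v\|^p\ge 2^{-p+1}\|u\|^p-\|v\|^p$ on $L^p(B_j^k,\omega\,d\mu)$ gives
\[
\frac{1}{[\omega(B_j^k)]^\kappa}\int_{B_j^k}\big|[b,T]f_j-[b,T]f_{j+m}\big|^p\omega\,d\mu\ \ge\ 2^{-p+1}c_0-\frac{c_0}{2^{p+1}}\ >\ 0,
\]
uniformly in $j,m$, which is the desired bound $C$.

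The main obstacle I anticipate is bookkeeping the various scale indices uniformly in $j$: the integer $k$ (locating $B_j^k$ inside $A_2 C_1 B_j$) and the integer $k'$ (locating $B_j^k$ relative to $B_{j+m}$) must both be controlled independently of $j$, and it is precisely for this that hypothesis (ii) — monotone or two-sided bounded radii — is indispensable; without it, $B_{j+m}$ could be wildly larger or smaller than $B_j$ and $k'$ could fail to be bounded below, destroying the decay. A secondary, more routine difficulty is making the weighted volume ratios like $\omega(A_2^{k'}B_{j+m})/\omega(B_{j+m})$ and $\omega(B_j^k)$ versus $\omega(A_2^kB_j)$ cancel correctly against the powers of $\omega(B_j)$, $\omega(B_{j+m})$ appearing in \eqref{communorm}–\eqref{commuassociated}; this is handled by repeated application of Lemma \ref{lemcomparison} and \eqref{doubling constant of weight}, noting that since all scales differ by a bounded number of doublings (in $j$), every such ratio is pinched between two absolute constants, possibly after also invoking Lemma \ref{lemcomparison} to transfer between $\mu$-ratios and $\omega$-ratios with the exponents $p$ and $\sigma$.
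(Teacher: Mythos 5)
Your skeleton (test on a single ball near $B_j$, lower-bound the $f_j$ term, show the $f_{j+m}$ term is small by separation, finish with the reverse triangle inequality) is the same as the paper's, but the two steps that carry the real weight do not go through as you have set them up. First, there is a circularity in your choice of scales: you put the test ball at scale $k\approx K_1$, so the lower bound you extract from \eqref{communorm} is only $c_0\gtrsim \gamma^p\bigl(\mu(B_j)/\mu(A_2^{K_1}B_j)\bigr)^{p-\sigma(1-\kappa)}\gtrsim \gamma^p A_2^{-K_1 n(p-\sigma(1-\kappa))}$, a constant that \emph{decays} as $K_1$ grows; but your smallness of the cross term is supposed to come precisely from "choosing $K_1$ large''. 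Nothing in your argument shows the cross term decays faster in $K_1$ than $c_0$ does. The paper avoids this by testing on $A_2^{K_0}B_j$ with $K_0$ fixed already in Lemma \ref{lemsequence}, so the main-term lower bound $C_3\gamma A_2^{-n(\kappa K_0+K_0-1)}$ is independent of $C_1$, and only the cross term shrinks as $C_1$ increases.

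Second, bounding $[b,T]f_{j+m}$ on the test ball by invoking \eqref{commuassociated} (with $B_{j+m}$ in place of $B_j$) is too lossy, and the two justifications you offer for absorbing the loss are not valid. (a) Estimate \eqref{commuassociated} controls the integral over the whole annulus $A_2^{k'+1}B_{j+m}\setminus A_2^{k'}B_{j+m}$, whose $\omega$-mass is comparable to $\omega(B(x_{j+m},d(x_j,x_{j+m})))$ and can be vastly larger than $\omega(B_j^k)$; after dividing by $[\omega(B_j^k)]^{\kappa}$ you are left with ratios such as $\omega(A_2^{k'}B_{j+m})/\bigl[\omega(B_{j+m})^{1-\kappa}\omega(B_j^k)^{\kappa}\bigr]$, and your claim that these are "pinched between absolute constants because all scales differ by a bounded number of doublings'' is false: $d(x_j,x_{j+m})/r_j$ is unbounded over $j,m$, so the number of doublings separating $B_j^k$ from the relevant annulus is not bounded, and Lemma \ref{lemcomparison} then only converts the mismatch into uncontrolled powers of $\mu$-ratios. (b) The asserted "geometric decay in $k'$'' of $\mu(B_{j+m})^p/\mu(A_2^{k'}B_{j+m})^p$ has no basis: doubling gives an upper bound on volume growth, not a lower one (no reverse doubling is assumed; this is exactly why the paper repeatedly passes to subsequences of scales along which the measure at least doubles), and Lemma \ref{lemcomparison} compares $\omega$ with $\mu$ on subsets, it does not give decay of $\mu$-ratios in the scale index. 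The paper instead estimates $[b,T]f_{j+m}$ \emph{pointwise on the test ball}, splitting it as $T([b-\alpha_{B_{j+m}}(b)]f_{j+m})$ plus $[b-\alpha_{B_{j+m}}(b)]\,Tf_{j+m}$, using the mean-zero property of $f_{j+m}$, kernel smoothness, John--Nirenberg (which produces the logarithm controlled by \eqref{lessratio}, and this is where the monotonicity of $\{r_j\}$ in hypothesis (ii) is actually used, not merely to "fix an index''), and then the weight bookkeeping is arranged so that everything collapses to $C'[\mu(B_{j+m})/V(x_j,x_{j+m})]^{\kappa}$, which is made smaller than half the main-term bound by taking $C_1$ large via \eqref{intersection}. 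Without such a pointwise, on-the-test-ball estimate, your cross-term bound does not close, so as written the proposal has a genuine gap.
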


\begin{proof}
Without loss of generality, we assume that $\|b\|_{\mathrm{BMO}\left(X\right)}=1$
and $\left\{r_{j}\right\}_{j \in \mathrm{N}}$ is non-increasing. Let $\left\{f_{j}\right\}_{j \in \mathrm{N}}, \widetilde{C}_{1}, \widetilde{C}_{2}$ be as in Lemma \ref{lemsequence} associated with $\left\{B_{j}\right\}_{j \in
\mathbb{N}}$.

By \eqref{communorm}, \eqref{omegsim},
Lemma \ref{lemcomparison} with $\omega \in A_{p }\left(X\right)$, we observe that, for any $j \in \mathbb{N}$,

\begin{align}\label{uppebound}
&\left[\int_{A_{2}^{K_0} B_{j}} |[b, T]f_{j}(x)|^{p} \omega(x) d\mu (x)\right]^{1 /p}\left[\omega\left(A_{2}^{K_0} B_{j}\right)\right]^{-\kappa / p}\\ \nonumber
&\geq\left[\omega\left(A_{2}^{K_0} B_{j}\right)\right]^{-\kappa / p}\left\{\int_{B_{j}^{K_{0}-1}} |\left[b,T] f_{j}(x)\right|^{p} \omega(x) d\mu(x)\right\}^{1 / p}\\ \nonumber
&\geq\left[\omega\left(A_{2}^{K_0} B_{j}\right)\right]^{-\kappa / p}\left\{\widetilde{C}_{1} \gamma^{p}{ \frac{  \mu( B_j )^p}{\mu(A_2^{K_{0}-1}B_j )^p}}\left[\omega\left(B_{j}\right)\right]^{\kappa-1} \omega\left(A_2^{K_{0}-1}B_{j}\right)\right\}^{1 / p}\\ \nonumber
&\gtrsim\left[\omega\left(A_{2}^{K_0} B_{j}\right)\right]^{-\kappa / p}\left\{\gamma^{p}
\frac{\left[\omega\left(B_{j}\right)\right]^{\kappa}}{A_{2}^{np\left(K_{0}-1\right)}}\right\}^{1 / p}\\ \nonumber
&\geq C_{3}\gamma A_{2}^{- {n (\kappa K_0+K_0-1)} }\left[\omega\left(B_{j}\right)\right]^{-\kappa / p}
 \left[\omega\left(B_{j}\right)\right]^{\kappa / p}\\ \nonumber
&=C_{3}\gamma A_{2}^{- {n (\kappa K_0+K_0-1)} } \nonumber
\end{align}
holds for a positive constant $C_{3}$ independent of $\gamma$ and $A_{2}$. We also show that, for any $j, m \in \mathbb{N}$,

\begin{equation}\label{commubound}
\left[\int_{A_{2}^{K_0} B_{j}}\left|[b,T] f_{j+m}(x)\right|^{p} \omega(x) d\mu(x)\right]^{1 / p}\left[\omega\left(A_{2}^{K_0}
B_{j}\right)\right]^{-\kappa / p} \leq \frac{1}{2} C_{3} \gamma A_{2}^{- {n (\kappa K_0+K_0-1)} }.
\end{equation}
As $\operatorname{supp}\left(f_{j+m}\right) \subset B_{j+m}$, from \eqref{Msim}, \eqref{sim}, \eqref{intersection} and
$\|b\|_{\mathrm{BMO}\left(X\right)}=1,$ it
follows that, for any $x \in A_{2}^{K_0} B_{j}  $

\begin{eqnarray*}
&\left|T\left(\left[b-\alpha_{B_{j+m}}(b)\right] f_{j+m}\right)(x)\right|
\lesssim\left[\omega\left(B_{j+m}\right)\right]^{\frac{\kappa-1}{p}} \int_{B_{j+m}}|K(x, \xi)|\left|b(x)-
\alpha_{B_{j+m}}(b)\right| d\mu(\xi)\\
&\lesssim \left[\omega\left(B_{j+m}\right)\right]^{\frac{\kappa-1}{p}} \frac{\mu(B_{j+m})}{V\left(x_j,x_{j+m}\right)}.
\end{eqnarray*}
So we have

\begin{equation}\label{tsim}
\left\{\int_{A_{2}^{K_0} B_{j}}\left|T\left(\left[b-\alpha_{B_{j+m}}(b)\right] f_{j+m}\right)(x)\right|^{p}
\omega(x) d\mu(x)\right\}^\frac{1}{p}\left[\omega\left(A_{2}^{K_0} B_{j}\right)\right]^{-\frac{\kappa}{p}}
\end{equation}

\begin{align*}
&\lesssim\left[\omega\left(B_{j+m}\right)\right]^{\frac{\kappa-1}{p}} \frac{\mu(B_{j+m})}{V\left(x_{j},
x_{j+m}\right)}\left[\omega\left(A_{2}^{K_0} B_{j}\right)\right]^{\frac{1-\kappa}{p}}.
\end{align*}
Also, using \eqref{sim} we deduce that, for any $x \in A_{2}^{K_0} B_{j}$

\begin{equation}\label{ratio}
\begin{aligned}
\left|T\left(f_{j+m}\right)(x)\right| & \leq \int_{B_{j+m}}\left|K(x, \xi)-K\left(x,
x_{j+m}\right)\right|\left|f_{j+m}(\xi)\right| d\mu(\xi) \\
& \lesssim\left[\omega\left(B_{j+m}\right)\right]^{\frac{\kappa-1}{p}}{\mu(B_{j+m})\over V(x_j,x_{j+m})} \frac{r_{j+m}^{\sigma_0}}{d\left(x_{j},
x_{j+m}\right)^{\sigma_0}}.
\end{aligned}
\end{equation}
Hence, using \eqref{ratio} and the fact $\{r_j\}_{j\in \mathbb{N}}$ is non-increasing in $j$ and from H\"{o}lders and reverse H\"{o}lder inequalities we get that
\begin{align*}
&\left\{\int_{A_{2}^{K_0} B_{j}}\left|\left[b(x)-\alpha_{B_{j+m}}(b)\right] T\left(f_{j+m}\right)
(x)\right|^{p} \omega(x) d\mu(x)\right\}^{1 / p}\left[\omega\left(A_{2}^{K_0} B_{j}\right)\right]^{-\kappa / p}\\
&\lesssim\left[\omega\left(B_{j+m}\right)\right]^{\frac{\kappa-1}{p}} {\mu(B_{j+m})\over V(x_j,x_{j+m})} \frac{r_{j+m}^{\sigma_0}}{d\left(x_{j},
x_{j+m}\right)^{\sigma_0}}\left[\omega\left(A_{2}^{K_0} B_{j}\right)\right]^{-\kappa / p} \\
&\qquad\times \left[\int_{A_{2}^{K_0} B_{j}} \left|b(x)-
\alpha_{B_{j+m}}(b)\right|^{p} \omega(x) d\mu(x)\right]^{1 / p}\\
&\lesssim\left[\omega\left(B_{j+m}\right)\right]^{\frac{\kappa-1}{p}} {\mu(B_{j+m})\over V(x_j,x_{j+m})} \frac{r_{j+m}^{\sigma_0}}{d\left(x_{j},
x_{j+m}\right)^{\sigma_0}}\left[\omega\left(A_{2}^{K_0} B_{j}\right)\right]^{\frac{1-\kappa}{p}} \\
&\qquad\times\left(\log \frac{d\left(x_{j},
x_{j+m}\right)}{r_{j+m}}+\log \frac{d\left(x_{j}, x_{j+m}\right)}{r_{j}}\right)\\
&\lesssim\left[\omega\left(B_{j+m}\right)\right]^{\frac{\kappa-1}{p}}{\mu(B_{j+m})\over V(x_j,x_{j+m})} \left[\omega\left(A_{2}^{K_0} B_{j}\right)\right]^{\frac{1-\kappa}{p}} \frac{r_{j+m}^{\sigma_0}}{d\left(x_{j},
x_{j+m}\right)^{\sigma_0}}\log \frac{d\left(x_{j}, x_{j+m}\right)}{r_{j+m}}.
\end{align*}
Observe that, for $C_{1}$ large enough, using \eqref{intersection} we know that $d\left(x_{j}, x_{j+m}\right)$ is also large enough and so we have
\begin{equation}\label{lessratio}
 \quad\left(\frac{d\left(x_{j},x_{j+m}\right)}{r_{j+m}}\right)^{-\sigma_0} \log \frac{d\left(x_{j},x_{j+m}\right)}{r_{j+m}} \lesssim 1.
\end{equation}
Using \eqref{tsim}, \eqref{ratio} and \eqref{lessratio}, we obtain that

\begin{align*}
&\left\{\int_{A_{2}^{K_0} B_{j}}\left|[b,T]\left(f_{j+m}\right)(x)\right|^{p} \omega(x) d\mu(x)\right\}^{1 /p}\left[\omega\left(A_{2}^{K_0} B_{j}\right)\right]^{-\kappa / p}\\
&\leq\left\{\int_{A_{2}^{K_0} B_{j}}\left|T\left(\left[b-\alpha_{B_{j+m}}(b)\right] f_{j+m}\right)
(x)\right|^{p} \omega(x) d\mu(x)\right\}^{1 / p}\left[\omega\left(A_{2}^{K_0} B_{j}\right)\right]^{-\kappa / p}\\
&+\left\{\int_{A_{2}^{K_0} B_{j}}\left|\left[b(x)-\alpha_{B_{j+m}}(b)\right] T\left(f_{j+m}\right)
(x)\right|^{p} \omega(x) d\mu(x)\right\}^{1 / p}\left[\omega\left(A_{2}^{K_0} B_{j}\right)\right]^{-\kappa / p}\\
&\lesssim \left[\omega\left(B_{j+m}\right)\right]^{\frac{\kappa-1}{p}}{\mu(B_{j+m})\over V(x_j,x_{j+m})} \left[\omega\left(A_{2}^{K_0} B_{j}\right)\right]^{\frac{1-\kappa}{p}} \\
&\lesssim  {\mu(B_{j+m})\over V(x_j,x_{j+m})} \left[{\omega\left(B(x_j,d(x_j,x_{j+m}))\right)\over \omega\left(B_{j+m}\right)}\right]^{\frac{1-\kappa}{p}} \\
&\leq   C'\left[{\mu(B_{j+m})\over V(x_j,x_{j+m})}\right]^{\kappa} .
\end{align*}
Note that $\lim\limits_{k\to \infty}\mu(A_2^kB_{j+m})=\infty.$ Then for  $C_{1}$ large enough, we have $$\mu(C_1B_{j+m})\geq \Big({2C'\over C_{3}\gamma A_{2}^{- {n (\kappa K_0+K_0-1)} }}\Big)^{1\over \kappa}\mu( B_{j+m}).$$
This implies that $C'\left[{\mu(B_{j+m})\over V(x_j,x_{j+m})}\right]^{\kappa}\leq C'\left[{\mu(B_{j+m})\over \mu(C_1B_{j+m})}\right]^{\kappa}\leq {1\over2}C_{3}\gamma A_{2}^{- {n (\kappa K_0+K_0-1)} }.$
This gives the proof of \eqref{commubound}. Using \eqref{lowerbound} and \eqref{commubound} we know that, for any $j, m \in \mathbb{N}$ and $C_{1}$ large enough

\begin{align*}
&\left\{\int_{A_{2}^{K_0} B_{j}}\left|[b,T]\left(f_{j}\right)(x)-[b,T]\left(f_{j+m}\right)
(x)\right|^{p} \omega(x) d\mu(x)\right\}^{1 / p}\left[\omega\left(A_{2}^{K_0} B_{j}\right)\right]^{-\kappa / p} \\
&\geq\left\{\int_{A_{2}^{K_0} B_{j}}\left|[b,T]\left(f_{j}\right)(x)\right|^{p} \omega(x) d\mu(x)\right\}^{1 /
p}\left[\omega\left(A_{2}^{K_0} B_{j}\right)\right]^{-\kappa / p} \\
&\quad-\left\{\int_{A_{2}^{K_0} B_{j}}\left|[b,T]\left(f_{j+m}\right)(x)\right|^{p} \omega(x) d\mu(g)\right\}^{1 /
p}\left[\omega\left(A_{2}^{K_0} B_{j}\right)\right]^{-\kappa / p} \geq {1\over2}C_{3}\gamma A_{2}^{- {n (\kappa K_0+K_0-1)} }.
\end{align*}
This completes the proof of Lemma 4.7.
\end{proof}

\begin{proof}[Proof of Theorem \ref{thm main2}(ii)]
 Without loss of generality, we assume that
$\|b\|_{\mathrm{BMO}\left(X\right)}= 1$. To prove $b \in \mathrm{VMO}\left(X\right)$, observe that $b \in
\mathrm{BMO}\left(X\right)$ is a real-valued function, we  will use a contradiction argument via Lemmas \ref{lemvmo}, \ref{lemsequence} and \ref{weightedestimate}.  Now note that, if $b \notin\mathrm{VMO}\left(X\right),$ then $b$ does not satisfy at least one of (i) through (iii)
of Lemma \ref{lemvmo}. We show that $[b, T]$ is not compact on $L_{\omega}^{p, \kappa}\left(X\right)$ in any of the following three cases.

\textbf{Case (i)} $b$ does not satisfy condition (i)  Lemma \ref{lemvmo}. Hence there exist $\gamma \in(0, \infty)$ and a sequence
\[
\left\{B_{j}^{(1)}\right\}_{j \in \mathbb{N}}:=\left\{B(x_{j}^{(1)}, r_{j}^{(1)})\right\}_{j \in\mathbb{N}}
\]
of balls in $X$ satisfying \eqref{mdel} and that $r_{j}^{(1)} \rightarrow 0$ as $j \rightarrow
\infty$. Let $x_0$ be a fixed point in $X$. We now consider the following two subcases.

\textbf{Subcase (i)} There exists a positive constant $M$ such that $0 \leq d(x_0,x_{j}^{(1)})<M$ for all $x_{j}^{(1)}, j \in \mathbb{N}$. That is, $x_{j}^{(1)} \in B_{0}:=B(x_0, M), \forall j \in \mathbb{N}$. Let $\left\{f_{j}\right\}_{j \in \mathbb{N}}$ be associated with the sequence $\left\{B_{j}\right\}_{j \in \mathrm{N}}, \tilde{C}_{1}$
$\tilde{C}_{2}, K_{0}$ and $C_{2}$ be as in Lemmas \ref{lemsequence} and \ref{weightedestimate}. Let $p_{0}\in(1, p)$ be such that
$\omega \in A_{p_{0}}\left(X\right)$ and $C_{4}:=A_2^{K_{2}}>C_{2}=A_2^{K_{0}}$ for
$K_{2} \in \mathbb{N}$ large enough so that

\begin{equation}\label{c5}
C_{5}:={\widetilde{C}_{1} \hat{C}_{2} \gamma^{p}\over C_{\mu}}  {A_2}^{n K_{0}(\sigma-p) }>2 {\widetilde{C}_{2}\over \hat{C}_1}
  {A_2^{K_2( p_{0}-p) }\over 1-A_2^{K_2( p_{0}-p)}} ,
\end{equation}
where $\hat{C}_{1}$ and $\hat{C}_{2}$ are as in Lemma \ref{lemcomparison}. As we know $\left|r_{j}^{(1)}\right| \rightarrow 0$ as $j \rightarrow \infty$ and $\left\{x_{j}^{(1)}\right\}_{j \in \mathbb{N}} \subset B_{0}$, we choose a subsequence $\left\{B_{j_{\ell}}^{(1)}\right\}_{\ell \in \mathbb{N}}$ of $\left\{B_{j}^{(1)}\right\}_{j \in\mathbb{N}}$ so that, for any $j \in \mathbb{N}$,

\begin{equation}\label{sizeratios}
\frac{\mu\left(B_{j_{\ell+1}}^{(1)}\right)}{\mu\left(B_{j_{\ell}}^{(1)}\right)}<\frac{1}{C_{4}^{n}} \text { and }
\omega\left(B_{j_{\ell+1}}^{(1)}\right) \leq \omega\left(B_{j_\ell}^{(1)}\right).
\end{equation}
Define for any fixed $\ell, m \in \mathbb{N}$
\[
\mathcal{J}:=C_{4} B_{j_{\ell}}^{(1)} \backslash C_{2} B_{j_{\ell}}^{(1)}, \quad
\mathcal{J}_{1}:=\mathcal{J} \backslash C_{4} B_{j_{\ell+m}}^{(1)} \quad \text { and } \quad
\mathcal{J}_{2}:=X \backslash C_{4} B_{j_{\ell+m}}^{(1)}.
\]
Observe that
\[
\mathcal{J}_{1} \subset\left[\left(C_{4} B_{j\ell}^{(1)}\right) \cap \mathcal{J}_{2}\right] \quad \text { and }
\quad \mathcal{J}_{1}=\mathcal{J} \cap \mathcal{J}_{2}.
\]
Hence we have

\begin{align}\label{setdifference}
&\left\{\int_{C_{4} B_{j_{\ell}}^{(1)}}\left|[b,T]\left(f_{j_{\ell}}\right)(x)-[b,T]\left(f_{j_{i+m}}\right)(x)\right|^{p} \omega(x) d\mu (x)\right\}^{1 /p}\\ \nonumber
&\geq\left\{\int_{\mathcal{J}_{1}}\left|[b,T]\left(f_{j_{\ell}}\right)(x)-[b,T]\left(f_{j_{\ell+m}}\right)(x)\right|^{p} \omega(x) d\mu(x)\right\}^{1 / p}\\ \nonumber
&\geq\left\{\int_{\mathcal{J}_{1}}\left|[b,T]\left(f_{j_{\ell}}\right)(x)\right|^{p} \omega(x) d\mu(x)\right\}^{1 /
p}-\left\{\int_{\mathcal{J}_{2}}\left|[b,T]\left(f_{j_{\ell+m}}\right)(x)\right|^{p} \omega(x) d\mu(x)\right\}^{1
/ p} \\ \nonumber
&=\left\{\int_{\mathcal{J} \cap \mathcal{J}_{2}}\left|[b, T]\left(f_{j_{\ell}}\right)(x)\right|^{p} \omega(x)
d\mu(x)\right\}^{1 / p}-\left\{\int_{\mathcal{J}_{2}}\left|[b,T]\left(f_{j_{\ell+m}}\right)(x)\right|^{p}
\omega(x) d\mu(x)\right\}^{1 / p} \\ \nonumber
&=: \mathrm{F}_{1}-\mathrm{F}_{2}. \nonumber
\end{align}
First we consider the term $\mathrm{F}_{1}$ and assume that $E_{j_{\ell}}:=\mathcal{J} \backslash
\mathcal{J}_{2} \neq \emptyset$. Then $E_{j_{\ell}} \subset C_{4} B_{j_{\ell+m}}^{(1)}$
by \eqref{sizeratios}  we have
\begin{equation}\label{inclusionin}
\mu\left(E_{j_{\ell}}\right) \leq C_{4}^{n}\mu\left(B_{j_{\ell+m}}^{(1)}\right)<\mu\left(B_{j_{\ell}}^{(1)}\right).
\end{equation}
Now take
\[
B_{j_{\ell,k}}^{(1)}:= \widetilde{A_2^{k-1} B_{j_{\ell}}^{(1)}},
\]
to be the ball associates with $A_2^{k-1} B_{j_{\ell}}^{(1)}$ in (\ref{e-assump cz ker low bdd}).
Now using \eqref{inclusionin}, we get
\[
\mu\left(B_{j_{\ell,k}}^{(1)}\right)= \mu\left(A_2^{k-1}B_{j_{\ell}}^{(1)}\right)> \mu(E_{j_{\ell}}).
\]
Using this, we further know that there exist finite $\left\{B_{j_{\ell,k}}^{(1)}\right\}_{k=K_{0}}^{K_{2}-2}$
intersecting $E_{j_{\ell}}$. Then, from \eqref{communorm} and Lemma \ref{lemcomparison}, we deduce that

\begin{align}\label{f1lowerbound}
\mathrm{F}_{1}^{p} &\geq \sum_{k=K_{0}, B_{j_{\ell}, k}^{(1)} \cap E_{j_{\ell}}=\emptyset}^{K_{2}-2} \int_{B_{j_{\ell,k}}^{(1)}}\left|[b,T]\left(f_{j_{\ell}}\right)(x)\right|^{p}\omega(x) d\mu(x) \\ \nonumber
& \geq \widetilde{C}_{1} \gamma^{p} \sum_{k=K_{0}, B_{j_{\ell,k}}^{(1)} \cap E_{j_{\ell}}=\emptyset}^{K_{2}-2}  {\mu\Big(B_{j_{\ell}}^{(1)}\Big)^p\over \mu\Big(A_2^{k} B_{j_{\ell}}^{(1)}\Big)^p}{ \omega\left(B_{j_{\ell}}^{(1)}\right)^{\kappa-
1} \omega\left(A_2^{k} B_{j_{\ell}}^{(1)}\right) } \\ \nonumber
&\geq\sum_{k=K_{0}, B_{j_{\ell,k}}^{(1)} \cap E_{j_{\ell}}=\emptyset}^{K_{2}-2} \widetilde{C}_{1}\hat{C}_{2} \gamma^{p}{\mu\Big(B_{j_{\ell}}^{(1)}\Big)^p\over \mu\Big(A_2^{k} B_{j_{\ell}}^{(1)}\Big)^p}{A_2^{n k\sigma}}\omega\left(B_{j_{\ell}}^{(1)}\right)^{\kappa} \\ \nonumber
& \geq{\widetilde{C}_{1} \hat{C}_{2} \gamma^{p}\over C_{\mu}}  {A_2}^{n K_{0}(\sigma-p) } \omega\left(B_{j_{\ell}}^{(1)}\right) ^{\kappa}=C_{5}\omega\left(B_{j_{\ell}}^{(1)}\right) ^{\kappa}. \nonumber
\end{align}

If $E_{j_l} := \mathcal{J}\setminus\mathcal{J}_2 = \emptyset$, the inequality is still holds true.

Observe that  $\lim\limits_{k\to \infty}\mu(A_2^kB_{j_{l+m}}^{(1)})=\infty$. Then  there exist $j_k\in \mathbb{N}$ such that
$$\mu(A_2^{j_1}B_{j_{l+m}}^{(1)}) \geq A_2^{K_2}\mu(A_2^{K_2}B_{j_{l+m}}^{(1)})\  {\rm and }\ \mu(A_2^{j_{k+1}}B_{j_{l+m}}^{(1)})\geq A_2^{K_2}\mu(A_2^{j_k}B_{j_{l+m}}^{(1)}).$$
Also, using the proof of  \eqref{commuassociated}, Lemma \ref{lemsequence}, \eqref{c5} and \eqref{sizeratios}, we obtain that

\begin{align}\label{f2lower}
\mathrm{F}_{2}^{p} &\leq \sum_{k=0}^{\infty} \int_{A_2^{j_{k+1}}B_{j_{l+m}}^{(1)}\setminus A_2^{j_{k}}B_{j_{l+m}}^{(1)}}|[b,T](f_{j_{l+m}})(x)|^p\omega(x)d\mu(x)\\ \nonumber
&\leq \widetilde{C}_{2}\sum_{k=0}^{\infty}  {\mu\Big(B_{j_{\ell+m}}^{(1)}\Big)^p\over \mu\Big(A_2^{j_{k}} B_{j_{\ell+m}}^{(1)}\Big)^p}{\left[\omega\left(B_{j_{\ell+m}}^{(1)}\right)\right]^{\kappa-1} \omega\left(A_2^{j_{k}}
B_{j_{\ell+m}}^{(1)}\right)}  \\ \nonumber
&\leq \widetilde{C}_{2}\sum_{k=0}^{\infty}  {\mu\Big(B_{j_{\ell+m}}^{(1)}\Big)^p\over A_2^{(k+1)K_2p}\mu\Big(A_2^{K_2} B_{j_{\ell+m}}^{(1)}\Big)^p}{\left[\omega\left(B_{j_{\ell+m}}^{(1)}\right)\right]^{\kappa-1} {1\over \hat{C}_1}{A_2^{(k+1)K_2p_0}\mu\Big(A_2^{K_2} B_{j_{\ell+m}}^{(1)}\Big)^{p_0}\over \mu\Big(B_{j_{\ell+m}}^{(1)}\Big)^{p_0}}\omega\left(
B_{j_{\ell+m}}^{(1)}\right)}  \\ \nonumber
& \leq {\widetilde{C}_{2}\over \hat{C}_1}  \sum_{k=0}^{\infty}
  {A_2^{(k+1)K_2( p_{0}-p) }} {\mu\Big(B_{j_{\ell+m}}^{(1)}\Big)^{p-p_0}\over \mu\Big(A_2^{K_2} B_{j_{\ell+m}}^{(1)}\Big)^{p-p_0}}\left[\omega\left(B_{j_{\ell+m}}^{(1)}\right)\right]^{\kappa }  \\ \nonumber
  & \leq {\widetilde{C}_{2}\over \hat{C}_1}
  {A_2^{K_2( p_{0}-p) }\over 1-A_2^{K_2( p_{0}-p)}}  \left[\omega\left(B_{j_{\ell+m}}^{(1)}\right)\right]^{\kappa }.  \\ \nonumber
\end{align}
By \eqref{sizeratios}, \eqref{setdifference},\eqref{f1lowerbound} and \eqref{f2lower} we deduce
\[
\begin{array}{l}
\left\{\int_{C_{4} B_{j_{\ell}}^{(1)}}\left|[b,T]\left(f_{j_{\ell}}\right)(x)-[b,T]\left(f_{j_{\ell+m}}\right)(x)\right|^{p} \omega(x) d\mu (x)\right\}^{1 / p} \\
\geq C_{5}^{1 / p}\left[\omega\left(B_{j_{\ell}}^{(1)}\right)\right]^{\kappa / p}-\left(\frac{C_{5}}{2}\right)^{1 /
p}\left[\omega\left(B_{j_{\ell}}^{(1)}\right)\right]^{\kappa / p}\gtrsim\left[\omega\left(B_{j_{\ell}}^{(1)}\right)\right]^{\kappa / p}.
\end{array}
\]
Hence we get, $\left\{[b,T] f_{j}\right\}_{j \in \mathbb{N}}$ is not relatively compact in $L_{\omega}^{p,\kappa}\left(X\right),$ which implies that $[b,T]$ is not compact on
$L_{\omega}^{p, \kappa}\left(X\right)$. So, $b$ satisfies condition (i) of Lemma \ref{lemvmo}.

\textbf{Subcase (ii)} There exists a subsequence $\left\{B_{j_{e}}^{(1)}\right\}_{\ell \in \mathbb{N}}:=\left\{B\left(x_{j_{\ell}}^{(1)}, r_{j_{\ell}}^{(1)}\right)\right\}_{\ell \in \mathbb{N}}$ of $\left\{B_{j}^{(1)}\right\}_{j\in \mathbb{N}}$
such that $d(x_0,x_{j_{\ell}}^{(1)})  \rightarrow \infty$ as $\ell \rightarrow \infty$. In this subcase, by
$\mu\left(B_{j \ell}^{(1)}\right) \rightarrow 0$ as $\ell \rightarrow \infty,$ we take a mutually disjoint
subsequence of $\left\{B_{j_{\ell}}^{(1)}\right\}_{\ell \in \mathrm{N}}$, and denote by $\left\{B_{j_{i}}^{(1)}\right\}_{\ell \in \mathrm{N}}$, satisfying \eqref{intersection} as well. This, via Lemma \ref{weightedestimate} implies
that $[b, T]$ is not compact on $L_{\omega}^{p, \kappa}\left(X\right),$ which is a
contradiction to our assumption. Hence, $b$ satisfies condition (i) of Lemma \ref{lemvmo}.

\textbf{Case (ii)} If $b$ does not satisfy condition (ii) of Lemma \ref{lemvmo}. In this case, there exist $\gamma \in$ $(0,\infty)$ and a sequence $\left\{B_{j}^{(2)}\right\}_{j \in \mathbb{N}}$ of balls in $X$ satisfying \eqref{mdel} and that $ |r_{B_{j}^{(2)}} | \rightarrow \infty$ as $j \rightarrow \infty$. We also consider the following two subcases as well.

\textbf{Subcase (i)} There exists an infinite subsequence $\left\{B_{j \ell}^{(2)}\right\}_{\ell \in \mathbb{N}}$ of $\left\{B_{j}^{(2)}\right\}_{j \in \mathbb{N}}$ and a point $x_{0} \in X$ such that, for any $\ell
\in \mathbb{N}, x_{0} \in  A_2 C_{1} B_{j_{\ell}}^{(2)}$. As $ |r_{B_{j_{\ell}}^{(2)}} |\rightarrow \infty$ as $\ell \rightarrow \infty$, it follows
that there exists a subsequence, denoted as earlier by $\left\{B_{j \ell}^{(2)}\right\}_{\ell \in \mathrm{N}},$
such that, for any $\ell \in \mathbb{N}$

\begin{equation}\label{measureratio}
\frac{\mu\left(B_{j_{e}}^{(2)}\right)}{\mu\left(B_{j_{\ell}+1}^{(2)}\right)}<\frac{1}{C_{4}^{n}}.
\end{equation}
Note that $2 A_{2} C_{1} B_{j_{\ell}}^{(2)} \subset 2 A_{2} C_{1} B_{j_{\ell+1}}^{(2)}$ for any
$j_{\ell} \in \mathbb{N}$ and hence

\begin{equation}\label{weightinequality}
\omega\left(2  A_2 C_{1} B_{j_{\ell+1}}^{(2)}\right) \geq \omega\left(2  A_2 C_{1} B_{j_{\ell}}^{(2)}\right), \quad
M\left(b, 2  A_2 C_{1} B_{j_{\ell}}\right)>\frac{\gamma}{8 A_{2}^{2} C_{1}^{2}}.
\end{equation}
Using similar method as that used in Subcase (i) of Case (i) and we redefine our sets in a reversed order. That is, for any fixed $\ell, k \in \mathbb{N},$ we let

\[
\begin{aligned}
\widetilde{\mathcal{J}} &:=2  A_2 C_{4} C_{1} B_{\ell+k}^{(2)} \backslash 2  A_2 C_{2} C_{1}
B_{\ell+k}^{(2)}, \\
\widetilde{\mathcal{J}}_{1} &:=\widetilde{\mathcal{J}} \backslash 2  A_2 C_{4} C_{1} B_{j_l}^{(2)}, \\
\widetilde{\mathcal{J}}_{2} &:= X \backslash 2  A_2 C_{4} C_{1} B_{j_{\ell}}^{(2)}.
\end{aligned}
\]
As in Case (i), by Lemma \ref{lemsequence}, \eqref{measureratio} and \eqref{weightinequality}, we deduce that the commutator $[b, T]$ is not compact on $L_{\omega}^{p, \kappa}\left(X\right) .$ This contradiction gives that $b$
satisfies condition (ii) of Lemma \ref{lemsequence}.

\textbf{Subcase (ii)} For any $z \in X$ the number of $\left\{ A_2 C_{1} B_{j}^{(2)}\right\}_{j \in
\mathrm{N}}$ containing $z$ is finite. In this subcase, for each square $B_{j_{0}}^{(2)} \in\left\{B_{j}^{(2)}\right\}_{j \in \mathrm{N}},$ the number of $\left\{ A_2 C_{1} B_{j}^{(2)}\right\}_{j \in
\mathrm{N}}$ intersecting $ A_2 C_{1} B_{j_{0}}^{(2)}$ is finite. Then we take a mutually disjoint
subsequence $\left\{B_{j_{\ell}}^{(2)}\right\}_{\ell \in \mathbb{N}}$ satisfying \eqref{mdel} and \eqref{intersection}. From
Lemma \ref{weightedestimate}, we can deduce that $[b,T]$ is not compact on $L_{\omega}^{p,
\kappa}\left(X\right)$. Thus, $b$ satisfies condition (ii) of Lemma \ref{lemvmo}.

\textbf{Case (iii)} Condition (iii) of Lemma \ref{lemvmo} does not hold for $b$. Then there exists $\gamma>0$ such that for any $r>0$, there exists $B \subset X \backslash B(x_0, r)$ with $M(b, B)>\gamma$. As in \cite{Pxq} for the $\gamma$ above, there exists a sequence $\left\{B_{j}^{(3)}\right\}_{j}$ of balls such that for any $j$,

\begin{equation}
 \quad \quad M\left(b, B_{j}^{(3)}\right)>\gamma,
\end{equation}
and for any $i \neq m$,

\begin{equation}
\gamma_{1} B_{i}^{(3)} \cap \gamma_{1} B_{m}^{(3)}=\emptyset,
\end{equation}
for sufficiently large $\gamma_{1}$
since, by Case (i) and (ii), $\left\{B_{j}^{(3)}\right\}_{j \in \mathbb{N}}$ satisfies the conditions (i) and (ii)
of Lemma \ref{lemvmo}, it follows that there exist positive constants $C_{\min }$ and $C_{\max }$ such that
\[
C_{\min } \leq r_{j} \leq C_{\max }, \quad \forall j \in \mathbb{N}.
\]
Using this and Lemma \ref{weightedestimate} we deduce that, if $[b, T]$ is compact on $L_{\omega}^{p,
\kappa}\left(X\right),$ then $b$ also satisfies condition (iii) of Lemma \ref{lemvmo}. This
completes the proof of Theorem \ref{thm main2}(ii) and hence of Theorem \ref{thm main2}.
\end{proof}

\section{Appendix: \  characterisation of ${\rm VMO}(X)$}

In this section, we provide the characterisation of VMO space on $X$
by giving the proof of Lemma \ref {lemvmo}.

\begin{proof}[Proof of Lemma \ref {lemvmo}]
In the following, for any integer $m$, we use $B^m$ to denote the ball $B(x_0, 2^m)$, where $x_0$ is a fixed point in $X$.

{\bf Necessary condition:} Assume that $f\in{\rm VMO}(X)$. If $f\in {\rm Lip}_{c}(\beta)$, then (i)-(iii) hold. In fact, by the uniform continuity, $f$ satisfies (i).  Since $f\in L^1(X)$,  $f$ satisfies (ii). By the fact that $f$ is compactly supported, $f$ satisfies (iii). If $f\in{\rm VMO}(X)\setminus {\rm Lip}_{c}(\beta)$, by definition, for any given $\varepsilon>0$, there exists $f_\varepsilon\in {\rm Lip}_{c}(\beta)$  such that $\|f-f_\varepsilon\|_{{\rm BMO}(X)}<\varepsilon$. Since $f_\varepsilon$ satisfies (i)-(iii), by the triangle inequality of ${\rm BMO}(X)$ norm, we can see (i)-(iii) hold for $f$.

{\bf Sufficient condition:}  In this proof for $j=1,2,\cdots, 8$,
 the value
$\alpha_j$ is a positive constant depending only on $n$ and $\alpha_i$ for $1\leq i<j$. Assume that $f\in {\rm BMO}(X)$ and satisfies (i)-(iii). To prove that $f\in{\rm VMO}(X)$, it suffices to show that there exist  positive constants $\alpha_1$,  $\alpha_2$
 such that, for any $\varepsilon>0$, there exists $\phi_\varepsilon\in {\rm BMO}(X)$ satisfying
\begin{align}\label{g-h}
\inf_{h\in {\rm Lip}_{c}(\beta)}\|\phi_\varepsilon-h\|_{{\rm BMO}(X)}<\alpha_1\varepsilon,
\end{align}
and
\begin{align}\label{g-f}
\|\phi_\varepsilon-f\|_{{\rm BMO}(X)}<\alpha_2\varepsilon.
\end{align}
By (i), there exist $i_\varepsilon\in\mathbb N$ such that
\begin{align}\label{ieps}
\sup\left\{M(f,B): r_B\leq 2^{-i_\varepsilon+4}\right\}<\varepsilon.
\end{align}
By (iii), there exists $j_\varepsilon\in\mathbb N$ such that
\begin{align}\label{jeps}
\sup\left\{M(f,B): B\cap B^{j_\varepsilon}=\emptyset\right\}<\varepsilon.
\end{align}

We first establish a cover of $X$.
 Observe that
\begin{align*}
B^{j_\varepsilon}=B^{-i_\varepsilon}\bigcup\left(\bigcup_{\nu=1}^{2^{j_\varepsilon+i_\varepsilon}-1}
B\left(x_0, (\nu+1)2^{-i_\varepsilon}\right)\setminus B\left(x_0, \nu2^{-i_\varepsilon}\right)
\right)=:\bigcup_{\nu=0}^{2^{j_\varepsilon+i_\varepsilon}-1}\mathcal R^{j_\varepsilon}_{\nu,-i_\varepsilon}
\end{align*}
For $m>j_\varepsilon$,
\begin{align*}\begin{split}
B^{m}\setminus B^{m-1}& =\bigcup_{\nu=0}^{2^{j_\varepsilon+i_\varepsilon-1}-1}
B\left(x_0, 2^{m-1}+ (\nu+1)2^{m-j_\varepsilon-i_\varepsilon}\right)\setminus B\left(x_0, 2^{m-1}+\nu2^{m-j_\varepsilon-i_\varepsilon}
\right)\\
&=:\bigcup_{\nu=0}^{2^{j_\varepsilon+i_\varepsilon-1}-1}\mathcal R^{m}_{\nu,m-j_\varepsilon-i_\varepsilon}.
\end{split}\end{align*}
For each $\mathcal R^{j_\varepsilon}_{\nu,-i_\varepsilon}$, $\nu=1,2,\cdots, 2^{j_\varepsilon+i_\varepsilon}-1$, let
$\tilde {\mathcal B}^{j_\varepsilon}_{\nu,-i_\varepsilon}$ be an open cover of $\mathcal R^{j_\varepsilon}_{\nu,-i_\varepsilon}$ consisting of open balls with radius $2^{-i_\varepsilon}$ and center on the sphere $S(x_0, (\nu+2^{-1})2^{-i_\varepsilon})$. Let $\mathcal B^{j_\varepsilon}_{0,-i_\varepsilon}=\{B(x_0,2^{-i_\varepsilon})\}$ and $\mathcal B^{j_\varepsilon}_{\nu,-i_\varepsilon}$ be the finite subcover of $\tilde {\mathcal B}^{j_\varepsilon}_{\nu,-i_\varepsilon}$. Similarly, for each $m>j_\varepsilon$ and $\nu=0,1,\cdots, 2^{j_\varepsilon+i_\varepsilon-1}-1$, let $\mathcal B^{m}_{\nu,m-j_\varepsilon-i_\varepsilon}$ be the finite cover of $\mathcal R^{m}_{\nu,m-j_\varepsilon-i_\varepsilon}$ consisting of open balls with radius $2^{m-j_\varepsilon-i_\varepsilon}$ and center on the sphere $S(x_0, (2^{m-1}+ (\nu+2^{-1})2^{m-j_\varepsilon-i_\varepsilon})$.

We define $B_x$ as follows.  If $x\in B^{j_\varepsilon}$, then there is $\nu\in\{0,1,\cdots, 2^{j_\varepsilon+i_\varepsilon}-1\}$ such that $x\in\mathcal R^{j_\varepsilon}_{\nu,-i_\varepsilon}$, let $B_x$ be a ball in $\mathcal B^{j_\varepsilon}_{\nu,-i_\varepsilon}$ that contains $x$. If $x\in B^m\setminus B^{m-1}$,  $m> j_\varepsilon $, then there is  $\nu\in\{ 0, 1, \cdots,  2^{j_\varepsilon+i_\varepsilon-1}-1\}$ such that $x\in \mathcal R^{m}_{\nu,m-j_\varepsilon-i_\varepsilon}$,  let $B_x$ be a ball in $\mathcal B^{m}_{\nu,m-j_\varepsilon-i_\varepsilon}$ that contains $x$.  We can see that if $\overline B_x\cap \overline B_{x'}\neq\emptyset$, then
\begin{align}\label{rbxy}
{\rm either}\ \  r_{B_x}\leq 2 ~ r_{B_{x'}}\ \  {\rm or}\ \  r_{B_{x'}}\leq 2~ r_{B_x}.
\end{align}
In fact, if $r_{B_x}>2r_{B_{x'}}$, then there is $m_0\in\mathbb N$ such that $x\in B^{m_0+2}\setminus B^{m_0+1}$ and $x'\in B^{m_0}$, thus
$$d(x,x')\geq d(x_0,x)-d(x_0,x')\geq2^{m_0+1}-2^{m_0}>2^{m_0+2-j_\varepsilon-i_\varepsilon}+2^{m_0-j_\varepsilon-i_\varepsilon}=r_{B_x}+r_{B_{x'}},$$
 which is contradict to the fact that $\overline B_x\cap \overline B_{x'}\neq\emptyset$ (Without loss of generality, here we assume that $A_0=1$ in the quasi-triangle inequality. Otherwise, we just need to take $r_{B^m}=([2A_0]+1)^m$ and make some modifications).

Now we define $\phi_\varepsilon$.
By (ii), there exists $m_\varepsilon>j_\varepsilon$ large enough such that when $r_B>2^{m_\varepsilon-i_\varepsilon-j_\varepsilon}$, we have
\begin{align}\label{lmeps}
M(f, B)<2^{n(-i_\varepsilon-j_\varepsilon-1)-1}\varepsilon.
\end{align}
Define
\begin{align*}
 \phi_\varepsilon(g)=
 \begin{cases}
f_{B_x}, &{\rm if}\ \  x\in B^{m_\varepsilon},\\
 f_{B^{m_\varepsilon}\setminus B^{m_\varepsilon-1}},  \quad &{\rm if}\ \ x\in X\setminus B^{m_\varepsilon}.
 \end{cases}
\end{align*}

We claim that there exists a positive constant $\alpha_3, \alpha_4$ such that if $\overline B_x\cap \overline B_{x'}\neq\emptyset$ or $x,x'\in X\setminus B^{m_\varepsilon-1}$, then
\begin{align}\label{gx-gy}
\left|\phi_{\varepsilon}(x)-\phi_{\varepsilon}(x') \right|<\alpha_3 \varepsilon.
\end{align}
And
if $2B_{x}\cap 2B_{x'}\neq \emptyset$, then for any $x_1\in B_{x}$, $x_2\in B_{x'}$, we have
\begin{align}\label{gx-gy1}
\left|\phi_{\varepsilon}(x_1)-\phi_{\varepsilon}(x_2) \right|<\alpha_4 \varepsilon.
\end{align}
Assume \eqref{gx-gy} and \eqref{gx-gy1} at the moment, we now continue to prove   the sufficiency  of   Lemma \ref {lemvmo}.

Now we show \eqref{g-h}. Let
$\tilde h_{\varepsilon}(x):=\phi_{\varepsilon}(x)-f_{B^{m_\varepsilon}\setminus B^{m_\varepsilon-1}}.$
By definition of $\phi_{\varepsilon}$, we can see that
$\tilde h_{\varepsilon}(x)=0$ for $x\in X\setminus B^{m_\varepsilon}$ and
$\|\tilde h_{\varepsilon}-\phi_{\varepsilon}\|_{{\rm BMO}(X)}=0.$

Observe that $\supp (\tilde h_\varepsilon)\subset B^{m_\varepsilon}$ and there exists a function $h_\varepsilon\in C_c(X)$ such that for any $x\in  X$,
$|\tilde h_{\varepsilon}(x)-h_{\varepsilon}(x)|<\varepsilon.$
Let $\eta(s)$ be an infinitely differentiable function defined on $[0,\infty)$ such that $0\leq \eta(s)\leq 1, \eta(s)=1$ for $0\leq s\leq 1$ and $\eta(s)=0$ for $s\geq2$. And let
$$\rho(x,y,t)=\Big(\int_X\eta(d(x,z)/t)d\mu(z)\Big)^{-1}\eta(d(x,z)/t)$$
and
$$h_\varepsilon^t(x)=\int_X\rho(x,y,t)h_\varepsilon(y)d\mu(y).$$
Then by \cite[Lemmas 3.15 and 3.23]{MS}, $h_\varepsilon^t(x)$ approaches to  $h_\varepsilon(x)$ uniformly for $x\in X$ as $t$ goes to $0$ and $h_\varepsilon^t\in {\rm Lip}_{c}(\beta)$ for $\beta>0$. Since
\begin{align*}
\|h_\varepsilon^t-\phi_\varepsilon\|_{{\rm BMO}(X)}
&\leq \|h_\varepsilon^t-h_\varepsilon\|_{{\rm BMO}(X)}
+\|h_\varepsilon-\tilde h_\varepsilon\|_{{\rm BMO}(X)}+
\| \tilde h_\varepsilon-\phi_\varepsilon\|_{{\rm BMO}(X)}\\
&\leq \|h_\varepsilon^t-h_\varepsilon\|_{{\rm BMO}(X)}+2\varepsilon,
\end{align*}
we can obtain \eqref{g-h} by letting $t$ go to $0$ and by taking $\alpha_1=2$.

Now we show \eqref{g-f}. To this end, we only need to prove that for any ball $B\subset X$,
$$M(f-\phi_\varepsilon, B)<\alpha_2 \varepsilon.$$
We first prove that for every $B_x$ with $x\in B^{m_\varepsilon}$,
\begin{equation}\label{Mfgx}
\int_{B_x}\left|f(x')-\phi_{\varepsilon}(x')\right|d\mu(x')\leq \alpha_5 \varepsilon \mu(B_x).
\end{equation}

In fact,
\begin{align*}
\int_{B_x}\left|f(x')-\phi_{\varepsilon}(x')\right|d\mu(x')
&=\int_{B_x\cap B^{m_\epsilon}}|f(x')-f_{B_{x'}}|d\mu(x')+ \int_{B_x\cap (X\setminus B^{m_\epsilon})}|f(x')-f_{B^{m_\varepsilon}\setminus B^{m_\varepsilon-1}}|d\mu(x').
\end{align*}

When 
 $x\in B(x_0, 2^{m_\varepsilon}-2^{m_\varepsilon-i_\varepsilon-j_\varepsilon})$, then
$B_x\subset B^{m_\epsilon}$, thus
\begin{align*}
\int_{B_x}\left|f(x')-\phi_{\varepsilon}(x')\right|d\mu(x')&=\int_{B_x}|f(x')-f_{B_{x'}}|d\mu(x')\\
&\leq\int_{B_x}|f(x')-f_{B_x}|d\mu(x')+\int_{B_x}|f_{B_x}-f_{B_{x'}}|d\mu(x')\\
&= \mu(B_x) M(f, B_x)+\int_{B_x}|f_{B_x}-f_{B_{x'}}|d\mu(x').
\end{align*}
Note that if $x'\in B_x$, then $B_x\cap B_{x'}\neq\emptyset$.
Therefore, If $B_x\cap B^{j_\varepsilon}=\emptyset$, by \eqref{jeps} and \eqref {gx-gy}, we have
$$\int_{B_x}\left|f(x')-\phi_{\varepsilon}(x')\right|d\mu(x')<(\varepsilon+\alpha_3\varepsilon)\mu(B_x).$$
If $B_x\cap B^{j_\varepsilon}\neq\emptyset$, then $r_{B_x}\leq 2^{-i_\varepsilon+1}$, then by \eqref{ieps} and \eqref{gx-gy},
$$\int_{B_x}\left|f(x')-\phi_{\varepsilon}(x')\right|d\mu(x')<(\varepsilon+\alpha_3\varepsilon)\mu(B_x).$$

When $x\in B^{m_\varepsilon}\setminus B(x_0, 2^{m_\varepsilon}-2^{m_\varepsilon-j_\varepsilon-i_\varepsilon})$, it is clear that $B_x\cap B^{j_\varepsilon}=\emptyset$, then by \eqref{jeps}, \eqref{lmeps} and \eqref{gx-gy}, we have
\begin{align*}
&\int_{B_x}\left|f(x')-\phi_{\varepsilon}(x')\right|d\mu(x')\\
&\leq\int_{B_x\cap B^{m_\epsilon}}|f(x')-f_{B_x}|d\mu(x')+ \int_{B_x\cap B^{m_\epsilon}}|f_{B_x}-f_{B_{x'}}|d\mu(x')\\
&\quad+\int_{B_x\cap (X\setminus B^{m_\epsilon})}|f(x')-f_{B^{m_\varepsilon+1}}|d\mu(x')+\int_{B_x\cap (X\setminus B^{m_\epsilon})}|f_{B^{m_\varepsilon+1}}-f_{B^{m_\varepsilon}\setminus B^{m_\varepsilon-1}}|d\mu(x')\\
&\leq \mu(B_x)M(f,B_x)+\alpha_3\varepsilon\mu(B_x)+\mu(B^{m_\varepsilon+1})M(f,B^{m_\varepsilon+1})
+{\mu(B^{m_\varepsilon+1}) \mu(B_x)\over \mu( B^{m_\varepsilon}\setminus B^{m_\varepsilon-1})}M(f,B^{m_\varepsilon+1})\\
&<(C_1\varepsilon+\alpha_3\varepsilon)\mu(B_x).
\end{align*}
Then \eqref {Mfgx} holds by taking $\alpha_5=(C_1+\alpha_3)$.

Let $B$ be an arbitrary ball in $X$, then
$M(f-\phi_\varepsilon, B)\leq M(f, B) + M(\phi_\varepsilon, B). $
If $B\subset B^{m_\varepsilon}$ and $\max\{r_{B_x}: B_x\cap B\neq\emptyset\}>8r_B$, then
\begin{align}\label{min}
\min\{r_{B_x}: B_x\cap B\neq \emptyset\}>2r_B.
\end{align}

In fact, assume that $r_{B_{\widehat{x}}}=\max\{r_{B_x}: B_x\cap B\neq\emptyset\}$ and $\widehat{x}\in B^{l_0}\setminus B^{l_0-1}$ for some $l_0\in\mathbb Z$.
Then $B\subset B^{l_0}\cap {3\over 2}B_{\widehat{x}}$.
If $l_0\leq j_\varepsilon$, then \eqref{min} holds.
If $l_0> j_\varepsilon$, then $r_{B_{\widehat{x}}}=2^{l_0-j_\varepsilon-i_\varepsilon}$, and
$$r_B<{1\over 8}r_{B_{\widehat{x}}}=2^{l_0-j_\varepsilon-i_\varepsilon-3}.$$
Since for any $x'\in{3\over 2}B_{\widehat{x}}$,
\begin{align*}
d(x_0,x')&\geq d(x_0,\widehat{x})-d(\widehat{x}, x')\geq2^{l_0-1}-{3\over 2}2^{l_0-j_\varepsilon-i_\varepsilon}
>2^{l_0-1}-2^{l_0-j_\varepsilon-i_\varepsilon+1},
\end{align*}
we have
$$\operatorname{dist}(x_0,{3\over 2}B_{\widehat{x}}):=\inf_{x'\in {3\over 2}B_{\widehat{x}}}d(x_0,x')>2^{l_0-1}-2^{l_0-j_\varepsilon-i_\varepsilon+1}.$$
Thus $B\subset B^{l_0}\setminus {3\over 2}B^{l_0-2}$. Therefore, if $B_x\cap B\neq\emptyset$, then $x\in B^{l_0}\setminus B^{l_0-2}$, which implies that
$r_{B_x}\geq2^{l_0-2-j_\varepsilon-i_\varepsilon}>2r_B.$

From \eqref{min} we can see that  if $B_{x_i}\cap B\neq \emptyset$ and $B_{x_j}\cap B\neq \emptyset$, then $2B_{x_i}\cap 2B_{x_j}\neq \emptyset$.
Then by \eqref{gx-gy1}, we can get
\begin{align*}
M(\phi_\varepsilon, B)
&\leq{1\over \mu(B)}\int_B {1\over \mu(B)}\int_B \left| \phi_\varepsilon(x)- \phi_\varepsilon(x') \right|d\mu(x')d\mu(x)\\
&= {1\over \mu(B)^2}\sum_{i:B_{x_i}\cap B\neq\emptyset}\int_{B_{x_i}\cap B}
\sum_{j:B_{x_j}\cap B\neq\emptyset}\int_{B_{x_j}\cap B}\left| \phi_\varepsilon(x)- \phi_\varepsilon(x') \right|d\mu(x')d\mu(x)\\
&<\alpha_4\varepsilon{1\over \mu(B)^2}\left(\sum_{i:B_{x_i}\cap B\neq\emptyset}\mu(B_{x_i}\cap B)\right)
\left(\sum_{i:B_{x_j}\cap B\neq\emptyset}\mu(B_{x_j}\cap B)\right)<\alpha_4\alpha_6^2\varepsilon.
\end{align*}
Moreover, if $B\cap B^{j_\epsilon}\neq\emptyset$, then by \eqref{min}, $r_B<2^{-i_\varepsilon}$, thus by \eqref{ieps}, we have
$M(f, B)<\varepsilon.$
If $B\cap B^{j_\epsilon}=\emptyset$, then by \eqref{jeps}, $M(f, B)<\varepsilon.$
Consequently,
$$M(f-\phi_\varepsilon, B)\leq M(f, B) + M(\phi_\varepsilon, B)<\left(1+\alpha_4\alpha_6^2\right)\varepsilon.$$

If $B\subset B^{m_\varepsilon}$ and $\max\{r_{B_x}: B_x\cap B\neq\emptyset\}\leq 8r_B$,
since the number of $B_x$ with $x\in B^{m_\varepsilon}$ that covers $B$ is bounded by $\alpha_7$,
 by  \eqref{Mfgx},  we have
\begin{align*}
M(f-\phi_\varepsilon, B)
&\leq {2\over \mu(B)}\int_B \left|f(x)-\phi_\varepsilon(x)\right|d\mu(x)\leq {2\over \mu(B)} \sum_{i:B_{x_i}\cap B\neq\emptyset}\int_{B_{x_i}} \left|f(x)-\phi_\varepsilon(x)\right|d\mu(x)\\
&\leq {2\over \mu(B)}\alpha_5\varepsilon \sum_{i:B_{x_i}\cap B\neq\emptyset}\mu(B_{x_i})\leq {2\over \mu(B)} \alpha_5\alpha_7\varepsilon\mu(8B)\leq C_2\alpha_5\alpha_7\varepsilon.
\end{align*}

If $B\subset X\setminus B^{m_\varepsilon-1}$, then $B\cap B^{j_\varepsilon}=\emptyset$, from \eqref{jeps} we can see $M(f, B)<\varepsilon$. By \eqref{gx-gy},
\begin{align*}
M(\phi_\varepsilon, B)
\leq{1\over \mu(B)}\int_B {1\over \mu(B)}\int_B \left|\phi_\varepsilon(x)-\phi_\varepsilon(x') \right|d\mu(x')d\mu(x)
<\alpha_3\varepsilon.
\end{align*}
Therefore,
$$M(f-\phi_\varepsilon, B)\leq M(f, B)+M(\phi_\varepsilon, B)<(1+\alpha_3)\varepsilon.$$

If $B\cap (X\setminus B^{m_\varepsilon})\neq \emptyset$ and $B\cap B^{m_\varepsilon-1}\neq \emptyset$. Let $p_{\scriptscriptstyle B}$ be the smallest integer such that $B\subset B^{p_{\scriptscriptstyle B}}$, then $p_{\scriptscriptstyle B}>m_\varepsilon$.
If $p_{\scriptscriptstyle B}=m_\varepsilon+1$, then
$r_B>{1\over 2}(2^{m_\varepsilon}-2^{m_\varepsilon-1})=2^{m_\varepsilon-2}$.
If $p_{\scriptscriptstyle B}>m_\varepsilon+1$, then
$r_B>{1\over 2}(2^{p_{\scriptscriptstyle B}-1}-2^{m_\varepsilon-1})$. Thus
$${\mu(B^{p_{\scriptscriptstyle B}})\over \mu(B)}\leq {C_3\over2}.$$
Therefore,
\begin{align*}
M(f-\phi_\varepsilon, B)
&\leq{1\over \mu(B)}\int_B \left|f(x)-\phi_\varepsilon(x)-(f-\phi_\varepsilon)_{B^{p_{\scriptscriptstyle B}}} \right|d\mu(x)
+\left| (f-\phi_\varepsilon)_{B^{p_{\scriptscriptstyle B}}}-(f-\phi_\varepsilon)_{B}\right|\\
&\leq2{\mu(B^{p_{\scriptscriptstyle B}})\over \mu(B)}{1\over \mu(B^{p_{\scriptscriptstyle B}})}\int_{B^{p_{\scriptscriptstyle B}}} \left|f(x)-\phi_\varepsilon(x)-(f-\phi_\varepsilon)_{B^{p_{\scriptscriptstyle B}}} \right|d\mu(x)\\
&\leq C_3\left(M(f, B^{p_{\scriptscriptstyle B}})+M(\phi_\varepsilon, B^{p_{\scriptscriptstyle B}}) \right)\leq C_3\left(\varepsilon+M(\phi_\varepsilon, B^{p_{\scriptscriptstyle B}}) \right),
\end{align*}
where the last inequality comes from \eqref{lmeps}.
By definition,
\begin{align*}
M(\phi_\varepsilon, B^{p_{\scriptscriptstyle B}})
&\leq {1\over \mu(B^{p_{\scriptscriptstyle B}})}\int_{B^{p_{\scriptscriptstyle B}}} \left|\phi_\varepsilon(x)-(\phi_\varepsilon)_{B^{p_{\scriptscriptstyle B}}\setminus B^{m_\varepsilon}} \right|d\mu(x)
+\left|(\phi_\varepsilon)_{B^{p_{\scriptscriptstyle B}}\setminus B^{m_\varepsilon}}- (\phi_\varepsilon)_{B^{p_{\scriptscriptstyle B}}}\right|\\
&\leq{2\over \mu(B^{p_{\scriptscriptstyle B}})}\int_{B^{p_{\scriptscriptstyle B}}} \left|\phi_\varepsilon(x)-(\phi_\varepsilon)_{B^{p_{\scriptscriptstyle B}}\setminus B^{m_\varepsilon}} \right|d\mu(x).
\end{align*}
By \eqref{jeps}, \eqref{Mfgx} and the fact that $\phi_\varepsilon(x)=f_{B^{m_\varepsilon}\setminus B^{m_\varepsilon-1}}$ if $x\in X\setminus B^{m_\varepsilon}$, we have
\begin{align*}
&\int_{B^{p_{\scriptscriptstyle B}}} \left|\phi_\varepsilon(x)-(\phi_\varepsilon)_{B^{p_{\scriptscriptstyle B}}\setminus B^{m_\varepsilon}} \right|d\mu(x)\leq \int_{B^{p_{\scriptscriptstyle B}}}{1\over \mu(B^{p_{\scriptscriptstyle B}}\setminus B^{m_\varepsilon})}\int_{B^{p_{\scriptscriptstyle B}}\setminus B^{m_\varepsilon}}
|\phi_\varepsilon(x)-\phi_\varepsilon(x')|d\mu(x')d\mu(x)\\
&= \int_{B^{m_\varepsilon}}
|\phi_\varepsilon(x)-f_{B^{m_\varepsilon}\setminus B^{m_\varepsilon-1}}|d\mu(x)\\
&\leq \int_{B^{m_\varepsilon}}
|\phi_\varepsilon(x)-f(x)|d\mu(x)+\int_{B^{m_\varepsilon}}
|f(x)-f_{B^{m_\varepsilon}}|dx+\mu(B^{m_\varepsilon})|f_{B^{m_\varepsilon}}-f_{B^{m_\varepsilon}\setminus B^{m_\varepsilon-1}}|\\
&\leq \sum_{i:B_{x_i}\cap B^{m_\varepsilon}\neq\emptyset, x_i\in B^{m_\varepsilon}}
\int_{B_{x_i}}
|\phi_\varepsilon(x)-f(x)|d\mu(x)+\left(\mu(B^{m_\varepsilon})+{\mu(B^{m_\varepsilon})^2\over \mu(B^{m_\varepsilon}\setminus B^{m_\varepsilon-1}) }\right)M(f, B^{m_\varepsilon})\\
&<\alpha_5\varepsilon\sum_{i:B_{x_i}\cap B^{m_\varepsilon}\neq\emptyset, x_i\in B^{m_\varepsilon}}\mu(B_{x_i})+3\varepsilon\mu(B^{m_\varepsilon})<(\alpha_5\alpha_8+3)\varepsilon\mu(B^{m_\varepsilon}).
\end{align*}
Therefore,
\begin{align*}
M(f-\phi_\varepsilon, B)&\leq C_3\left(\varepsilon+M(\phi_\varepsilon, B^{p_{\scriptscriptstyle B}}) \right)\leq C_3\left(\varepsilon+{2\mu(B^{m_\varepsilon})\over \mu(B^{p_{\scriptscriptstyle B}})} (\alpha_5\alpha_8+3)\varepsilon\right)\\
&<C_4\left(  \alpha_5\alpha_8+3\right)\varepsilon.
\end{align*}
Then \eqref{g-f} holds by taking $\alpha_2=\max\{1+\alpha_4\alpha_6^2, 1+\alpha_3, C_4\left(  \alpha_5\alpha_8+3\right)\}$. This finishes the proof of Lemma \ref {lemvmo}.

%

\medskip
{\it Proof of \eqref{gx-gy}:}
\medskip


 We first claim that
\begin{align}\label{tgx-gy}
\sup\left\{\left|f_{B_x}-f_{B_{x'}}\right|: x, x'\in B^{m_\varepsilon}\setminus B^{m_\varepsilon-1} \right\}<C_5\varepsilon.
\end{align}


By \eqref{lmeps}, for any $x\in B^{m_\varepsilon}\setminus B^{m_\varepsilon-1}$, we have
\begin{align*}
\left| f_{B_x}-f_{B^{m_{\varepsilon}+1}}\right|
&\leq
{\mu(B^{m_{\varepsilon}+1})\over \mu(B_x)}{1\over \mu(B^{m_{\varepsilon}+1})}\int_{B^{m_{\varepsilon}+1}}\left|f(x')-f_{B^{m_{\varepsilon}+1}}\right|d\mu(x')\\
&={\mu(B^{m_{\varepsilon}+1})\over \mu(B_x)}M(f, B^{m_\varepsilon +1})
<{C_5\over 2}\varepsilon.
\end{align*}
Similarly, for any $x'\in B^{m_\varepsilon}\setminus B^{m_\varepsilon-1}$,
$|f_{B_{x'}}-f_{B^{m_{\varepsilon}+1}}|<{C_5\over 2}\varepsilon.$
Consequently, \eqref{tgx-gy} holds.

For the case $x, x'\in X\setminus B^{m_\varepsilon-1}$, firstly, if $x, x'\in X\setminus B^{m_\varepsilon}$, then by definition
$$\left|\phi_{\varepsilon}(x)-\phi_{\varepsilon}(x') \right|=0.$$
Secondly, if $x,x'\in B^{m_\varepsilon}\setminus B^{m_\varepsilon-1}$, then by \eqref{tgx-gy}, we have
$$\left|\phi_{\varepsilon}(x)-\phi_{\varepsilon}(x') \right|<C_5\varepsilon.$$
Thirdly, without loss of generality, we may assume that $x\in B^{m_\varepsilon}\setminus B^{m_\varepsilon-1}$ and $x'\in X\setminus B^{m_\varepsilon}$, then by \eqref{lmeps}, we have
\begin{align*}
\left|\phi_{\varepsilon}(x)-\phi_{\varepsilon}(x') \right|
&=\left|f_{B_x}-f_{B^{m_\varepsilon}\setminus B^{m_\varepsilon-1}}  \right|\leq \left|f_{B_x}-f_{B^{m_\varepsilon+1}}  \right|+\left|f_{B^{m_\varepsilon+1}}-f_{B^{m_\varepsilon}\setminus B^{m_\varepsilon-1}}  \right|\\
&\leq {\mu(B^{m_\varepsilon+1})\over \mu(B_x)} M(f, B_{m_{\varepsilon+1}})+{\mu(B^{m_\varepsilon+1})\over \mu(B^{m_\varepsilon}\setminus B^{m_\varepsilon-1})}M(f, B_{m_{\varepsilon+1}})\\
&\leq \left({\mu(B^{m_\varepsilon+1})\over \mu(B_x)} +{\mu(B^{m_\varepsilon+1})\over \mu(B^{m_\varepsilon}\setminus B^{m_\varepsilon-1})}\right)M(f, B_{m_{\varepsilon+1}})\\
&<C_6\varepsilon.
\end{align*}

For the case $\overline B_x\cap \overline B_{x'}\neq\emptyset$ and  $x,x'\in B^{m_\varepsilon-1}$, we may assume $B_x\neq B_{x'}$ and $r_{B_x}\leq r_{B_{x'}}$. By \eqref{rbxy},
 $B_{x'}\subset 5B_x\subset 15B_{x'}$.
 If $x'\in B^{j_\varepsilon+1}$, then by \eqref{ieps}, we have
\begin{align*}
\left|\phi_{\varepsilon}(x)-\phi_{\varepsilon}(x') \right|
&=\left|f_{B_x}-f_{B_{x'}}  \right|\leq \left|f_{B_x}-f_{3B_{x'}}  \right|+\left|f_{B_{x'}}-f_{3B_{x'}}  \right|\\
&\leq\left( {\mu(3B_{x'})\over \mu(B_x)} +{\mu(3B_{x'})\over \mu(B_{x'})}\right)M(f, 3B_{x'}) \\
&\leq C_7\varepsilon.
\end{align*}
If $x'\notin B^{j_\varepsilon+1}$, then $3B_{x'}\cap B^{j_\varepsilon}=\emptyset$, by \eqref{jeps}, we have
$$\left|\phi_{\varepsilon}(x)-\phi_{\varepsilon}(x') \right|\leq C_7M(f, 3B_{x'})\leq C_7\varepsilon.$$
Therefore, \eqref{gx-gy} holds by taking $\alpha_3=\max\{C_5,C_6,C_7\}$.

\medskip
{\it Proof of \eqref{gx-gy1}:}
\medskip


Since $x_1\in B_{x}$, $x_2\in B_{x'}$, we have $B_{x_1}\cap B_x\neq\emptyset$ and $B_{x_2}\cap B_{x'}\neq\emptyset$, by \eqref{gx-gy},
\begin{align*}
\left|\phi_{\varepsilon}(x_1)-\phi_{\varepsilon}(x_2) \right|
&\leq \left|\phi_{\varepsilon}(x_1)-\phi_{\varepsilon}(x) \right|+\left|\phi_{\varepsilon}(x)-\phi_{\varepsilon}(x') \right|+\left|\phi_{\varepsilon}(x')-\phi_{\varepsilon}(x_2) \right|\\
&\leq 2\alpha_3\varepsilon+\left|\phi_{\varepsilon}(x)-\phi_{\varepsilon}(x') \right|.
\end{align*}
We may assume $B_x\neq B_{x'}$ and $r_{B_x}\leq r_{B_{x'}}$.
If $x,x'\in X\setminus B^{m_\varepsilon-1}$, then \eqref{gx-gy1} follows from \eqref{gx-gy}.
If $x,x'\in\ B^{m_\varepsilon-1}$,
when $ x'\in B^{j_\varepsilon+1}$, then $2^{-i_\varepsilon}\leq r_{B_x}\leq r_{B_{x'}}\leq 2^{-i_\varepsilon+1}$, thus  $B_{x'}\subset 10B_x\subset 60B_{x'}$, by \eqref{ieps}, we have
\begin{align*}
\left|\phi_{\varepsilon}(x)-\phi_{\varepsilon}(x') \right|
&\leq \left|f_{B_x}-f_{6B_{x'}}  \right|+\left|f_{B_{x'}}-f_{6B_{x'}}  \right|=\left( {\mu(6B_{x'})\over \mu(B_x)} +{\mu(6B_{x'})\over \mu(B_{x'})}\right)M(f, 6B_{x'})\\
&\leq C_9\varepsilon.
\end{align*}
When $x'\notin B^{j_\varepsilon+1}$, then there exist $\tilde m_0\in \mathbb N$ and $\tilde m_0\geq j_\varepsilon+2$ such that $x'\in B^{\tilde m_0}\setminus B^{\tilde m_0-1}$. Since $2B_{x}\cap 2B_{x'}\neq \emptyset$, we have $B_x\subset 6B_{x'}$.
Note that $6B_{x'}\cap B^{\tilde m_0-2}=\emptyset$, (in fact, for any $\tilde x\in 6B_{x'}$, $d(x_0,\tilde x)\geq d(x_0, x')-d(x',\tilde x)\geq 2^{\tilde m_0-1}-6\cdot2^{\tilde m_0-j_\varepsilon-i_\varepsilon}>2^{\tilde m_0-2}$), thus $B_x\cap B^{\tilde m_0-2}=\emptyset$ and then ${1\over 2}r_{B_{x'}}=2^{\tilde m_0-1-j_\varepsilon-i_\varepsilon}\leq r_{B_x}\leq 2^{\tilde m_0-j_\varepsilon-i_\varepsilon}=r_{B_{x'}}$. Therefore, $B_{x'}\subset 10 B_x$. Then by \eqref{jeps}, we have
$$\left|\phi_{\varepsilon}(x)-\phi_{\varepsilon}(x') \right|\leq C_9M(f, 6B_{x'})<C_9\varepsilon.$$
If $x\in B^{m_\varepsilon-1}$ and $x'\in X\setminus B^{m_\varepsilon-1}$, since $2B_{x}\cap 2B_{x'}\neq \emptyset$, by the construction of $B_x$ we can see that $x\in B^{m_\varepsilon-1}\setminus B^{m_\varepsilon-2}$ and $x'\in B^{m_\varepsilon}\setminus B^{m_\varepsilon-1}$. Thus, $B_{x'}\subset 10 B_{x}\subset 40 B_{x'}$. Then by \eqref{lmeps}, we have
$$\left|\phi_{\varepsilon}(x)-\phi_{\varepsilon}(x') \right|  <C_{10}M(f, 4B_{x'})<C_{10}\varepsilon.$$
Taking $\alpha_4=C_{9}+C_{10}+2\alpha_3$, then \eqref{gx-gy1} holds.
%
%
%
%
\end{proof}

\bigskip
{\bf Acknowledgement:} R.M. Gong is supported by the State Scholarship Fund of China (No. 201908440061). J. Li is supported by ARC DP 170101060.

\bibliographystyle{amsplain}

\medskip

Ruming Gong, School of Mathematical Sciences, Guangzhou University, Guangzhou, China.

\smallskip

{\it E-mail}: \texttt{gongruming@gzhu.edu.cn}

\vspace{0.3cm}

Ji Li, Department of Mathematics, Macquarie University, NSW, 2109, Australia.

\smallskip

{\it E-mail}: \texttt{ji.li@mq.edu.au}

\vspace{0.3cm}

Elodie Pozzi, Department of Mathematics and Statistics,
         Saint Louis University,
         220 N. Grand Blvd, 63103 St Louis MO, USA.

\smallskip

{\it E-mail}: \texttt{elodie.pozzi@slu.edu}

\vspace{0.3cm}

Manasa N. Vempati,
Department of Mathematics,
Washington University -- St. Louis,
One Brookings Drive,
St. Louis, MO USA 63130-4899

\smallskip
{\it E-mail}: \texttt{m.vempati@wustl.edu}

\end{document}